\newtheorem{theorem}{Theorem}[section]
\newtheorem{definition}[theorem]{Definition}
\numberwithin{equation}{section}
\newtheorem{lemma}[theorem]{Lemma}
\newtheorem{proposition}[theorem]{Proposition}
\newtheorem{remark}[theorem]{Remark}
\newtheorem{claim}[theorem]{Claim}
\newcommand{\Var}{{\mathrm{Var}}}
\numberwithin{equation}{section}
\def\N{\mathbb{N}}
\def\F{\mathcal{F}}
\def\B{\mathcal{B}}
\renewcommand{\phi}{\varphi}
\renewcommand{\epsilon}{\varepsilon}
\newcommand{\1}{{\text{\Large $\mathfrak 1$}}}
\newcommand{\cov}{\operatorname{Cov}}
\renewcommand{\emptyset}{\varnothing}
\newcommand{\til}{\widetilde}
\newcommand{\trel}{t_{\mathrm{rel}}}
\newcommand{\pr}[1]{\mathbb{P}\!\left(#1\right)}
\newcommand{\E}[1]{\mathbb{E}\!\left[#1\right]}
\newcommand{\estart}[2]{\mathbb{E}_{#2}\!\left[#1\right]}
\newcommand{\prstart}[2]{\mathbb{P}_{#2}\!\left(#1\right)}
\newcommand{\prcond}[3]{\mathbb{P}_{#3}\!\left(#1\;\middle\vert\;#2\right)}
\newcommand{\econd}[2]{\mathbb{E}\!\left[#1\;\middle\vert\;#2\right]}
\def\dx{\mathrm{d}x}
\def\P{\mathbb{P}}
\newcommand{\norm}[1]{\left\| #1 \right\|}
\newcommand{\tn}{|\kern-.1em|\kern-0.1em|}
\newcommand{\vrc}[2]{\mathrm{Var}\left(#1\;\middle\vert\;#2\right)}
\newcommand{\red}[1]{{\color{red}{#1}}}
\newcommand\be{\begin{equation}}
\newcommand\ee{\end{equation}}
\def\eps{\varepsilon}
\newcommand{\tv}[1]{\left\|#1\right\|_{\rm{TV}}}
\newcommand{\tmix}[1]{t_{\mathrm{mix}}(#1)}
\newcommand{\tmixx}[2]{t_{\mathrm{mix}}(#1, #2)}
\newcommand{\td}[1]{\textbf{\red{[#1]}}}
\newcommand{\tb}[1]{\textbf{\color{blue}{#1}}}
\newcommand{\trunc}[2]{\mathrm{Tr}(#1,#2)}
\newcommand{\tree}{T}
\newcommand{\G}{G_n^*}
\newcommand{\quasi}{\text{quasi tree}}
\newcommand{\rball}[1]{#1-{$R$}-\text{ball}}
\begin{document}

\title{Universality of cutoff for quasi-random graphs}
\author{Jonathan Hermon
        \thanks{
                University of British Columbia, Vancouver, CA. E-mail: {jhermon@math.ubc.ca}. }
        \and Allan Sly
        \thanks{Princeton University,  Department of  Mathematics, Princeton, NJ, USA. E-mail: {asly@princeton.edu}} 
        \and Perla Sousi
        \thanks{Statistical Laboratory, Cambridge, UK. E-mail: {p.sousi@statslab.cam.ac.uk}} }
\date{}
\maketitle

\begin{abstract}
We establish universality of cutoff for simple random walk on a class of random graphs defined as follows. Given a finite graph $G=(V,E)$ with $|V|$ even we define a random graph $ G^*=(V,E \cup E')$ obtained by picking $E'$ to be the (unordered) pairs of a random perfect matching of $V$. We show that for a sequence of such graphs $G_n$ of diverging sizes and of uniformly bounded degree, if the minimal size of a connected component of $G_n$ is at least 3 for all $n$, then the random walk on $\G$ exhibits cutoff w.h.p. This provides a simple generic operation of adding some randomness to a given graph, which results in cutoff.
\newline
\newline
\emph{Keywords and phrases.} Random graph, mixing time, cutoff, entropy, quasi trees.
\newline
MSC 2010 \emph{subject classifications.} Primary 60J10, Secondary                05C80; 05C81.
.
\end{abstract}

\section{Introduction}

This paper is motivated by the question of what types of randomness one can add to a given family of graphs so that simple random walk on the resulting graph would exhibit cutoff. In this work we show that the operation of adding the edges of a random perfect matching leads to cutoff with high probability. 
More precisely, suppose that $G=(V,E)$ is a finite graph with $|V|$ even. We define a random graph $G^*=(V,E\cup E')$, where $E'$ is a uniformly random perfect matching of $V$. While this random graph  shares some features of some classical random graph models, such as the configuration model, it differs in that it retains some of the original structure $G$, and thus it has a richer local structure than many random graph models, which are locally tree-like. Diaconis in~\cite[Section 5, Question 4]{Persiwhatwelearned} posed the problem of determining the order of the mixing time in the case when~$G$ is connected and regular of constant degree and a perfect matching (random or deterministic) is added to $G$.    

Let $X$ be a simple random walk on a graph $G$ with transition matrix $P$ and invariant distribution~$\pi$. We define the $\eps$-\emph{total variation mixing time}
\[
\tmixx{G}{\epsilon} = \min\{ t\geq 0: \max_x\tv{P^t(x,\cdot) - \pi} \leq \epsilon\},
\]
where for $\mu$ and $\nu$ two distributions  we write $\tv{\mu-\nu}=\sum_{x}|\mu(x)-\nu(x)|/2$ for their total variation distance. For a sequence of graphs $(G_n)$, we say that the corresponding sequence of random walks exhibits \emph{cutoff} if
\begin{equation}
\label{e:cutoffdef}
\forall \, \eps \in (0,1), \qquad \lim_{n \to \infty} \frac{t_{\rm{mix}}(G_{n},\eps)}{t_{\rm{mix}}(G_{n},1/4)}=1.
\end{equation}
We say that an event $A$ happens with high probability (w.h.p.)\ if $\pr{A}=1-o(1)$ as $n\to\infty$.
When the graphs $G_n$ are random graphs, we say that cutoff holds w.h.p.\ if~\eqref{e:cutoffdef} holds in distribution.  Our main result is the following:
\begin{theorem}
\label{thm:1}
Let $G_n=(V_{n},E_{n})$ be a sequence of finite graphs of even diverging sizes of maximal degree at most $\Delta$, for some constant $\Delta \in \N$. Assume that the minimal size of a connected component of $G_n$ is at least $3$ for all $n$. Then the discrete time simple random walk on $\G$ exhibits cutoff w.h.p. Moreover, for all $\eps \in (0,1/2)$  there exists a  constant $C(\Delta,\eps)>0$ so that w.h.p.
\begin{equation}
\label{e:window}
\tmixx{\G}{\epsilon}-t_{\rm{mix}}(\G,1-\eps) \le C(\Delta,\eps) \sqrt{\log |V_n|}. 
\end{equation}
Finally w.h.p.\ $t_{\rm{mix}}(\G,1/4)\asymp \log |V_n| $. 
\end{theorem}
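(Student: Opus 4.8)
The plan is to establish cutoff by the standard entropic route: show that the walk, viewed from a typical starting point, behaves like a walk on a random quasi-tree (the $R$-ball around a vertex in $\G$ is w.h.p.\ a "quasi tree" — a tree-like graph where the original bounded-degree pieces of $G$ are glued along the random matching edges), and that on such a structure the walk has a positive speed and a positive entropy (Avez) rate with respect to a suitable metric. Concretely, I would first show that for fixed $R$, the $R$-ball around a uniformly chosen vertex of $\G$ converges locally to an infinite random quasi-tree $\mathcal{T}$, and couple the walk on $\G$ with a walk on $\mathcal{T}$ up to time $t = \Theta(\log|V_n|)$, since the walk explores only a vanishing fraction of the vertices before the matching structure "looks" like $\mathcal{T}$. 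The key dynamical quantities are the speed $\mathfrak{s}>0$ and the entropy rate $\mathfrak{h}>0$ of the random walk on $\mathcal{T}$; both are positive and deterministic by an ergodic/subadditivity argument, using that each connected component of $G$ has size $\ge 3$ (this ensures the quasi-tree genuinely branches and is not a line, so the walk is transient with linear escape).

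Given these, I would run the two-sided bound. For the \emph{upper bound} on $\tmixx{\G}{\epsilon}$: after $t = (1+\delta)\mathfrak{h}^{-1}\log|V_n|$ steps, the law of $X_t$ started from $x$ has entropy close to $t\mathfrak{h} > \log|V_n|$, so it is spread over essentially all of $V_n$; combined with an $L^2$/spectral-profile bound (the paper's notation $\spec$, $\hit$ suggests such tools are available) or a direct second-moment computation on the quasi-tree, this forces $\|P^t(x,\cdot)-\pi\|_{\mathrm{TV}}$ small. For the \emph{lower bound}: after $t=(1-\delta)\mathfrak{h}^{-1}\log|V_n|$ steps the walk is confined (w.h.p.\ over the walk) to a set of size $e^{t\mathfrak{h}(1+o(1))} = |V_n|^{1-\delta+o(1)} = o(|V_n|)$, so the walk is far from stationarity. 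The distinguishing statistic can be taken to be membership in this "entropically typical" set, or, more robustly, the distance traveled: the walk is at graph-distance $\approx \mathfrak{s}t$ from its start, while a stationary point is typically at distance $\approx \mathfrak{s}\cdot\tmixx{\G}{1/4}$, and concentration of these distances (variance $O(t)$, hence fluctuations $O(\sqrt{\log|V_n|})$) yields both cutoff and the $O(\sqrt{\log|V_n|})$ window in~\eqref{e:window}. That $\tmixx{\G}{1/4}\asymp\log|V_n|$ then follows: the upper bound is the matching argument above, and the lower bound is immediate since $\G$ has bounded degree, so $|B(x,r)|\le \Delta^{r}$ and the walk needs $r\gtrsim\log|V_n|/\log\Delta$ steps just to have a chance of reaching a constant fraction of the vertices.

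The window bound~\eqref{e:window} requires slightly more care than plain cutoff: I would obtain it from a concentration estimate for the relevant additive functional along the walk on the quasi-tree — either $\log(1/P^t(X_0,X_t))$ (pointwise entropy) or the Busemann-type distance — showing Gaussian-type fluctuations of order $\sqrt{t}=\Theta(\sqrt{\log|V_n|})$ around a linear drift. This is typically done via a martingale decomposition along the sequence of fresh matching edges discovered by the walk (a regeneration structure on the quasi-tree), with Azuma/Burkholder giving the $\sqrt{t}$ scale; one must also check the discrepancy between the finite graph $\G$ and the infinite model $\mathcal{T}$ contributes only lower-order error over the relevant time scale.

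\textbf{Main obstacle.} I expect the hard part to be twofold. First, establishing that the speed and entropy are \emph{strictly positive and well-defined} for the random walk on the random quasi-tree, uniformly over the randomness of $G$ (only assuming bounded degree and components of size $\ge 3$): the quasi-tree is not a unimodular Galton–Watson tree, its "offspring" blocks are arbitrary bounded graphs, so one needs a robust argument (e.g.\ an explicit Lyapunov/flow function, or a renewal structure at the matching edges) that the walk is transient with linear escape and does not get trapped in the deterministic pieces. Second, the quantitative $\sqrt{\log|V_n|}$ window demands genuine concentration — a law of large numbers for speed/entropy is not enough — so one must build the regeneration/martingale structure carefully and control the dependence introduced by the walk revisiting deterministic blocks of $G$ before escaping them. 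Transferring these infinite-volume statements back to $\G$ with only $o(1)$ total-variation error over time $\Theta(\log|V_n|)$ is the technical glue that ties everything together.
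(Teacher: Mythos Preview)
Your proposal is essentially the paper's approach: couple the walk on $\G$ to a walk on a quasi-tree, establish speed $\nu>0$ and entropy $\mathfrak{h}>0$ via a regeneration structure at the matching (long-range) edges, derive $O(\sqrt{t})$ concentration for the relevant additive functional, and read off cutoff at time $\asymp \log n$ with a $\sqrt{\log n}$ window. You also correctly identify the two hard parts.

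Two implementation points are glossed over and each is essential. First, your upper bound sketch (``entropy exceeds $\log|V_n|$, so spread'') is not how the paper closes: the paper stops the coupling \emph{just before} the entropic time $t=\tfrac{\log n}{\nu\mathfrak{h}}-B\sqrt{\log n}$, shows the law of $X_t$ has $L^\infty$ density at most $n^{-1}\exp(\Gamma\sqrt{\log n})$ relative to $\pi$, and then uses the Poincar\'e inequality to contract the remaining $L^2$ distance in $O(\sqrt{\log n})$ further steps. This last step requires a separate, non-trivial result (Proposition~\ref{pro:expander}): the absolute spectral gap of $\G$ is bounded below w.h.p. That proposition is proved by comparison with the configuration model, and is not a consequence of the entropic machinery. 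Second, your coupling claim (``the walk explores only a vanishing fraction of vertices'') hides the main combinatorial difficulty: to compare $\G$ with the quasi-tree you must reveal not just the walk's trajectory but enough of the surrounding branching structure to certify the entropy concentration, and naively this exploration has size $n^{1-o(1)}$. The paper handles this with an explicit \emph{truncation criterion} $\trunc{e}{A}$ on long-range edges (stop exploring past edges with $\til W_T(e)>\log n - A\sqrt{\log n}$), which deterministically caps each exploration at $n\exp(-A\sqrt{\log n}/3)$ vertices while, by the entropy concentration, the walk is unlikely to cross any truncated edge by time $t$. Without this device the coupling argument does not go through; you should expect to need something equivalent.
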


We recall that for an irreducible reversible Markov chain on a finite state space with transition matrix $P$ the \emph{absolute spectral gap} $\gamma$ is defined as
\[
\gamma= 1-\max\{|\lambda|: \lambda \text{ is an eigenvalue of } P \text{ with } \lambda\neq 1\}.
\]

\begin{proposition}\label{pro:expander}
In the setup of Theorem~\ref{thm:1} there exists $\alpha= \alpha(\Delta)>0$ such that if $\gamma_n$ denotes the absolute spectral gap of simple random walk on $\G$, then  w.h.p. 
\[
\gamma_n\geq \alpha.
\]
\end{proposition}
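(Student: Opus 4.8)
The plan is to reduce the uniform spectral gap bound to two combinatorial facts about $\G$ that hold with high probability --- a lower bound on its edge-expansion (Cheeger) constant and a lower bound on a ``non-bipartiteness'' constant --- and to prove both by a union bound over subsets of $V_n$ that exploits the randomness of the matching $E'$ together with the hypothesis that every component of $G_n$ has at least three vertices. Concretely: since $G_n$ has maximal degree $\le\Delta$, the graph $\G$ has maximal degree $\le\Delta+1$, so its stationary distribution $\pi$ is comparable to uniform up to a factor $\Delta+1$. Writing $\gamma_n=\min\{1-\lambda_2,\ 1+\lambda_{\min}\}$ with $\lambda_2\ge\lambda_{\min}$ the second-largest and smallest eigenvalues of $P$, it suffices, by the Cheeger inequality and its non-bipartite (``dual'') analogue for bounded-degree graphs, to exhibit $c=c(\Delta)>0$ so that w.h.p.
\[
\min_{1\le |S|\le |V_n|/2}\frac{e_{\G}(S,V_n\setminus S)}{|S|}\ \ge\ c,
\qquad
\min_{V_n=A\sqcup B}\frac{m_{\G}(A,B)}{|V_n|}\ \ge\ c,
\]
where $m_{\G}(A,B)$ is the number of edges of $\G$ with both endpoints in $A$ or both in $B$. (The first bound in particular forces $\G$ to be connected w.h.p.) The two bounds are proved by the same scheme, so I describe the first.

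Put $n=|V_n|$. For $S\subseteq V_n$ let $M_S$ be the number of matching edges inside $S$, so the number of matching edges leaving $S$ is $|S|-2M_S$ and hence $e_{\G}(S,V_n\setminus S)=e_{G_n}(S,V_n\setminus S)+|S|-2M_S$. If $S$ is \emph{bad}, i.e.\ $e_{\G}(S,V_n\setminus S)<c|S|$, then (using $M_S\le |S|/2$) both $t:=e_{G_n}(S,V_n\setminus S)<c|S|$ and $M_S\ge (1-c)|S|/2$. The second event is controlled by
\[
\pr{M_S\ge m}\ \le\ \binom{|S|}{2m}(2m-1)!!\prod_{i=0}^{m-1}\frac{1}{n-1-2i},
\]
whose exponential rate at $m=(1-c)|S|/2$ is $\tfrac12\log(\cdot)$, and which for $|S|$ of smaller order is polynomially small in $n$. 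The first condition is structural: a set $S$ with $|S|=s$ and $G_n$-edge-boundary $t$ is a union of components of $G_n$ with an edge cut of size $\le t$ removed, so, using that components of $G_n$ have size $\ge 3$ and that $G_n$ has maximal degree $\le\Delta$,
\[
\#\{S:\ |S|=s,\ e_{G_n}(S,V_n\setminus S)\le t\}\ \le\ \mathrm{poly}(n)\,\binom{n/3}{s/3}\,(2e\Delta n/t)^{t},
\]
whose dominant factor $\binom{n/3}{s/3}$ has exponential rate $\tfrac13\log(\cdot)$ --- this is exactly where size $\ge 3$ enters. Multiplying these bounds, summing over $t<cs$ and then over $s$ (treating $s=O(1)$ by crude polynomial estimates --- note that for $c\le\tfrac13$ there are no bad sets of size $\le 3$ --- and larger $s$ via the exponential bounds), the gap between the growth rate $\tfrac13\log$ of the structural count and the decay rate $\tfrac12\log$ of $\pr{M_S\ge(1-c)|S|/2}$ makes the total $o(1)$, provided $c=c(\Delta)$ is small enough that the correction $(2e\Delta n/t)^t$ (of rate $\approx (s/n)\,c\log(1/c)$ when $t\approx cs$) does not erode the margin.

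For the non-bipartiteness bound, fix a partition $V_n=A\sqcup B$. The number of matching edges with endpoints in both parts is at most $\min(|A|,|B|)\le n/2$, and $m_{\G}(A,B)$ equals the number of monochromatic $G_n$-edges plus $n/2$ minus that number of bichromatic matching edges; hence if $m_{\G}(A,B)<cn$ then the partition has fewer than $cn$ monochromatic $G_n$-edges, so (again using $\Delta$ and components of size $\ge 3$) it differs in at most $O(cn)$ vertices from a partition proper on some union of bipartite components of $G_n$, of which there are at most $2^{n/3}$; and for each fixed such partition the probability that the matching has more than $(1/2-c)n$ bichromatic edges is $\le 2^{-(1/2-o_c(1))n}$. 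Since $\tfrac13<\tfrac12$, a union bound closes this case as well.

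The main obstacle is the structural count and the optimization it feeds into: one must show that the family of sets nearly closed in $G_n$ (resp.\ partitions nearly respected by $G_n$) has exponential growth rate $\tfrac13\log$, strictly below the $\tfrac12\log$ decay rate of the matching event, and that the $\Delta$-dependent corrections coming from the $\le t$ ``cut'' components (resp.\ recoloured vertices) stay uniformly small across all scales of $|S|$ --- in particular in the awkward regime where $|S|$ is polynomially smaller than $n$, where both the structural and probabilistic exponents are $\Theta((|S|/n)\log(n/|S|))$. This is what fixes the size of $c(\Delta)$. Tellingly, were components of size $2$ permitted the structural rate would rise to $\tfrac12\log$ and the argument would break --- as it must, since then $\G$ could be a disjoint union of cycles and have vanishing spectral gap.
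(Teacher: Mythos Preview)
Your route differs from the paper's on both eigenvalues. For $1-\lambda_2$ the paper does \emph{not} run a first-moment union bound on Cheeger expansion: it partitions $V_n$ into connected pieces $V_1,\ldots,V_M$ of size between $3$ and $9\Delta$ (Lemma~\ref{l:partition}), applies the Madras--Randall spectral decomposition (Theorem~\ref{thm:madrasrandall}), and compares the projected chain on $[M]$ to simple random walk on the configuration model with degree sequence $(|V_i|)_i$, for which a uniform spectral gap is already known. This bypasses the structural count you flag as the main obstacle. For $1+\lambda_{\min}$ the paper is closer to you in spirit (same $\tfrac13$ versus $\tfrac12$ competition, same matching tail as in Lemma~\ref{lem:probforamatching}), but it first uses the now-available Cheeger bound to reduce to $|S|\ge(1-\delta)n$ via $\zeta(S)\ge\Phi(S)$, and then counts bad partitions $(A,B)$ using the \emph{same} bounded-size partition $(V_i)$: all but $O(\delta n)$ of the $V_i$ must carry their unique extremal bipartition $(U_i(1),U_i(2))$, giving at most $2^{M}\cdot 2^{C'(\delta)n}\le 2^{n/3+C'(\delta)n}$ choices.

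There is a genuine gap in your non-bipartiteness count. The claim that a partition with fewer than $cn$ monochromatic $G_n$-edges ``differs in at most $O(cn)$ vertices from a partition proper on some union of bipartite components'' is false: on a single path of length $n$, the partition that agrees with the proper $2$-colouring on the left half and with its flip on the right half has one monochromatic edge yet differs from \emph{both} proper colourings in $\Theta(n)$ vertices. The count $2^{n/3+o(n)}$ you want is nevertheless true, and the paper's device --- working with bounded-size connected pieces rather than with the (possibly large) components of $G_n$ --- is exactly what makes it rigorous: each piece has bounded size, hence a unique minimising bipartition and only $O_\Delta(1)$ alternatives, and few monochromatic edges forces agreement with the minimiser on all but $O(cn)$ pieces. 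Your edge-expansion structural count $\binom{n/3}{s/3}(2e\Delta n/t)^t$ is more plausible but likewise unjustified as stated; the same bounded-piece partition would organise that count too, though the paper simply avoids it via the configuration-model reduction. In short, your $\tfrac13$ vs.\ $\tfrac12$ heuristic is right, but the missing ingredient is Lemma~\ref{l:partition}.
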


Proposition~\ref{pro:expander} immediately implies the last assertion of Theorem~\ref{thm:1} using the Poincar\'e inequality.  
 It turns out that the mixing time has an entropic description which is given  in terms of the random walk on some auxiliary infinite random graph which we refer to as the corresponding ``\emph{quasi tree}"~$\tree_n$ defined as follows (see Figure \ref{fig:quasitree}).\footnote{When the sequence of graphs $(G_n)_{n \in \N }$ has a Benjamini-Schramm limit $G$, one can define the entropic time using~$G$. This is discussed in Remark \ref{rem:BSlimit}.} Pick a random ball of $G_n$ of radius $R=R_{n}:=\lceil C \log \log |V_n| \rceil$. We refer to the centre of this ball as the \emph{root}. Each vertex $v$ in the ball, other than its centre, is connected by an edge to the centre of a random ball $B_{v}$ of radius $R$ (in $G_n$). The balls $B_v$ are picked independently. We refer to each such ball as an $R$\textbf{-ball}. Repeat this operation inductively, where at each stage  the centres of the balls do not have an edge emanating from them, and the rest of the vertices in each ball have a single edge emanating from them. Call the resulting graph $T_n$. The cutoff time is then given by the time at which the entropy of simple random walk on $T_n$ is $\log |V_n|$. The fluctuations of $\tmix{\epsilon}$ around this time are given by \eqref{e:window} up to a constant factor. Indeed Remark 1.6 in \cite{HLP} implies that the cutoff window is $\Omega(\sqrt{t_{\rm{mix}}(\G,1/4)/\Delta}) $. 

\begin{figure}[!h]
\begin{center}
\includegraphics[scale=0.6]{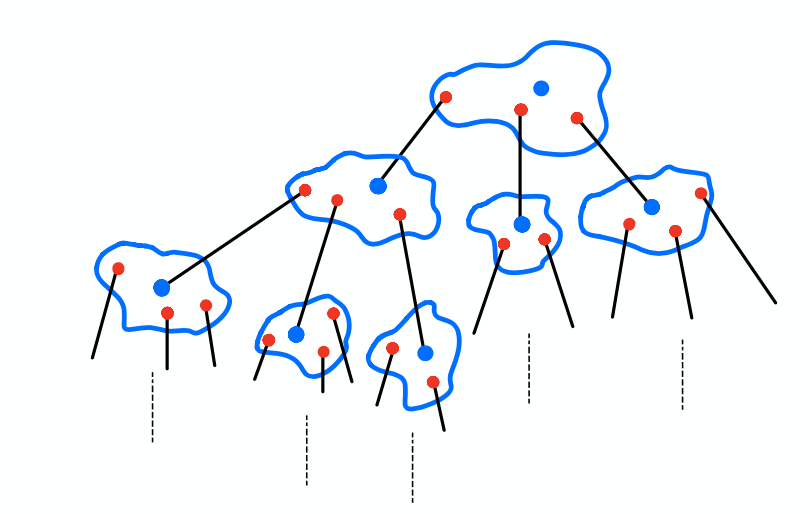}
\end{center}
\caption{\label{fig:quasitree} 
An illustration of a quasi tree. The blue vertices are the centers of the corresponding $R$-balls. The internal adjacency structure inside each ball is identical to the one in the corresponding ball in $G_n$.}
\end{figure} 
 
 \begin{remark}
        \emph{The assumption in Theorem~\ref{thm:1} that $|V_n|$ is even can be dropped by leaving one vertex unmatched when $|V_n|$ is odd. Our analysis can easily be extended to the graph obtained by ``super-positioning" a configuration model of bounded degree on $G_n$, obtained by adding to each vertex~$i$, $d_i \in [1,\Delta']$ half-edges (with $\sum_i d_i$ even) and then adding to $G_n$ the edges corresponding to a random perfect matching of the half-edges. }
 \end{remark}
 \begin{remark}
        \label{r:degree}
        \emph{An inspection of the proofs of Theorem~\ref{thm:1} and Proposition~\ref{pro:expander} reveals that for the lazy or the continuous-time versions of the walk, we can allow the maximal degree to be $O((\log n)^c)$ for some sufficiently small $c \in (0,1)$. In this case, the right hand side of~\eqref{e:window} would become larger, but would still remain $o(\log |V_n|)$.
        We note that some condition on the maximal degree is needed. The simplest example is obtained by taking $G_n$ to be the Cartesian product $K_{n/2} \times K_2$ where $K_m$ is a clique on $m$ vertices.  In fact, one can construct a family of examples of degree as small as $\Theta(\frac{\log n}{\log \log n})$. This construction is given in \S\ref{s:example}.}
 \end{remark}
Our lower bound on $t_{\rm{mix}}(\G,1-\eps)$, which as discussed above can be expressed using an entropic time, in fact holds w.h.p.\ simultaneously for all starting points (see Remark \ref{r:beststartingpoint}).

As we note below, cutoff for random walk on random graphs at an entropic time defined w.r.t.\ some auxiliary random walk is a  paradigm that emerged in the last few years. An interesting feature of our result is that the  random graph is not tree-like and the auxiliary random walk is not defined on a tree. The only other such cases that we are aware of in the literature are \cite{Cayley1,Cayley2}, which use completely different (group theoretic) methods.

 \subsection{Related work - cutoff at the entropic time for random instances paradigm}

We now put our results into a broader context.
A recurring theme in the study of the cutoff phenomenon is that random instances often exhibit cutoff. This was already observed by Aldous and Diaconis  in their seminal '86 paper \cite{AD:shuff-stop} where they coined the term cutoff.  
In this setup, a family of transition matrices chosen from a certain family of distributions is shown to give rise to a sequence of Markov chains which exhibits cutoff w.h.p. In recent years this has been verified for random walk on various natural random graphs. Lubetzky and Sly established the cutoff phenomenon for random walk on random regular graphs \cite{LS:cutoff-random-regular}. Together with Berestycki and Peres \cite{BLPS} they established cutoff for a typical starting point at an entropic time\footnote{With respect to a random walk on the corresponding Benjamini-Schramm limit, which is the size-biased version of a Poisson Galton-Watson tree. } for the random walk on the giant component of an Erd\H os-R\'enyi graph as well as on a random graph with a given degree sequence, satisfying some (very) mild assumptions on the degrees. Cutoff for the non-backtracking random walk on a random graph with a given degree sequence was established independently by Ben-Hamou and Salez~\cite{BhS:cutoff-nonbacktracking}. Ben-Hamou, Lubetzky and Peres verified cutoff at the same entropic time also for a worst-case initial point for the configuration model in \cite{BhLP:comparing-mixing}. Ben-Hamou in~\cite{Anna} also established cutoff for the non-backtracking random walk on a variant of the configuration model which incorporates a community structure.

A few other notable examples, where cutoff has been proved at an entropic time include random walks on a certain generalisation of Ramanujan graphs \cite{BL}. Cutoff for all Ramanujan graphs was proven earlier by Lubetzky and Peres in~\cite{LP:ramanujan}, and on random lifts \cite{BL,conchon2019cutoff} by Bordenave and Lacoin, and by Conchon--Kerjan. Two additional remarkable such examples, due to Bordenave, Caputo and Salez,  where the Markov chain is non-reversible and the stationary distribution is not well-understood, are random walks on random digraphs \cite{BCS:rw-sparse-digraphs} and a large family of sparse Markov chains \cite{BCS} obtained by permuting the entries of each row of the transition matrix independently. A similar model, which is even closer to our model, is studied by Chatterjee and Diaconis \cite{CD}. They showed that under mild assumptions on a doubly-stochastic transition matrix $P$, if $\Pi$ is a random permutation matrix, then $P \Pi$ w.h.p.\ has mixing time which is logarithmic in the size of the state space. See Bordenave et al.\ \cite{bordenave2018spectral} for a related work about the second largest eigenvalue in absolute value of  $\Pi P$. We note that while the last two examples bear some resemblance to our model, they differ in that the Markov chains there are locally tree-like. Moreover, the approach we employ to prove Proposition \ref{pro:expander}, involving a comparison with the configuration model, is different to the one in \cite{CD}, which is more combinatorial in nature.

Cutoff at an entropic time was recently established also for random walk on random Cayley graphs for all Abelian groups \cite{Cayley1} as well as for the group of unit upper triangular matrices with entries in $\mathbb{Z}_p$ \cite{Cayley2}.  Eberhard and Varj\'u \cite{eberhard2020mixing} established cutoff at an entropic time for the Chung--Diaconis--Graham random walk. Another recent application of ``the entropic method" for a problem related to repeated averages can be found in \cite{averages}.
Lastly, we mention that cutoff was established also for
        random birth and death chains \cite{DW:random-matrices,S:cutoff-birth-death}. It would be interesting to establish the same for a natural model of a random walk on a  random weighted tree.

A recurring idea in the aforementioned works  is that the cutoff time can be described in terms of entropy.
        One can look at some auxiliary random process which up to the cutoff time can be coupled with, or otherwise related to, the original Markov chain---often in the above examples this is the random walk on the corresponding Benjamini--Schramm local limit.
        The cutoff time is then shown to be (up to smaller order terms) the entropic time, defined as the time at which the entropy of the auxiliary process equals the entropy of the invariant distribution of the original Markov chain.

We finish this discussion with two very recent instances in which the entropic method was used to prove cutoff in setups where the Markov chain is non-random and the entropy is considered directly w.r.t.\ the chain, rather than some auxiliary ``limiting" chain. Ozawa \cite{ozawa} gave recently an entropic proof of the aforementioned result of Lubetzky and Peres \cite{LP:ramanujan} that random walks on Ramanujan graphs exhibit cutoff (see also \cite{BL,HerRam}). His proof gives a certain general condition in terms of concentration of $-\log P^{t}(x,\cdot)$ around the entropy of $P^t(x,\cdot)$ which implies cutoff for random walks on expanders. In a recent breakthrough, Salez \cite{Salez2021cutoff} develops a more general connection between such a concentration and cutoff involving the varentropy. His formulation is actually done in terms of relative entropy.  He then applies it to give sufficient conditions for cutoff for chains with non-negative curvature. In particular, he shows that random walks on expander Cayley graphs of Abelian groups exhibit cutoff.  
\subsection{Organisation}
In Section~\ref{s:overview} we give an overview of the ideas and techniques involved in the proof of Theorem \ref{thm:1}. In Section~\ref{s:speed} we define the notion of quasi trees and prove results concerning the speed and entropy of a random walk on them, as well as some concentration estimates around the entropy. In Section~\ref{s:coupling} we define a coupling of a portion of the random graph $G_n^*$ and a quasi tree and of the random walks on them. The coupling involves a certain truncation event defined and studied in Section~\ref{sec:truncation}. This coupling is then used to conclude the proof of Theorem~\ref{thm:1}. Finally in Section~\ref{s:expander} we prove Proposition~\ref{pro:expander}.

\section{Overview}
\label{s:overview}
Recall the construction of the {\quasi} $\tree$ that we described after the statement of Proposition~\ref{pro:expander}. Let $(X_t)$ be a random walk on it starting from its root. Let  $(X_t')$ be an independent copy of $(X_t)$, given $\tree$.  Loosely speaking, the first stage of the analysis is to show $-\frac{1}{t}\log\mathbb{P}(X_t=X_t' \mid X_t,T)$ converges as $t \to \infty$ to some value $\mathfrak{\hat h}$, and has variance  $O(t)$. Note that the randomness here is jointly over $\tree$ and the walk $(X_t)$. As we later explain, we first establish this for a certain notion of loop-erased walk, and then deduce a related statement for the random walk, whereas the above statement is never proven explicitly and is not used.  In the case that $\tree$ is a Galton-Watson tree, the convergence is classical \cite{ErgodicGW}, and this variance estimate is proven in~\cite{BLPS}.

We take an elementary approach to the problems of extending some of the known ergodic theory for random walks on Galton-Watson trees to the setup of quasi trees and of establishing the above variance estimate. Our approach involves exploiting a certain i.i.d.\ decomposition of the walk and the {\quasi}  (see Lemma \ref{lem:regenerationtimes}), using a natural analogue of the notion of regeneration times used to prove a similar decomposition for random walks on Galton-Watson trees (see the discussion before Lemma \ref{lem:regenerationtimes}). From a high-level perspective, our conceptual contribution here is two-fold: 
\begin{itemize}
\item[(i)] The observation that such a decomposition can be used also when $\tree$ is not a Galton-Watson tree, corresponding to the case that the random graph is not "tree-like".
\item[(ii)] The observation that such a decomposition is powerful enough to deduce concentration for  $-\frac{1}{t} \log \prcond{X_t=X_t'}{ X_t,\tree}{}$.  
\end{itemize}

The above concentration implies that if $t=(\log |V_n|-C_{\epsilon} \sqrt{\log |V_n|} )/\mathfrak{\hat h}$, for a suitable choice of $C_\epsilon$, then we can write the law of $X_t$ as $(1-\epsilon)\mu +\epsilon \nu$, where for all $x$ in the support of $\mu$
\[
\mu(x)\in [|V_n|^{-1}\exp(  C' \sqrt{\log |V_n|}),|V_n|^{-1}\exp( 2 C' \sqrt{\log |V_n|})] 
\] 
for a positive constant $C'$. If the same applies for the graph $\G$, then this shows that the random walk is not mixed at time $t$, as with probability at least $1-\eps$ it is supported on a set whose size is at most 
\[
|V_n|\exp(  -C' \sqrt{\log |V_n|})=o(|V_n|)
\]
To see this note that the support of a distribution $\mu'$ with $\min_{x:\, \mu'(x)>0} \mu'(x) \ge \delta$ has size at most $1/\delta$; use this with $\mu'=\mu$ and by the bounded degree assumption a set of size $o(|V_n|)$ has stationary measure $o(1)$. Moreover,  since we show (Proposition \ref{pro:expander}) that $\G $ is w.h.p.\ an expander, a standard application of the Poincar\'e inequality\footnote{Write the law of the walk at time $t$ as a mixture $ \eps \nu+(1-\eps)\mu$, with $\mu$ having $L_2$ distance at most $O(\exp(  C' \sqrt{\log |V_n|})) $ from the stationary distribution, and then apply the Poincar\' e inequality to $\mu$.} shows that this would imply that the random walk on $\G$ is well-mixed at time $t+\widetilde C \sqrt{\log |V_n|}$.

Motivated by the above, we shall couple a portion of the random graph  $\G$ rooted at a vertex $x$ with a portion of a {\quasi} in a certain manner that will facilitate a coupling of the random walks on these graphs up to the above time $t$. 
Several difficulties arise when implementing this approach. The first is that while the random graph $\G$ rooted at a vertex $x$ is typically (i.e., for most $x$)  locally indistinguishable from a {\quasi} from the perspective of the random walk, this fails for some $x \in V_{n}$. This turns out to not be a substantial obstacle. Following \cite{BhLP:comparing-mixing}, loosely speaking, we argue that w.h.p.\ $\G$ is such that for all starting points $x$ the walk is likely to reach a ``good" starting point for which the aforementioned coupling is successful with  probability close to 1. The good starting points will be ones that are locally {``\quasi}  like" in some precise sense.

 The second difficulty is that there is a limit to how one may hope to successfully couple a portion of the random graph $\G$ rooted at $x$ with a portion of a {\quasi}. Indeed, the $R$-balls in the {\quasi} are sampled at each stage at random with replacements, and in $\G$ without replacements. We attempt to couple the two graphs one ball at a time, using a maximal (i.e., optimal) coupling for the distribution of the balls. However, these maximal couplings may fail on some occasions, and they do so more often as the size of the portions of the two graphs we revealed exceeds $\sqrt{|V_n|}$, and becomes closer and closer to size $|V_n|$. When the maximal coupling fails, we may even get two $R$-balls in the portion of the random graph we revealed that overlap.

To overcome this difficulty, we argue that starting from a good vertex the random walk is unlikely to visit, by time $t$ defined above, any $R$-ball for which the coupling fails. Following~\cite{BLPS}, loosely speaking, we truncate both the {\quasi} and the portion of the random graph around $x$ which we reveal at edges with the property that the probability that the random walk crosses them by time $t$ is ``too small", say less than $|V_n|^{-1}\exp( \frac 12 C' \sqrt{\log |V_n|})$. This is crucial in avoiding revealing too many vertices, which would result in the coupling of the balls failing ``too often", while being able to couple the walks on the two graphs by time $t$ with a large success probability. The actual details of the argument vary slightly from this simplified description.

We now explain in more detail how we study the random walk on the \quasi. We refer to the edges connecting a vertex to a new $R$-ball as \emph{long range} edges. One can consider the induced walk on the long range edges, which is the walk viewed only at times when it crosses long range\ edges. One can then define the loop-erasure of this induced chain in a natural manner (see Definition \ref{def:lerw}). We say that a long range edge $e=(x,y)$ is a \emph{regeneration edge} if it is crossed, and after it is first crossed the random walk never returns to $x$. For a regeneration edge $e$, the time it is crossed is then called a \emph{regeneration time}. It is this notion which gives us the aforementioned decomposition of the walk and the {\quasi} into i.i.d.\ blocks (see Lemma \ref{lem:regenerationtimes} for a precise statement). Using this decomposition we derive the concentration of the analogue of (ii) above w.r.t.\ the loop-erasure. We then translate this into a corresponding claim concerning the random walk.

We use the fact that the connected components of $G_n$ are of size at least~$3$ to deduce that
\begin{itemize}
\item the walk on the {\quasi} has a positive speed, where distance is measured in the number of long range edges separating a point and the root of the \quasi, and
\item that the spacings between the regeneration times have an exponentially decaying tail.
\end{itemize}
 This plays a role both in deriving the aforementioned concentration estimate for the loop-erasure, as well as in translating it back to one concerning the random walk on the {\quasi}. For the sake of being precise, we note that we do not explicitly translate it exactly to the claim (ii) above, although this could be done without too much additional effort. We do not require this exact formulation, and thus do not pursue it.

We now provide an alternative description of the cutoff time. Let $(\xi_k)$ and  $(\xi_k')$ be independent (given $\tree$) loop-erased random walks on $\tree$ in the above sense started from its root.   We show that $-\frac{1}{k}\log\mathbb{P}(\xi_k=\xi_k' \mid \xi,T)$ converges a.s.\ to some constant $\mathfrak{h}$ as $k \to \infty$. We also show that the `speed' of the random walk $(X_t)$ on $\tree$, measured in the `long range distance' (the long range distance of~$x$ from the root is the level to which $x$ belongs) converges to some constant $\nu $. The cutoff time is then $\frac{\log n}{\nu \mathfrak{h}}$. We comment about the possibility of defining $\nu$ and $\mathfrak{h}$ in terms of a Benjamini-Schramm limit in Remark \ref{rem:BSlimit}.
The cutoff time resembles that in \cite{BLPS}. We note that this is a consequence of our definitions for loop-erased random walk and for speed, which are not the standard ones.

The assumption on the minimal size of a connected component of  $G_n$ is also used in bounding the spectral gap of $\G$. We essentially compare it to that of a random graph sampled from the configuration model with minimal degree at least 3 and bounded maximal degree. More effort is needed to bound the \emph{absolute} spectral gap.             

\emph{Notation:} For functions $f$ and $g$ we will write $f(n) \lesssim g(n)$ if there exists a constant $c > 0$ such that $f(n) \leq c g(n)$ for all $n$.  We write $f(n) \gtrsim g(n)$ if $g(n) \lesssim f(n)$.  Finally, we write $f(n) \asymp g(n)$ if both $f(n) \lesssim g(n)$ and $f(n) \gtrsim g(n)$.  Let $G = (V,E)$ be a graph and let $A\subseteq  V$. We write $\partial A$ for the internal vertex boundary of $A$, i.e.\ $\partial A = \{ x\in A: \exists \ y\notin A \text{ s.t. } \{ x,y\} \in E\}$.

\section{Speed and entropy of simple random walk on quasi trees}
\label{s:speed}

We start this section by recalling the construction of a quasi tree  $\tree=\tree_n$  from the introduction (see Figure \ref{fig:quasitree}). This will serve as an infinite approximation to the graph $\G$. Then we will prove scaling limit and fluctuation results for the entropy and the speed of simple random walk on $\tree$.

\begin{definition}\label{def:tree}
        \rm{
        Let $C_1>0$ be a constant. We define a (random) {\emph{\quasi}} $\tree=\tree_{C_1}$ to be an infinite graph constructed as follows. Let $B$ be a random ball (in the graph distance of $G_n$) obtained by first sampling a uniform vertex and then considering its $R=\lceil C_1\log \log n \rceil$ neighbourhood. We call such a ball a $\tree\rball$. 
        
        Let $\rho$ be its centre and we call it the root of $\tree$. Next join by an edge each other vertex $v$ of $B$ (except for the root) to the centre of an i.i.d.\ copy $B_v$ of $B$, i.e., the balls are sampled independently with replacement.  Repeat the same procedure for every vertex of the new balls except for their centres. We call edges joining different balls \emph{long range edges}. 
        
        The {\quasi} is a random variable taking values in the topological space $\mathcal{T}$ defined as the space of all rooted locally finite unlabelled connected graphs with a collection of distinguished edges, called long range edges, with the property that every simple path between a pair of vertices must cross the same collection of long range edges. In other words, the long range edges give rise to a tree structure.

For $x,y \in \tree$ we write $d_\tree(x,y)$ or simply $d(x,y)$ when $\tree$ is clear from context, for the number of long range edges on the shortest path from $x$ to $y$. Note that this is not the usual graph distance on $\tree$, but for us this will be a useful notion of distance. One can think of this distance as ``the long range distance", but since we rarely consider the graph distance on $T$, we do not use this terminology. A level consists of all vertices at the same distance from $\rho$, i.e.\ when $d(\rho,x)=r$, then~$x$ belongs to the $r$-th level. We write $\B_r(x)=\B(x,r)=\{y:d_\tree(x,y) \le r \}$ for the ball of radius $r$ centred at $x$. We also write $\tree(x)$ for the subgraph of $\tree$ rooted at $x$. More precisely, $\tree(x)$ is the induced graph on the vertices $y$ satisfying $d(\rho,y)=d(\rho,x)+ d(x,y)$. The vertices of $\tree(x)$ are called the \emph{descendants} or \emph{offspring} of $x$.
}
\end{definition}

\begin{remark}
\rm{We now explain the choice of $R$. We are going to define a coupling of the walk on the random graph with a walk on a quasi tree up to time $t$ of order $\log n$. In order for the coupling to succeed, we need to ensure that the walk on the {\quasi} does not reach the boundary of a $\tree\rball$ by time $t$. In order to achieve this we need to take $R$ of order at least $\log \log n$. The coupling also involves an exploration of a portion of the random graph at the same time with the corresponding {\quasi} (for both graphs we are primarily interested in the portion of the graph where the walk is likely to be by time $t$). As will become apparent, in order for this to succeed, we also need to ensure that by time $t$ we only reveal $o(n)$ vertices of $\G$ and that typically the other endpoint of long range edges we reveal satisfy that the balls of radius $R$ around them in $G_n$ are disjoint from the previously exposed such balls (as is the case for a quasi tree). This motivates us to take $R$ to be as small as possible.
We note that our results in this section about speed and entropy of random walk on $\quasi$ are not limited to this choice of $R$. For the sake of our results on speed and entropy of the walk we could have taken $R$ to be the diameter of the graph. In the case that the sequence $(G_n)_n$ has a Benjamini-Schramm limit, (for the aforementioned purposes) we could have taken the balls in the construction to be i.i.d.\ rooted copies of the Benjamini-Schramm limit. In fact, with a bit more care, one can derive from our analysis that up to subleading order terms the speed of the walk and its entropy would be the same in these cases, as in our construction, and similar concentration bounds hold also in these cases.
Taking $R$ to be the diameter or using the Benjamini-Schramm limit (when it exists) may seem more natural, at least from the perspective that results about the speed and entropy of the walk in these cases are of interest in their own right. However, as will become clear, for the sake of proving cutoff our choice of $R$ is natural.}
\end{remark}

For a Markov chain $X$ and a vertex $x$ we denote the first hitting time of~$x$ by $\tau_x = \inf\{t\geq 0:X_t=x\}$ and by 
$\tau_x^+=\inf\{t\geq 1: X_t=x\}$ the first return time to $x$. 

\begin{lemma}\label{lem:uniformdrift}
Let $\tree$ be a {\quasi} as in Definition~\ref{def:tree}. Let $X$ be a simple random walk on $\tree$. For every $x\in \tree$ which is not in the $\tree\rball$ of the root, we write $p(x)$ for the ``parent'' of the centre of the $\tree\rball$ containing $x$, i.e.\ $p(x)$ satisfies $d(\rho,p(x)) = d(\rho,x)-1$. For $x$ in the  $\tree\rball$ of the root, we set $p(x)=\rho$.  Let $\mathbb{P}_x$ denote the law of the random walk on $T$ started from~$x$.
 Then there exists a positive constant $c$ so that for all $n$ and for every realisation of $\tree$ 
\[
\prstart{\tau_{p{(x)}}^{+}\wedge \tau_x^+=\infty}{x} \geq c, \text{ for all }  x\in \tree.
\]
%
\end{lemma}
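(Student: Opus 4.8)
The plan is to deduce the lemma from one geometric input: a bound, \emph{uniform over every realisation of $\tree$}, on the effective resistance (with unit conductances) from the root of a quasi tree, and of every sub‑quasi‑tree $\tree(z)$ rooted at a ball centre, to infinity. I would first prove that there is a universal constant $C_0$ with $\Reff(\rho\leftrightarrow\infty)\le C_0$, and likewise $\Reff(z\leftrightarrow\infty\text{ in }\tree(z))\le C_0$ whenever $z$ is the centre of one of the $R$-balls and $\tree(z)$ is viewed as a network on its own. By Thomson's principle it suffices to exhibit a unit flow from $\rho$ to infinity of energy $O(1)$. Here the hypothesis enters: since every connected component of $G_n$ has at least $3$ vertices and $R\ge 2$ (which holds for all large $n$, the only regime relevant to the applications), every $R$-ball has at least $3$ vertices, so inside each ball the centre $w$ has a neighbour, and the set of vertices at distance at most $2$ from $w$ inside the ball contains two distinct non‑centre vertices $u_1,u_2$ — otherwise the component of $w$ in $G_n$ would have at most $2$ vertices. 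Each $u_i$ carries a long range edge into a distinct fresh child ball, so I can route flow down the tree of balls so that inside every ball it visits it splits its incoming current equally and travels along two geodesics of length at most $2$ from the centre to such vertices $u_1,u_2$, then continues along their long range edges. This flow lives on a binary‑tree‑like subset of $\tree$: a ball at long range distance $k$ from $\rho$ receives current $2^{-k}$ and passes it through a bounded number of edges (at most $4$ internal plus $2$ long range), contributing $O(4^{-k})$ to the energy, and there are $2^{k}$ such balls, so the total energy is $\sum_{k\ge0}2^{k}\,O(4^{-k})=O(1)$. The identical construction started from $z$ bounds $\Reff(z\leftrightarrow\infty\text{ in }\tree(z))$.

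Next I would record an escape estimate at a ball centre. Let $w$ be the centre of a ball that is a child of a vertex $v$, joined to $v$ by a long range edge $e$; this $e$ is the unique edge of $\tree$ joining $\tree(w)$ to its complement, so $\{X$ started at $w$ never crosses $e\}=\{X$ stays in $\tree(w)$ forever$\}$. The probability of the latter is at least the probability that the first step of $X$ enters $\tree(w)$ and $X$ never afterwards returns to $w$; a one‑step decomposition shows this equals $\tfrac{D}{D+1}\,\prstart{\tau_w^{+}=\infty}{w}$ computed in the standalone network $\tree(w)$, where $D=\deg_{\tree(w)}(w)\le\Delta$. Combining with the identity $\prstart{\tau_w^{+}=\infty}{w}=\bigl(D\cdot\Reff(w\leftrightarrow\infty\text{ in }\tree(w))\bigr)^{-1}$ and the bound from Step 1,
\[
\prstart{X\text{ stays in }\tree(w)\text{ forever}}{w}\ \ge\ \frac{1}{(D+1)C_0}\ \ge\ \frac{1}{(\Delta+1)C_0}.
\]

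To conclude, fix $x\in\tree$ and set $p=p(x)$; note $d(\rho,p)\le d(\rho,x)$ in every case, so $x$ and $p$ lie at levels strictly below every level of $\tree(w)$ for any child ball centre $w$ of $x$, hence are reached from within $\tree(w)$ only by first crossing back out to $x$. If $x$ is not the centre of its ball, it carries a long range edge to the centre $w$ of one of its child balls; conditioning on the first step $x\to w$ (probability $\ge(\Delta+1)^{-1}$) and then on $X$ staying in $\tree(w)$ forever, $X$ never returns to $x$ and never hits $p$, giving $\prstart{\tau_{p}^{+}\wedge\tau_x^{+}=\infty}{x}\ge\bigl((\Delta+1)^{2}C_0\bigr)^{-1}$. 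If instead $x$ is the centre of its ball, pick a neighbour $v$ of $x$ inside that ball; then $v$ is a non‑centre vertex with a long range edge to the centre $w$ of a child ball, and $v\neq x$, $v\neq p$. Conditioning on the two steps $x\to v\to w$ (probability $\ge(\Delta+1)^{-2}$) and then on $X$ staying in $\tree(w)$ forever, the same reasoning applies — the vertices $v,w$ visited at times $1,2$ are neither $x$ nor $p$ — so $\prstart{\tau_{p}^{+}\wedge\tau_x^{+}=\infty}{x}\ge\bigl((\Delta+1)^{3}C_0\bigr)^{-1}$. Taking $c=\bigl((\Delta+1)^{3}C_0\bigr)^{-1}$ proves the lemma.

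The only genuinely delicate point is the uniform resistance bound of Step 1, and within it the insistence on routing the escaping flow to two children lying at \emph{bounded} distance from each ball's centre: if one allowed the flow to reach two arbitrary children it might have to traverse distance of order $R\asymp\log\log n$ inside each ball, which would yield only $\Reff=O(\log\log n)$ and hence a vanishing, not uniform, escape probability. The assumption that every component of $G_n$ has at least $3$ vertices is exactly what furnishes the two nearby non‑centre vertices and rules this out; without it a ball could be a single edge, the tree of balls would fail to branch, the quasi tree would be recurrent, and the statement would be false — everything after Step 1 is routine bookkeeping with the tree structure of the long range edges.
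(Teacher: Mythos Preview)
Your proof is correct and takes a genuinely different route from the paper's. The paper argues via a drift/supermartingale: it introduces a half-level distance $\til d$, checks by a short case analysis that $\til d(\rho,X_t)$ has nonnegative one-step drift and that the three-step increments $Y_t=\til d(\rho,X_{3t})$ have \emph{uniformly} positive drift, and then applies Azuma--Hoeffding to the Doob martingale of $Y$ to get a positive escape probability. Your argument replaces all of this by an effective-resistance estimate: a binary unit flow routed through two nearby non-centre vertices in each ball gives $\Reff(\rho\leftrightarrow\infty)\le C_0$ uniformly in $n$ and in the realisation, and the escape bound then follows from the standard identity $\prstart{\tau_w^+=\infty}{w}=(\deg(w)\,\Reff(w\leftrightarrow\infty))^{-1}$ together with a one- or two-step push into a child ball.

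What each approach buys: your resistance argument is shorter, avoids the three-step case analysis and the concentration inequality, and makes completely transparent where the ``components of size $\ge3$'' hypothesis is used (it is exactly what forces the flow to branch in two with $O(1)$ routing cost per ball, so that the energy series is geometric). The paper's drift argument, on the other hand, yields as an immediate byproduct the exponential speed estimate $\pr{d(\rho,X_t)\le c_1 t}\le e^{-c_2 t}$ recorded in Remark~\ref{rem:decayofdistance}, which is used repeatedly later (exponential tails for regeneration times in Lemma~\ref{lem:regenerationtimes}, and the coupling analysis in Lemma~\ref{lem:couplingsuccess}). Your method proves the lemma as stated but does not directly give this quantitative tail; recovering it would require a separate argument (e.g., combining your uniform escape bound with a regeneration-time construction). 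One minor caveat: your flow construction uses $R\ge 2$, which holds for all large $n$; the paper's drift computation does not need this.
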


\begin{proof}[\bf Proof]
        It will be useful in the proof to think of vertices of $\tree$ lying in half and full levels as follows. All centres at the same distance from the root are placed in the same half level. Their neighbours in the corresponding balls are placed in the same full level. We now change the definition of distance to take into account half levels, i.e.\ the distance between a centre and other points in its ball is equal to~$\tfrac12$ and the distance between two endpoints of a long range edge is also $\tfrac12$. We denote this distance by $\til{d}$ and it satisfies $\til{d}(\rho,x)=d(\rho,x)-\1(x \text{ is a centre of a $\tree\rball$})/2$.
        We next claim that for all $x$ we have 
        \begin{align}\label{eq:positivedriftalways}
                \econd{\til{d}(\rho, X_1)-\til{d}(\rho,x)}{X_0=x} \geq 0.
        \end{align}
Suppose first that $x$ is a vertex which is neither a centre nor a neighbour of a centre. Then 
        \begin{align}\label{eq:strictlypos}
        \econd{\til{d}(\rho, X_1)-\til{d}(\rho,x)}{X_0=x} \geq \frac{1}{2(\Delta+1)},
        \end{align}
        i.e.\ there is positive drift downwards. If $x$ is a centre with at least two neighbours in its corresponding ball, then 
        \begin{align}\label{eq:strictlyposit}
        \econd{\til{d}(\rho, X_1)-\til{d}(\rho,x)}{X_0=x} \geq \frac{1}{6}.
        \end{align}
        Finally, suppose that $x$ is either a centre of degree equal to $2$ or $x$ is a neighbour of a centre. In both cases we have 
        \[
        \econd{\til{d}(\rho, X_1)-\til{d}(\rho,x)}{X_0=x} =0.
        \]
        This concludes the proof of~\eqref{eq:positivedriftalways}.
        We now look at the distance from the root at times that are multiples of $3$, i.e.\ we consider $Y_t=\til{d}(\rho, X_{3t})$ and write $\F_t=\sigma(X_i,i\leq 3t)$. We next show that there exists a positive constant $\delta$ such that 
        \begin{align}\label{eq:boundonincrements}
        \econd{Y_{t+1}-Y_t}{\F_t}\geq \delta >0.
        \end{align}
        We start by writing the conditional expectation above as follows
           \begin{align*}
        &\econd{Y_{t+1}-Y_t}{\F_t} = \econd{\til{d}(\rho,X_{3t+3})-\til{d}(\rho,X_{3t+2})}{\F_t} \\&+ \econd{\til{d}(\rho,X_{3t+2})-\til{d}(\rho,X_{3t+1})}{\F_t} 
        + \econd{\til{d}(\rho,X_{3t+1})-\til{d}(\rho,X_{3t})}{\F_t}. 
        \end{align*}
        It then follows from~\eqref{eq:positivedriftalways} that all terms appearing on the right hand side above are always non-negative. 
       We now consider different cases for $X_{3t}$ in order to show that at least one of the three terms in the r.h.s.\ is  strictly positive. 
       Let $K$ be the set of  vertices of $\tree$ that are centres and let $K_2$ be the subset of $K$ consisting of  those  centres which have at least two neighbours in their corresponding balls. Let also $N$ be the set of vertices of $\tree$ that are neighbours of centres. We write $A=(K^c \cap N^c) \cup K_2$. Then on the event $\{X_{3t} \in A\}$ we have 
              \begin{align*}
       \econd{\til{d}(\rho,X_{3t+1})-\til{d}(\rho,X_{3t})}{\F_t}  \geq  \min\left(\frac{1}{6} , \frac{1}{2(\Delta+1)}\right).
       \end{align*}
       On the event $\{X_{3t} \in N\}$ we get that there exists $x\in A$ with $P(X_{3t},x)\geq 1/(\Delta+1)$, where $P$ stands for the transition matrix of $X$ (indeed, either the centre of the ball to which $X_{3t}$ belongs is in $K_2$, or $X_{3t}$ has a neighbour in the same ball which is not adjacent to the centre of the ball).  So, writing $p_1=\min\left(\frac{1}{6(\Delta+1)} , \frac{1}{2(\Delta+1)^2}\right)$ on the event $\{X_{3t} \in N\}$ we get 
       \begin{align*}
      & \econd{\til{d}(\rho,X_{3t+2})-\til{d}(\rho,X_{3t+1})}{\F_t} \\& = \sum_{z} P(X_{3t},z)\econd{\til{d}(\rho,X_{1})-\til{d}(\rho,z)}{X_0=z} \geq  p_{1},
       \end{align*}
       where we also used again~\eqref{eq:positivedriftalways}. On the event $\{X_{3t}\in K\setminus K_2\}$ we get that there exists $x\in A$ with $P^2(X_{3t},x) \geq 1/(\Delta+1)^2$, and hence this gives that on $\{X_{3t}\in K\setminus K_2\}$ 
        \begin{align*}
       &\econd{\til{d}(\rho,X_{3t+3})-\til{d}(\rho,X_{3t+2})}{\F_t} \\& = \sum_{z} P^2(X_{3t},z)\econd{\til{d}(\rho,X_{1})-\til{d}(\rho,z)}{X_0=z} \geq p_2 ,
       \end{align*}
 where $p_2=\min\left(\frac{1}{6(\Delta+1)^2} , \frac{1}{2(\Delta+1)^3}\right)$.       This concludes the proof of~\eqref{eq:boundonincrements}.
Let $t \in \N$ to be determined.        Consider now the Doob martingale 
\[
M_{\ell}=Y_{t+\ell} - Y_t -\sum_{i=1}^{\ell} \econd{Y_{t+i}-Y_{t+i-1}}{\F_{i-1}}. 
\]
This has bounded increments, as the distance can only change by at most $3/2$ in $3$ steps of the walk. 

There exist positive constants $c_{1} ,c_{2}\in (0,1)$ so that 
\[
\mathbb{P}(D \mid X_0=x)\geq c_1, \quad \text{where }D:=\left\{ Y_s \geq Y_0 + \frac{3}{2} \text{ for all } s\leq t, \ Y_t\geq Y_0+c_2t+\frac{3}{2} \right\},
\]
where $c_1$ may depend on our choice of $t$. Let $r=\frac{2c_2t}{3}$.  By Azuma-Hoeffding and using~\eqref{eq:boundonincrements} 
\begin{equation}
\label{e:supermartingaleY}\begin{split}
&\prcond{Y_{t+\ell}\leq Y_t-c_2t }{X_0=x,D}{}  \\ &\le \prcond{M_{\ell} \leq -c_2t-\delta \ell }{X_0=x,D}{} \1\{\ell \ge r \}    \leq e^{-c \ell   }\1\{\ell \ge r \},
\end{split}
\end{equation}
for some constant $c>0$. Therefore,
 taking $t$ large enough and summing over $\ell$ we get that there exists a positive constant $c_3$ so that 
\begin{align}\label{eq:boundfory'}
\prcond{Y_s > Y_0+\frac{3}{2} \text{ for all } s\geq t}{X_0=x,D}{} \geq c_3.
\end{align}
Therefore, 
\begin{align*}
 \prstart{\tau_{p{(x)}}\wedge \tau_x^+=\infty}{x}        \geq c_1 c_3>0,
\end{align*}
 and this concludes the proof.
\end{proof}

\begin{remark}\label{rem:decayofdistance}
        \rm{
        Note that the above proof also gives that there exist positive constants $c_1$ and $c_2$ so that for all $t$
        \[
        \pr{d(\rho,X_{t})\leq c_1 t}\leq e^{-c_2t}.
        \]
        }
\end{remark}

\begin{definition}
        \rm{Let $\tree$ be a {\quasi} as in Definition~\ref{def:tree} and let $X$ be a simple random walk on~$\tree$. 
 A random time $\sigma$ is called a \emph{regeneration time} for $X$ if the long range edge $\{X_{\sigma-1}, X_\sigma\}$ is crossed for the first and last time at time $\sigma$. (We use $\{a,b \}$ to denote an undirected edge connecting $a$ and $b$, whereas $(a,b)$ to denote a directed edge from $a$ to $b$.)}
\end{definition}

Using Lemma~\ref{lem:uniformdrift} together with Remark \ref{rem:decayofdistance} we get 
that there are infinitely many  regeneration times almost surely.

The authors of \cite{BLPS} attribute to Kesten the ``tree analogue" (i.e.\ the case where $T$ is taken to be a Galton-Watson tree)  of the following lemma. The tree analogue was reproduced in \cite{MR1439974}. A similar statement is proved in~\cite[Proposition~3.4]{LyonsPemantlePeres} and our proof is similar to theirs. We include the proof here for the sake of completeness. Recall that $d$ denotes the ``long range" distance, and not the graph distance.

\begin{lemma}\label{lem:regenerationtimes}
Let $\tree$ be a {\quasi} as in Definition~\ref{def:tree} with root $\rho$. Fix $K\geq 0$ and let $T_0$ be a realisation of the first $K$ levels of $\tree$. 
Let $X$ be a simple random walk on $\tree$ started from the root. Let $\tree^a$ be the graph obtained by joining the root of $\tree$ to a new vertex $\rho^a$ by a single edge and let $\til{X}$ be a simple random walk on $\tree^a$ started from $\rho$. Let $\sigma_0$ be the first time that $X$ reaches $\partial \B_K(\rho)$.  Let $\sigma_i$ be the $i$-th regeneration time satisfying $\phi_i=d(\rho,X_{\sigma_i}) > K$ (i.e.\ $(\sigma_i)_{i=1}^{\infty}$ are the regeneration times after the last visit to $\B_K(\rho)$).
Then conditional on $\B(\rho,K)=T_0$,
 we have that 
 \begin{itemize}
        \item $(\tree(X_{\sigma_i})\setminus \tree(X_{\sigma_{i+1}}), (X_t)_{\sigma_{i}\leq t\leq \sigma_{i+1}})$ are i.i.d.\ for $i \ge 1$, and are jointly independent of  $(\tree\setminus \tree(X_{\sigma_{1}}), (X_t)_{0\leq t\leq \sigma_{1}})$,
        \item $(\sigma_i-\sigma_{i-1})_{i\geq 1}$ and $(\phi_i-\phi_{i-1})_{i\geq 1}$ have exponential tails and
        \item for all $i\geq 1$, the pair $(\tree(X_{\sigma_i}), (X_t)_{t\geq \sigma_i})$ has the law of $(\tree,\til{X})$ given that $\til{X}$  never visit $\rho^a$. (Note that this conditioning also affects the law of $\tree$.)
 \end{itemize}
\end{lemma}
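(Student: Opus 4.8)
The plan is to decompose the trajectory of $X$ at the regeneration times $(\sigma_i)_{i \ge 1}$ and argue that the pieces between consecutive regeneration times are i.i.d., using the strong Markov property together with a ``freshness'' argument for the quasi tree. The key structural observation is that at a regeneration time $\sigma_i$, the long range edge $e_i = \{X_{\sigma_i - 1}, X_{\sigma_i}\}$ is crossed for the last time, so the walk after $\sigma_i$ stays entirely in $\tree(X_{\sigma_i})$ and never backtracks across $e_i$; moreover, by the construction of the quasi tree (Definition~\ref{def:tree}), the subtree $\tree(X_{\sigma_i})$ hanging below $X_{\sigma_i}$ is, conditionally on everything revealed so far, an independent copy of a quasi tree rooted at $X_{\sigma_i}$ (since each $\tree\rball$ is sampled i.i.d.\ with replacement). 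So once we know $X_{\sigma_i}$ is a regeneration point, the pair $(\tree(X_{\sigma_i}), (X_t)_{t \ge \sigma_i})$ depends on the past only through this regeneration event. The conditioning on $\B(\rho,K) = T_0$ is harmless because all the regeneration times under consideration occur after the last visit to $\B_K(\rho)$, hence after $X$ has left the revealed region.

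First I would set up the regeneration structure carefully: define the event that a given long range edge, once crossed, is never re-crossed, and observe (using Lemma~\ref{lem:uniformdrift} and Remark~\ref{rem:decayofdistance}) that such edges occur infinitely often a.s., so the $\sigma_i$ are well-defined and finite. Next I would prove the exponential tails of $(\sigma_i - \sigma_{i-1})$ and $(\phi_i - \phi_{i-1})$: the point is that Lemma~\ref{lem:uniformdrift} gives a uniform lower bound $c > 0$, over all realisations of $\tree$ and all vertices $x$, on the probability that starting from $x$ the walk crosses the long range edge to $p(x)$'s child... more precisely that $\tau_{p(x)}^+ \wedge \tau_x^+ = \infty$, i.e.\ the walk escapes to infinity without returning to $x$ or its parent. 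Iterating this, each time the walk advances one long range level there is a uniformly positive chance that the current crossing is a ``last crossing'' of that edge and in fact a regeneration; combined with the positive-speed estimate of Remark~\ref{rem:decayofdistance} (so that $\phi_i$ grows linearly and the walk does not dawdle), a standard renewal/geometric-trials argument gives the exponential tail on both the spatial gaps $\phi_i - \phi_{i-1}$ and the temporal gaps $\sigma_i - \sigma_{i-1}$.

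For the i.i.d.\ and independence statements, I would argue inductively: reveal $\tree$ and $X$ up to time $\sigma_1$, which determines $(\tree \setminus \tree(X_{\sigma_1}), (X_t)_{0 \le t \le \sigma_1})$; conditionally on this and on the event that $\sigma_1$ is a regeneration time, the walk $(X_{\sigma_1 + t})_{t \ge 0}$ is a simple random walk on the (as-yet-unrevealed) subtree $\tree(X_{\sigma_1})$ started at its root, conditioned never to leave $\tree(X_{\sigma_1})$ through $e_1$ — and this subtree is a fresh quasi tree by the i.i.d.-with-replacement sampling in Definition~\ref{def:tree}. Since the only thing that matters for the future is that the root of $\tree(X_{\sigma_1})$ is a regeneration point, the pair $(\tree(X_{\sigma_1}), (X_t)_{t \ge \sigma_1})$ has the law of $(\tree, \tilde X)$ conditioned on $\tilde X$ never visiting $\rho^a$ (the extra pendant vertex $\rho^a$ plays exactly the role of ``the parent side of $e_1$'', which must never be entered). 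Repeating this at $\sigma_2, \sigma_3, \dots$ — each time peeling off $\tree(X_{\sigma_i}) \setminus \tree(X_{\sigma_{i+1}})$ together with the excursion $(X_t)_{\sigma_i \le t \le \sigma_{i+1}}$ — and noting that each peeled piece is a deterministic function of the fresh quasi tree and fresh randomness of the walk between two regenerations, yields that the pieces are i.i.d.\ for $i \ge 1$ and jointly independent of the initial segment.

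The main obstacle, and the step requiring the most care, is making the ``freshness'' / independence argument rigorous: one must verify that conditioning on the past up to $\sigma_i$ together with the regeneration event does not secretly bias the law of $\tree(X_{\sigma_i})$ beyond the stated conditioning (never visiting $\rho^a$), and that the event $\{\sigma_i$ is a regeneration time$\}$ is measurable with respect to the future of the walk in a way compatible with the strong Markov property. This is precisely the place where the i.i.d.-with-replacement sampling of the $\tree\rball$'s is essential (a Galton–Watson tree would behave the same way, which is why the analogue is classical), and where one needs to be careful that ``regeneration'' is not an event depending on the full infinite future in an intractable way — but rather can be realised as a last-exit decomposition. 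I would handle this by following the structure of \cite[Proposition~3.4]{LyonsPemantlePeres}: define the regeneration via the event that the walk restricted to levels $\ge \phi_i$ never returns to level $< \phi_i$, express the joint law as a sum over possible regeneration positions, and use the Markov property at each candidate time, with the uniform escape probability from Lemma~\ref{lem:uniformdrift} guaranteeing that the normalising constants are bounded away from $0$.
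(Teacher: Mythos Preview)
Your proposal is correct and takes essentially the same approach as the paper: both use Lemma~\ref{lem:uniformdrift} and Remark~\ref{rem:decayofdistance} for the exponential tails, and both follow the Lyons--Pemantle--Peres strategy of summing over candidate regeneration times and factorising via the Markov property. The one device the paper makes explicit that you leave implicit is the decomposition $\{\sigma_i=t\}=\{t\text{ is fresh}\}\cap A_{i-1,t}\cap\{\tau_{X_{t-1}}^t=\infty\}$, where $A_{i-1,t}$ is the (past-measurable) event that the walk up to time $t$ has exactly $i-1$ regeneration times; this is precisely what turns your ``last-exit decomposition'' into a clean product $Q(A)/Q(\tau_{\rho^a}=\infty)\cdot\mathbb{P}_{T_0}(\text{past}\in B)$, and is the concrete realisation of the concern you flag in your final paragraph.
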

We emphasise that above we view $\tree$ and $\tree(X_{\sigma_i})$ as rooted graphs defined up to graphs isomorphisms which preserve the root.
\begin{remark}
\rm{The conditioning on $\B(\rho,K)=T_0$ is not needed either for the i.i.d.\ decomposition or for deriving the later results about the speed, entropy and concentration around the entropy for the walk. However, the fact that such results hold even under the conditioning on   $\B(\rho,K)=T_0$, will be useful for the cutoff analysis later on.}
\end{remark}

\begin{proof}[\bf Proof]
        
        Following \cite{ErgodicGW} we define the set ($\mathrm{Q}$ below stands for ``quasi")
        \begin{align}\label{eq:pathsinqtrees}
        \begin{split}
        \mathrm{PathsInQTrees}=\{(\tree, (x_i)_{i \ge 0}):\tree\in \mathcal{T}, \, (x_i)_{i \ge 0} \text{ is a path in $\tree$} \\ \text{starting from its root}  \},
        \end{split}
        \end{align}
       where we recall that $\mathcal{T}$ was defined in Definition~\ref{def:tree}.
        We equip the space $\mathrm{PathsInQTrees}$  with the $\sigma$-algebra generated by $(\tree,X)$, where $\tree$ is the random quasi tree from Definition \ref{def:tree} and $X=(X_i)_{i \ge 0}$ is a simple random walk on $\tree$ started from its root.

Using Lemma~\ref{lem:uniformdrift} together with Remark \ref{rem:decayofdistance} we get the existence of the infinite sequence of regeneration times with the property that $(\sigma_i-\sigma_{i-1})_{i\geq 1}$ and $(\phi_i-\phi_{i-1})_{i\geq 1}$ have exponential tails.
Analogously to~\eqref{eq:pathsinqtrees} we define
\begin{align*}
\mathsf{PathsInAugQTrees}=\{(\tree, (x_i)_{i \ge 0}):\tree \in \mathcal{T}, \, (x_i)_{i \ge 0} \text{ is a path in} \\ \text{$\tree^{a}$ starting from its root}  \} 
\end{align*}
 and equip it with the $\sigma$-algebra generated by $(\tree, X)$, where $\tree$ is a (random) quasi tree, and $X$ is simple random walk on $\tree^a$ started from its root.
For a set~$A \subset \mathsf{PathsInAugQTrees}$
 we write 
\begin{align*}
        Q(A) = \prstart{(T,X)\in A, \tau_{\rho^a}=\infty}{\rho},
\end{align*} 
where $X$ is a simple random walk on $T^a$ started from $\rho$.  For a vertex $v$ which is a centre of some $\tree \rball$ we write $\tree_v$ for the tree obtained by removing from $\tree$ all of $\tree(v)$ other than $v$ itself ($\tree_v$ has the same root as $\tree$). In order to prove the i.i.d.\ property, it suffices to show that  conditional on $\B_K(\rho)=T_0$, for all $i \ge 1$ we have that $(\tree(X_{\sigma_i}), (X_k)_{k\geq \sigma_i})$ is independent of $(\tree_{X_{\sigma_i}}, (X_{k})_{k\leq \sigma_i})$  and to verify the stationarity of  $(\tree(X_{\sigma_i}), (X_t)_{t\geq \sigma_i})_{i \ge1}$. The stationarity will follow from the proof of independence. Let $A\subset \mathsf{PathsInAugQTrees}$ and $B\subset \mathsf{PathsInQTrees}$. To simplify notation we write $\prstart{\cdot}{T_0}$ for the probability measure $\prcond{\cdot}{\B_K(\rho)=T_0}{}$. We then have 
\begin{align*}
&\prstart{(\tree(X_{\sigma_i}), (X_k)_{k\geq \sigma_i})\in A, (\tree_{X_{\sigma_i}}, (X_{k})_{k\leq \sigma_i})\in B}{T_0} \\
        &= \sum_t \prstart{\sigma_i=t,(\tree(X_{t}), (X_k)_{k\geq t})\in A\cap \{\tau_{X_{t-1}}^t=\infty\}, (\tree_{X_{t}},(X_k)_{k\leq t})\in B }{T_0},
\end{align*}
where $\tau^t_y=\inf \{\ell \ge t : X_{\ell}=y \}$ denotes the first hitting time of $y$ by the chain $(X_k)_{k\geq t}$ and we treat $X_{t-1}$ as the new vertex ($X_t^a$ in the above notation) we attach to $\tree(X_t)$.
We say that a time $t$ is fresh if the walk visits $X_t$ for the first time at time $t$. Let $A_{k,t}$ be the event that there are exactly~$k$ regeneration times before $t$ when we only consider the walk up to time $t$. (By this we mean that the notion of being a regeneration time is now defined with respect to the length~$t$ walk.)
Then we have
\begin{align*}
        &\prstart{\sigma_i=t,(\tree(X_{t}), (X_k)_{k\geq t})\in A\cap \{\tau_{X_{t-1}}^t=\infty\}, (\tree_{X_{t}}, (X_k)_{k\leq t})\in B }{T_0}{}\\ =& \prstart{t \text{ fresh}, A_{i-1,t}, (\tree(X_{t}), (X_k)_{k\geq t})\in A\cap \{\tau_{X_{t-1}}^t=\infty\}, (\tree_{X_{t}},(X_k)_{k\leq t})\in B }{T_0}{}\\
        = &\prcond{(\tree(X_{t}), (X_k)_{k\geq t})\in A\cap \{\tau_{X_{t-1}}^t=\infty\}}{t \text{ fresh}, A_{i-1,t}, (\tree_{X_{t}}, (X_k)_{k\leq t})\in B}{T_0}\\ &\times \prstart{t \text{ fresh}, A_{i-1,t}, (\tree_{X_{t}}, (X_k)_{k\leq t})\in B}{T_0}{} \\
        =&Q(A)  \prstart{t \text{ fresh}, A_{i-1,t}, (\tree_{X_{t}}, (X_k)_{k\leq t})\in B}{T_0}{}.
\end{align*}
Taking now the sum over all times $t$ of the last probability above gives
\begin{align*}
       & \sum_t \prstart{t \text{ fresh}, A_{i-1,t}, (\tree_{X_{t}},(X_k)_{k\leq t})\in B}{T_0}{} \\&=\sum_t\frac{\prstart{\sigma_i=t, (\tree_{X_{t}}, (X_k)_{k\leq t})\in B}{T_0}{}  }{Q(\tau_{\rho^a}=\infty)} =\frac{\prstart{(\tree_{X_{\sigma_i}},(X_k)_{k\leq \sigma_i})\in B}{T_0}{} }{Q(\tau_{\rho^a}=\infty)}.
\end{align*}
(We note that $Q(\tau_{\rho^a}=\infty)=Q(\mathsf{PathsInAugQTrees})=\prstart{ \tau_{\rho^a}=\infty}{\rho}$.)
Therefore, putting everything together gives 
\begin{align*}
&\prstart{(\tree(X_{\sigma_i}), (X_k)_{k\geq \sigma_i})\in A, (\tree_{X_{\sigma_i}}, (X_k)_{k\leq \sigma_i})\in B}{T_0}{}\\&= \frac{Q(A)}{Q(\tau_{\rho^a}=\infty)}\cdot \prstart{(\tree_{X_{\sigma_i}}, (X_k)_{k\leq \sigma_i})\in B}{T_0}{}, 
\end{align*}
and hence this proves the claimed independence. Taking $B$ to be the whole space also proves the claimed stationarity of $(\tree(X_{\sigma_i}), (X_t)_{t\geq \sigma_i})_{i \ge1}$  and confirms the description of the law of $(\tree(X_{\sigma_i}), (X_t)_{t\geq \sigma_i})$ for $i \ge 1$ described in the last sentence in the statement of the lemma. Using similar reasoning one can verify that $(\tree(X_{\sigma_1}), (X_t)_{t\geq \sigma_1})$ and $(T \setminus\tree(X_{\sigma_1}), (X_t)_{0 \le t \le \sigma_1})$  are independent (proof omitted). This completes the proof.
\end{proof}

\begin{remark}\label{rem:everyreal}
\rm{
We note that from the proof of Lemma~\ref{lem:regenerationtimes} we see that for every realisation $\mathfrak t$ of $\tree$ we have that $(\sigma_i-\sigma_{i-1})_{i\geq 1}$ and $(\phi_i-\phi_{i-1})_{i\geq 1}$ have exponential tails.
}       
\end{remark}

\begin{definition}
        \rm{
        As in Lemma~\ref{lem:regenerationtimes}, we write $\sigma_0$ for the first time that $X$ reaches $\partial \B_K(\rho)$, $(\sigma_i)_{i\geq 1}$ for the sequence of regeneration times of $X$ occurring after time $\sigma_0$ and $\phi_i$ for the depth of~$X_{\sigma_i}$ for each $i$, when we condition on the event $\B_K(\rho)=T_0$. 
        }
\end{definition}

\begin{claim}\label{cl:renewal}
Let $\tree$ be a {\quasi} with root $\rho$ as in Definition~\ref{def:tree}. Fix $K\geq 0$ and let $T_0$ be a realisation of the first $K$ levels of $\tree$. 
For each $k\in \N$ let 
\[
N_k = \max\{ i\geq 0: \phi_i\leq k+K\}
\]
be the number of regeneration times occurring before level $k+K+1$. (As always, regeneration times are defined after time $\sigma_0-1$.) Then almost surely 
\[
\frac{N_k}{k} \to\frac{1}{\E{\phi_2-\phi_1}} \ \text{ as } \ k\to\infty.
\]
Moreover, for all $\epsilon>0$, there exists $C$ sufficiently large such that for all $k\geq K^2$
        \[
        \prcond{\left| N_k - \frac{k}{\E{\phi_2-\phi_1}} \right|>C\sqrt{k}}{\B_K(\rho)=T_0}{} \leq \epsilon.
        \]
\end{claim}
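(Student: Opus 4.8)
The plan is to view $(N_k)_{k\ge 0}$ as a renewal counting process associated to the i.i.d.\ sequence of depth-increments $(\phi_{i+1}-\phi_i)_{i\ge 1}$ provided by Lemma~\ref{lem:regenerationtimes}, and then invoke the (strong law and) concentration for renewal processes driven by increments with exponential tails. More precisely, conditional on $\B_K(\rho)=T_0$, set $\psi_i=\phi_{i+1}-\phi_i$ for $i\ge 1$; by Lemma~\ref{lem:regenerationtimes} these are i.i.d., have exponential tails (in particular finite mean $m:=\E{\phi_2-\phi_1}\in(0,\infty)$), and are also independent of $\phi_1$, which itself has exponential tails. Writing $S_j=\phi_1+\sum_{i=1}^{j-1}\psi_i=\phi_j$, the quantity $N_k$ is, up to the additive constant $K$ and the shift by $\phi_1$, exactly the renewal count $N_k=\max\{j\ge 0:S_j\le k+K\}$ (with the convention $S_0$ corresponding to no regeneration having occurred yet). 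The almost sure statement $N_k/k\to 1/m$ is then the classical elementary renewal/strong law argument: from $S_{N_k}\le k+K<S_{N_k+1}$ and $S_j/j\to m$ a.s.\ (strong law of large numbers, valid since $m<\infty$ and $\phi_1<\infty$ a.s.), divide through by $N_k$ and use $N_k\to\infty$ a.s.\ (which holds because each $\psi_i<\infty$ a.s.).

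For the quantitative bound, I would argue as follows. Fix $\epsilon>0$ and a deviation scale $C\sqrt k$. The event $\{N_k\ge j\}$ equals $\{S_j\le k+K\}$, so
\[
\prcond{N_k> \tfrac{k}{m}+C\sqrt k}{T_0}{}\le \prcond{S_{\lceil k/m+C\sqrt k\rceil}\le k+K}{T_0}{},
\qquad
\prcond{N_k< \tfrac{k}{m}-C\sqrt k}{T_0}{}\le \prcond{S_{\lfloor k/m-C\sqrt k\rfloor}> k+K}{T_0}{},
\]
writing $\prcond{\cdot}{T_0}{}$ for $\prcond{\cdot}{\B_K(\rho)=T_0}{}$. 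Now $S_j=\phi_1+\sum_{i=1}^{j-1}\psi_i$ with mean $\approx jm$; for $j\approx k/m\pm C\sqrt k$ the mean $jm$ differs from $k$ by $\approx \pm Cm\sqrt k$ (up to the bounded term $\phi_1-m$ and rounding errors, which are $O(\sqrt k)$ in the relevant regime $k\ge K^2$ — here the hypothesis $k\ge K^2$ is used only to absorb the additive $K$ and the typical size of $\phi_1$ into the $C\sqrt k$ window, noting $\phi_1$ has exponential tails so $\phi_1 = O(\log k)=o(\sqrt k)$ except on an event of probability $o(1)$, or more crudely one simply takes $C$ large relative to these lower-order contributions). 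Since the $\psi_i$ have exponential tails, Bernstein's inequality (or a Chernoff bound, using that the moment generating function of $\psi_i$ is finite in a neighbourhood of $0$) gives, for a suitable constant $a=a(\Delta)>0$,
\[
\prcond{\Bigl|\sum_{i=1}^{j-1}\psi_i-(j-1)m\Bigr|>t}{T_0}{}\le 2\exp\!\Bigl(-a\min\{t^2/j,\,t\}\Bigr),
\]
uniformly in the realisation of $T_0$ (the exponential-tail constants for the $\psi_i$ do not depend on $T_0$, by Remark~\ref{rem:everyreal}). Applying this with $j\asymp k$ and $t\asymp C\sqrt k$ yields a bound of order $\exp(-a C^2)$ (plus the $o(1)$ from the $\phi_1$ contribution, or one bounds $\phi_1$ deterministically by its own Chernoff bound), which is at most $\epsilon$ once $C=C(\Delta,\epsilon)$ is chosen large enough. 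Combining the two one-sided estimates gives the claim.

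The main obstacle I anticipate is purely bookkeeping rather than conceptual: carefully matching the index shift between "$N_k$ counts regenerations with $\phi_i\le k+K$" and "$N_k=\max\{j:S_j\le k+K\}$" with $S_j=\phi_j$, and checking that the additive constants ($K$, the contribution of $\phi_1$, the $O(1)$ rounding in $\lceil\cdot\rceil,\lfloor\cdot\rfloor$) are genuinely swallowed by the $C\sqrt k$ window once $k\ge K^2$ and $C$ is large — this is where the stated hypothesis $k\ge K^2$ enters. A secondary point to be careful about is uniformity in $T_0$: one must use that the exponential-tail bounds for $(\phi_i-\phi_{i-1})_{i\ge 1}$ hold with constants independent of the realisation $T_0$ of the first $K$ levels, which is exactly the content of Remark~\ref{rem:everyreal} (and of the fact that, after the last visit to $\B_K(\rho)$, the decomposition in Lemma~\ref{lem:regenerationtimes} does not see $T_0$). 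Everything else is the standard renewal-theory toolkit.
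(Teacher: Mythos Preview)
Your approach is essentially the same as the paper's: both reduce $\{N_k>\ell\}$ and $\{N_k<\ell\}$ to deviations of the partial sums $\phi_\ell=\sum_{i\le\ell}(\phi_i-\phi_{i-1})$ from their mean and then invoke concentration, with the a.s.\ statement coming from the renewal theorem. The only substantive difference is that the paper uses Chebyshev's inequality (the i.i.d.\ structure and finite variance of the increments give $\Var(\sum_{i\le\ell}\zeta_i)\asymp\ell$, which already suffices for an $\epsilon$ bound once $C$ is large), whereas you invoke Bernstein; your route gives a sharper $e^{-aC^2}$ bound but is not needed here.

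One small inaccuracy to fix: it is not $\phi_1$ itself that has exponential tails but $\phi_1-\phi_0=\phi_1-K$ (Lemma~\ref{lem:regenerationtimes} gives tails for the \emph{increments} $\phi_i-\phi_{i-1}$, and $\phi_0=K$). So $\phi_1$ is of order $K$, not $O(\log k)$. You implicitly repair this when you say the hypothesis $k\ge K^2$ absorbs the additive $K$ into the $C\sqrt k$ window, which is exactly how the paper handles it (noting $\Var(\zeta_1)\lesssim K^2\le k$), but the sentence ``$\phi_1$ has exponential tails so $\phi_1=O(\log k)$'' should be corrected.
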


\begin{proof}[\bf Proof]
The almost sure convergence follows directly from the renewal theorem together with Lemma~\ref{lem:regenerationtimes}.

For the second statement, we only prove one bound. The other one follows in exactly the same way. Let
\[
\ell = \left\lfloor\frac{k}{\E{\phi_2-\phi_1}} + C\sqrt{k}\right\rfloor,
\]
where $C$ is a constant to be determined later. 
Set $\zeta_i=\phi_i - \phi_{i-1}$ for $i\geq 2$ and $\zeta_1=\phi_1$. We then have 
\begin{align*}
        \prcond{N_k > \ell}{\B_K(\rho)=T_0}{} = \prcond{\sum_{i=1}^{\ell} \zeta_i <k+K}{\B_K(\rho)=T_0}{}
\\ \leq  \prcond{\sum_{i=1}^{\ell} \zeta_i - \E{\sum_{i=1}^{\ell} \zeta_i} <2 \E{\zeta_2} - \E{\zeta_1} -C\E{\zeta_2}\sqrt{k}+K}{\B_K(\rho)=T_0}{}.
\\ \leq  \prcond{\sum_{i=1}^{\ell} \zeta_i - \E{\sum_{i=1}^{\ell} \zeta_i} <2 \E{\zeta_2}  -C\E{\zeta_2}\sqrt{k}}{\B_K(\rho)=T_0}{}.
\end{align*}

Since by Lemma~\ref{lem:regenerationtimes} $\zeta_2$ and $\zeta_1$ have exponential tails (in fact, since $K^2 \lesssim k$, for $\zeta_1$ it suffices to use below the bound $\Var(\zeta_1) \lesssim K^2$) and $(\zeta_i)_{i \ge 2}$ are i.i.d.\ and independent of $\zeta_1$,  using Chebyshev's inequality this last probability can be bounded by
\[
\prcond{\left|\sum_{i=1}^{\ell} \zeta_i - \E{\sum_{i=1}^{\ell}\zeta_i}\right| > C'\sqrt{k}}{\B_K(\rho)=T_0}{} 
\leq \frac{\Var(\sum_{i=1}^{\ell}\zeta_i)}{(C')^2k} \asymp\frac{\ell}{(C')^2k},
\]
for a positive constant $C'$, where the last equivalence follows again from Lemma~\ref{lem:regenerationtimes}. Taking $C$ large enough which implies that $C'$ is large, this last probability can be made smaller than $\epsilon$.
\end{proof}

\begin{lemma}\label{lem:speed}
        Let $X$ be a simple random walk on $\tree$. Then for $\nu=\frac{\E {\phi_2-\phi_1}}{\E {\sigma_2-\sigma_1}}$  almost surely
        \[
\frac{  d_\tree(\rho, X_t)}{t}\to \nu \text{ as } t\to\infty.
        \]
        Moreover, for all $\epsilon>0$ there exists a positive constant $C$ so that for all $t$ sufficiently large
        \[
        \pr{|d_\tree(\rho,X_t)-\nu t|>C\sqrt{t}}\leq \epsilon \quad \text{ and } \quad \pr{\sup_{s:\,s\leq t}d_\tree(\rho,X_s)>\nu t+ 2C\sqrt{t}}\leq \epsilon.
        \]
\end{lemma}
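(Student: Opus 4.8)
The plan is to derive both the law-of-large-numbers statement and the concentration statements from the renewal/i.i.d.\ structure established in Lemma~\ref{lem:regenerationtimes}, combined with the depth-counting estimate of Claim~\ref{cl:renewal}. The key object is the sequence of regeneration times $(\sigma_i)$ and the depths $(\phi_i)=(d_\tree(\rho,X_{\sigma_i}))$ at those times. By Lemma~\ref{lem:regenerationtimes} (with $K=0$, so $\sigma_0=0$), the increments $(\sigma_i-\sigma_{i-1})_{i\ge 1}$ are i.i.d.\ (apart from the first block, which is independent of the rest and has exponential tails), and likewise for $(\phi_i-\phi_{i-1})_{i\ge1}$; moreover both families have exponential tails, so all relevant moments are finite and in particular $\E{\sigma_2-\sigma_1}<\infty$ and $\E{\phi_2-\phi_1}<\infty$ are positive constants. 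A subtlety to flag: between consecutive regeneration times the walk moves monotonically in the long-range distance only in a weak sense, but crucially $d_\tree(\rho,X_t)$ for $\sigma_i\le t\le\sigma_{i+1}$ stays within $\phi_i$ and $\phi_{i+1}$ up to a bounded error — this follows from the definition of regeneration time (the long-range edge crossed at $\sigma_i$ is never re-crossed), so the walk never returns below level $\phi_i$ after $\sigma_i$, and has not yet passed level $\phi_{i+1}$; hence $\phi_i\le d_\tree(\rho,X_t)\le\phi_{i+1}$ for all $t\in[\sigma_i,\sigma_{i+1}]$. I would record this sandwiching inequality first.

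For the almost sure convergence, let $m(t)=\max\{i\ge 0:\sigma_i\le t\}$ be the number of regeneration times up to time $t$. The strong law of large numbers applied to the i.i.d.\ blocks gives $\sigma_i/i\to\E{\sigma_2-\sigma_1}$ and $\phi_i/i\to\E{\phi_2-\phi_1}$ almost surely, whence by the standard renewal argument $m(t)/t\to 1/\E{\sigma_2-\sigma_1}$ a.s. Combining with the sandwich $\phi_{m(t)}\le d_\tree(\rho,X_t)\le\phi_{m(t)+1}$ and $\phi_{m(t)}/m(t)\to\E{\phi_2-\phi_1}$ yields $d_\tree(\rho,X_t)/t\to \E{\phi_2-\phi_1}/\E{\sigma_2-\sigma_1}=\nu$ a.s.

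For the two concentration bounds I would run a CLT-type/Chebyshev argument at scale $\sqrt t$, exactly paralleling Claim~\ref{cl:renewal} but now indexing by time rather than by level. First, $m(t)$ concentrates: $\pr{|m(t)-t/\E{\sigma_2-\sigma_1}|>C_1\sqrt t}\le\epsilon/3$ for $C_1$ large, by writing $\{m(t)>\ell\}=\{\sum_{i=1}^\ell(\sigma_i-\sigma_{i-1})<t\}$ and applying Chebyshev to the i.i.d.\ sum (with $\var(\sigma_i-\sigma_{i-1})<\infty$ from the exponential tails), just as in Claim~\ref{cl:renewal}; the first block contributes only a bounded-variance correction. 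Second, on the event that $m(t)$ is within $C_1\sqrt t$ of its mean, the sandwich gives $|d_\tree(\rho,X_t)-\phi_{m(t)}|\le \phi_{m(t)+1}-\phi_{m(t)}$, which is an increment of an exponential-tailed i.i.d.\ sequence and so is $O(\log t)=o(\sqrt t)$ with probability $1-\epsilon/3$ (union bound over the $O(t)$ possible values of $m(t)$), while $|\phi_{m(t)}-\E{\phi_2-\phi_1}\,m(t)|\le C_2\sqrt t$ on a further event of probability $1-\epsilon/3$ by Chebyshev applied to $(\phi_i-\phi_{i-1})$; chaining these and using $|\E{\phi_2-\phi_1}m(t)-\nu t|\le \E{\phi_2-\phi_1}C_1\sqrt t$ gives $|d_\tree(\rho,X_t)-\nu t|\le C\sqrt t$ with probability $\ge 1-\epsilon$ for a suitable $C=C(\epsilon)$. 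For the running-maximum bound, since $d_\tree(\rho,X_s)$ for $s\le t$ is at most $\phi_{m(t)+1}$ by the sandwich (the walk has crossed at most $m(t)$ regeneration edges and hence reached depth at most $\phi_{m(t)+1}$), and in fact $\sup_{s\le t}d_\tree(\rho,X_s)\le \phi_{m(t)}+\max_{i\le m(t)+1}(\text{within-block depth excursion})$; the within-block excursions have exponential tails so the maximum of $O(t)$ of them is $o(\sqrt t)$ w.h.p., and $\phi_{m(t)}\le \nu t+2C\sqrt t$ follows from the previous estimates. The main obstacle I anticipate is being careful about the first (non-stationary) block and about the within-block fluctuations of the depth — verifying that the walk's depth really is pinned between $\phi_i$ and $\phi_{i+1}$ on $[\sigma_i,\sigma_{i+1}]$ (which rests on the precise definition of regeneration time) and that within-block excursions above $\phi_{i+1}$ or below $\phi_i$ cannot occur — rather than any deep probabilistic difficulty; everything else is a routine renewal-theory plus Chebyshev computation of the kind already carried out in Claim~\ref{cl:renewal}.
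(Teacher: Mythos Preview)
Your approach to the almost-sure limit and the first concentration bound (renewal structure plus SLLN/Chebyshev on the i.i.d.\ blocks) is essentially what the paper does; it simply says these ``follow easily using the regeneration structure from Lemma~\ref{lem:regenerationtimes} together with Claim~\ref{cl:renewal}''.

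One correction: the upper half of your sandwich, $d_\tree(\rho,X_t)\le\phi_{i+1}$ for $t\in[\sigma_i,\sigma_{i+1}]$, is false. Between regeneration times the walk can enter a branch, climb several levels, then backtrack completely before crossing the edge that becomes the $(i{+}1)$-th regeneration edge on a \emph{different} branch at a lower level than the walk previously reached. What is true is $d_\tree(\rho,X_t)\le\phi_i+(t-\sigma_i)\le\phi_i+(\sigma_{i+1}-\sigma_i)$; since the block lengths have exponential tails this is an $O(\log t)=o(\sqrt t)$ correction and your argument survives. You partly acknowledge this when you later invoke ``within-block depth excursions'', but the exact sandwich as first written contradicts that and should be dropped.

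For the running-maximum bound your route and the paper's genuinely differ. You bound $\sup_{s\le t}d_\tree(\rho,X_s)\le\phi_{m(t)}+\max_{i\le m(t)}(\sigma_{i+1}-\sigma_i)$ and control the max of $O(t)$ exponential-tailed increments as $O(\log t)$. The paper instead argues by drift: on the event $\{d_\tree(\rho,X_t)<\nu t+C\sqrt t\}$ (probability $\ge 1-\epsilon$ by the second claim already proved), if the walk ever reached level $\nu t+2C\sqrt t$ at some $s\le t$ it would then have to climb back $\ge C\sqrt t$ levels towards the root, which by Lemma~\ref{lem:uniformdrift} has probability $\le e^{-c\sqrt t}$; a union bound over $s\le t$ gives $t\,e^{-c\sqrt t}=o(1)$. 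Both arguments are valid; the paper's is a touch slicker because it reuses the endpoint concentration and the drift lemma directly rather than revisiting the block structure, while yours is more self-contained within the renewal framework.
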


\begin{proof}[\bf Proof]
        The first and second claims follow easily using the regeneration structure from Lemma~\ref{lem:regenerationtimes} together with Claim~\ref{cl:renewal}. 
        
                For the final claim, let $C$ be such that the first inequality holds. Then we have 
        \begin{align*}
        &\pr{\sup_{s:\,s\leq t}d_\tree(\rho,X_s)>\nu t+ 2C\sqrt{t}} \\&\leq \pr{d_\tree(\rho,X_t)<\nu t+C\sqrt{t}, \ \sup_{s:\,s\leq t}d_\tree(\rho,X_s)>\nu t+ 2C\sqrt{t}}       +\epsilon\\
        &\leq \sum_{s:\, s\leq t} \pr{d_\tree(\rho,X_s)>\nu t+ 2C\sqrt{t},\ d_\tree(\rho,X_t)<\nu t+ C\sqrt{t}}+\epsilon\\
        &\leq t \cdot\epsilon e^{-c\sqrt{t}} + \epsilon,
                \end{align*}
        where $c$ is a positive constant and where for the final inequality we used that by Lemma~\ref{lem:uniformdrift} the probability that the walk goes up $i$ levels decays exponentially in $i$. 
\end{proof}

\begin{definition}\label{def:lerw}
\rm{
Let $\tree$ be a {\quasi} as in Definition~\ref{def:tree}. 
A \textbf{loop erased random walk} $\xi$ on $\tree$ is defined as follows: we run a simple random walk on $\tree$ for infinite time and we erase loops in the chronological order in which they are created. Usually one calls the obtained random simple path the loop erased random walk, however we employ the following different convention: for each $i$ we define $\xi_i$ to be the $i$-th long range edge crossed by this loop erasure. Unless otherwise specified, the loop erased walk $\xi$ is considered with respect to a walk started from the root of $\tree$.
}       
\end{definition}

The following lemma is a direct consequence of the domain Markov property for the loop erased walk $\xi$. We state it separately, since we will refer to it several times in the following proofs.

\begin{lemma}\label{lem:domainmarkov}
Let $\tree$ be a {\quasi} and let $T_0$ be its first $M$ levels for some $M>0$.  Let
        $X$ be a simple random walk on $\tree$ (resp.\ killed when exiting~$T_0$) and  let $\xi$ be its loop erasure as in Definition~\ref{def:lerw}. Let $(e_i=(x_i,y_i))_{i \in \N}$ be long range edges satisfying $d(\rho,x_i)<d(\rho,y_i)$ and $x_{i+1}$ is in the $\tree\rball$ centred at $y_i$ for all $i$. Then for every realisation of $\tree$, setting $\gamma=\{e_1,\ldots,e_k\}$ we have for all $k$ that 
        \[
        \prcond{\xi_{k+1}=e_{k+1}}{(\xi_i)_{i\leq k}=\gamma}{} = \prstart{\til{\xi}_1=e_{k+1}}{y_{k}}=\prstart{\til{X}_L=x_{k+1}}{y_k},
        \]
        where $\til{X}$ is a simple random walk on $\tree(y_k)$ (resp.\ on $\tree(y_k)\cap T_0$) started from $y_k$ whose loop erasure is $\til{\xi}$ and $L$ is the last time (resp.\ before exiting $T_0$) that $\til{X}$ is in the $\tree\rball$ centred at $y_{k}$. 
\end{lemma}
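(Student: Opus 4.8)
The plan is to derive Lemma~\ref{lem:domainmarkov} directly from the standard domain Markov property of loop-erased random walk, once the walk is viewed through the lens of long range edges. First I would recall the combinatorial description of loop erasure: running simple random walk $X$ on $\tree$ (or on $X$ killed on exiting $T_0$) and erasing loops in chronological order produces a simple path, and by construction, for the tree structure induced by the long range edges, the sequence $(\xi_i)_{i\geq 1}$ of long range edges crossed by this loop erasure is itself a simple ``long range path'': each $\xi_i=(x_i,y_i)$ with $d(\rho,x_i)<d(\rho,y_i)$ and $x_{i+1}$ lying in the $\tree\rball$ centred at $y_i$. The key observation, which I would state carefully, is that because the long range edges of $\tree$ give it a tree structure (Definition~\ref{def:tree}), once the loop erasure has committed to crossing $e_1,\dots,e_k$, it has entered the subtree $\tree(y_k)$ and — being loop-erased and hence a simple path — can never backtrack across $e_k$; therefore the future long range edges it crosses are determined by the portion of the trajectory of $X$ that lives in $\tree(y_k)$, after the last visit of $X$ to the ball containing $y_k$ on the root side.

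Next I would make the conditioning precise. Conditioning on $(\xi_i)_{i\leq k}=\gamma=\{e_1,\dots,e_k\}$ is, by the chronological erasure rule, the same as conditioning on the event that the loop-erased path crosses exactly these long range edges in this order up to step $k$; by the domain Markov property of loop erasure (see e.g.\ the reference to~\cite{LyonsPemantlePeres} available in the excerpt, or simply the elementary chronological-erasure argument), the conditional law of the remaining loop erasure, viewed in $\tree(y_k)$, is that of the loop erasure of a simple random walk started at $y_k$ and run inside $\tree(y_k)$ (respectively $\tree(y_k)\cap T_0$ in the killed case). Hence $\prcond{\xi_{k+1}=e_{k+1}}{(\xi_i)_{i\leq k}=\gamma}{}=\prstart{\til\xi_1=e_{k+1}}{y_k}$, where $\til\xi$ is the loop erasure of $\til X$, a SRW on $\tree(y_k)$ (resp.\ $\tree(y_k)\cap T_0$) from $y_k$. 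For the second equality, I would unwind the definition of $\til\xi_1$: the first long range edge crossed by the loop erasure of $\til X$ is $e_{k+1}=(x_{k+1},y_{k+1})$ precisely when, after all loops within the $\tree\rball$ centred at $y_k$ are erased, the remaining path leaves that ball through $x_{k+1}$; chronological loop erasure inside a finite ball means the surviving exit vertex is exactly $\til X_L$, where $L$ is the last time $\til X$ is in the $\tree\rball$ centred at $y_k$ (before exiting $T_0$, in the killed case). So $\prstart{\til\xi_1=e_{k+1}}{y_k}=\prstart{\til X_L=x_{k+1}}{y_k}$.

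The main obstacle — really the only genuine content — is justifying the ``domain Markov''/restart step rigorously: that conditioning on the first $k$ long range edges of the loop erasure lets one restart a fresh simple random walk at $y_k$ confined to $\tree(y_k)$. I would handle this by the classical chronological loop-erasure decomposition: write the full trajectory of $X$ as its excursions relative to the ball containing $y_k$, note that erasing loops commutes appropriately with this decomposition because the long range tree structure forbids the surviving simple path from ever re-crossing $e_k$, and then observe that all increments of $X$ after its last visit to the $\tree\rball$ on the parent side of $y_k$ are, conditionally, distributed as a SRW in $\tree(y_k)$ started from $y_k$ (using the Markov property of $X$ and that $\tree(y_k)$ is an autonomous rooted subgraph). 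The killed version is identical with $\tree(y_k)$ replaced by $\tree(y_k)\cap T_0$ throughout, using that killing on exiting $T_0$ only affects the walk after it would leave $T_0$, hence does not interfere with the restart at $y_k$ as long as $y_k\in T_0$, i.e.\ $d(\rho,y_k)\leq M$. Since the statement asserts the identity ``for every realisation of $\tree$'', all of this is a deterministic-graph computation and no probabilistic estimate beyond elementary Markov-chain facts is needed; I would simply remark that it is a direct consequence of the domain Markov property and omit the routine bookkeeping, as the lemma's statement already advertises.
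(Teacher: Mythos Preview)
Your proposal is correct and follows essentially the same approach as the paper: the paper's own proof is a single sentence stating that the lemma follows directly from the domain Markov property of loop-erased random walk together with the tree-like structure of the quasi tree. Your write-up simply fleshes out this sketch with the details the paper chose to omit.
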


\begin{proof}[\bf Proof]

The lemma follows directly from the domain Markov property of loop erased random walk together with the ``tree-like'' structure of the {\quasi} $\tree$. 
\end{proof}

\begin{lemma}\label{lem:iidentropy}
        There exist positive constants $(C_\ell)_{\ell\geq 1}$ and $C'$ so that the following hold:
        let $\tree$ be a {\quasi} with root $\rho$ as in Definition~\ref{def:tree}. Fix $K\geq 0$ and let $T_0$ be a realisation of the first $K$ levels of $\tree$. 
        Let $X$ be a simple random walk on $\tree$ started from $\rho$ and let~$\til{\xi}$ be an independent loop erased random walk on $\tree$. For $k\geq 1$ define
        \[
        Y_k=-\log \prcond{(X_{\sigma_k-1},X_{\sigma_k})\in \til{\xi}}{X,\tree}{} + \log \prcond{(X_{\sigma_{k-1}-1}, X_{\sigma_{k-1}})\in \til{\xi}}{X,\tree}{}. 
        \]
Then the sequence $(Y_k)_{k\geq 2}$ is stationary and independent of $\B_K(\rho)$. Moreover, for all~$\ell\geq 1$ 
        \begin{align}\label{eq:tau0bound}
        \econd{\left( -\log \prcond{(X_{\sigma_0-1},X_{\sigma_0})\in \til{\xi}}{X,T}{} \right)^\ell}{\B_K(\rho)=T_0}\leq C_\ell (RK)^\ell
        \end{align}
        and for all $k\geq 2$
        \begin{align}\label{eq:momentsofy}
        \econd{(Y_k)^\ell}{\B_K(\rho)=T_0}{} \leq C_\ell \quad \text{and} \quad \E{|Y_1|^\ell}\leq C_\ell.
        \end{align}
        In addition, there exists a positive constant $C'$ so that for all $k\geq 1$ we have 
        \begin{align}\label{eq:varofsumofy}
        \vrc{\sum_{i=1}^{k} Y_i}{\B_K(\rho)=T_0} \leq C'k. 
        \end{align}
\end{lemma}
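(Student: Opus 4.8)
The plan is to turn the regeneration decomposition of Lemma~\ref{lem:regenerationtimes} together with the domain Markov property of loop erased walk (Lemma~\ref{lem:domainmarkov}) into an expression of each $Y_k$ as a functional of a single regeneration block. Write $Z_k:=-\log\prcond{(X_{\sigma_k-1},X_{\sigma_k})\in\til{\xi}}{X,\tree}{}$, so that $Y_k=Z_k-Z_{k-1}$, and fix $k\ge2$. After time $\sigma_{k-1}$ the walk never leaves $\tree(X_{\sigma_{k-1}})$, so $X_{\sigma_k}$ is a descendant of $X_{\sigma_{k-1}}$; let $e_1,\dots,e_{\phi_k}$ be the long range edges on the path from $\rho$ to $X_{\sigma_k}$, so that $e_{\phi_{k-1}}=(X_{\sigma_{k-1}-1},X_{\sigma_{k-1}})$, $e_{\phi_k}=(X_{\sigma_k-1},X_{\sigma_k})$, and $e_{\phi_{k-1}+1},\dots,e_{\phi_k}$ lie inside $\tree(X_{\sigma_{k-1}})$. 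For a loop erased walk from the root, $\{(X_{\sigma_k-1},X_{\sigma_k})\in\til{\xi}\}=\{\til{\xi}_i=e_i\text{ for }i\le\phi_k\}$ is contained in the corresponding event at level $\phi_{k-1}$, so iterating Lemma~\ref{lem:domainmarkov} gives
\[
Y_k=-\log\prstart{\til{\xi}'_1=e_{\phi_{k-1}+1},\dots,\til{\xi}'_{\phi_k-\phi_{k-1}}=e_{\phi_k}}{X_{\sigma_{k-1}}},
\]
where $\til{\xi}'$ is the loop erasure of an independent simple random walk $\til{X}'$ on $\tree(X_{\sigma_{k-1}})$ started from $X_{\sigma_{k-1}}$. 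In particular $Y_k\ge0$, and $Y_k$ is a \emph{deterministic} function $F$ of the block $\bigl(\tree(X_{\sigma_{k-1}}),(X_t)_{\sigma_{k-1}\le t\le\sigma_k}\bigr)$; hence $(Y_k,Y_{k+1},\dots)$ is one and the same fixed functional $G$ of $\bigl(\tree(X_{\sigma_{k-1}}),(X_t)_{t\ge\sigma_{k-1}}\bigr)$ for every $k\ge2$.

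Granting this, the stationarity of $(Y_k)_{k\ge2}$ and its independence of $\B_K(\rho)$ follow at once from the last bullet of Lemma~\ref{lem:regenerationtimes}: conditionally on $\B_K(\rho)=T_0$, for every $i\ge1$ the pair $\bigl(\tree(X_{\sigma_i}),(X_t)_{t\ge\sigma_i}\bigr)$ has the same law, and that law does not involve $T_0$; applying $G$ at $i=1$ and $i=2$ gives $(Y_2,Y_3,\dots)\eqdist(Y_3,Y_4,\dots)$, and at $i=1$ it gives the independence. For the moment bound \eqref{eq:momentsofy} on $Y_k$ with $k\ge2$, I bound the probability above from below by the probability that $\til{X}'$ traces, without backtracking, the shortest path in $\tree(X_{\sigma_{k-1}})$ from $X_{\sigma_{k-1}}$ to $X_{\sigma_k}$ and then, from $X_{\sigma_k}$, never returns to $X_{\sigma_k-1}$; on this event $\til{\xi}'$ is precisely the prescribed list of edges. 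The ``never return'' factor is $\ge c$ by Lemma~\ref{lem:uniformdrift} (since $X_{\sigma_k-1}=p(X_{\sigma_k})$), and the ``no backtracking'' factor is $\ge(\Delta+1)^{-L_k}$, $L_k$ being the length of that path. The estimate that makes this uniform in $R$ is $L_k\lesssim(\sigma_k-\sigma_{k-1})^2$: the walk $X$ crosses each of $e_{\phi_{k-1}+1},\dots,e_{\phi_k}$ during $[\sigma_{k-1},\sigma_k]$ and hence visits both endpoints of each, and between the entry and exit points of any $R$-ball of $\tree$ the walk realises---after contracting its excursions into child subtrees---a within-ball path of length $\le\sigma_k-\sigma_{k-1}$, while the shortest path from $X_{\sigma_{k-1}}$ to $X_{\sigma_k}$ passes through $\phi_k-\phi_{k-1}\le\sigma_k-\sigma_{k-1}$ such balls. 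Thus $0\le Y_k\lesssim1+(\sigma_k-\sigma_{k-1})^2$, and since $\sigma_k-\sigma_{k-1}$ has an exponential tail uniformly in the realisation of $\tree$ (Lemma~\ref{lem:regenerationtimes}, Remark~\ref{rem:everyreal}), $\econd{(Y_k)^\ell}{\B_K(\rho)=T_0}\le C_\ell$.

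For \eqref{eq:tau0bound} and for $Y_1$ I use instead a crude deterministic per-edge bound: given the current long range edge of a loop erasure, the next prescribed edge on a fixed ray is crossed with probability at least $c_0^R$ (the walk goes directly to its near endpoint in $\le R$ steps, crosses it, and then escapes, using Lemma~\ref{lem:uniformdrift}). Chaining at most $K+1$ such steps gives $Z_0=-\log\prcond{(X_{\sigma_0-1},X_{\sigma_0})\in\til{\xi}}{X,\tree}{}\le CR(K+1)$ deterministically, which yields \eqref{eq:tau0bound}; combined with $Z_1\lesssim R\phi_1$ and the exponential tail of $\phi_1-K$, this bounds the moments of $|Y_1|=|Z_1-Z_0|$.

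Finally, \eqref{eq:varofsumofy} is where the only real work lies. Since $\sum_{i=1}^kY_i=Z_k-Z_0$ and $(Y_k)_{k\ge2}$ is stationary with bounded second moment, it suffices to establish summable decay of correlations, $\sum_{m\ge0}\sup_{T_0}\bigl|\cov(Y_2,Y_{2+m}\mid\B_K=T_0)\bigr|<\infty$, the contributions of $Y_1$ and $Z_0$ being absorbed using the previous paragraph. Unlike in the Galton--Watson case of \cite{BLPS}, the $Y_k$ are not functions of disjoint regeneration blocks, since the loop erased walk defining $Z_k$ may in principle probe $\tree$ to an arbitrary depth; the crux is to show it typically does not. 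Concretely, $Y_2=F\bigl(\tree(X_{\sigma_1}),(X_t)_{[\sigma_1,\sigma_2]}\bigr)$, and its dependence on the part of $\tree(X_{\sigma_1})$ lying $m$ or more long range levels below $X_{\sigma_1}$ is felt only through the event that the loop erased walk(s) occurring in its definition descend that far before becoming committed past the levels they govern; by Lemma~\ref{lem:uniformdrift} (each later-undone level of descent costs a factor $\le1-c$) and the exponential tail of $\phi_2-\phi_1$, this event has probability $\lesssim e^{-c'm}$. On its complement $Y_2$ equals a random variable $Y_2^{(m)}$ that depends only on $\tree$ within $m$ long range levels of $X_{\sigma_1}$---equivalently, only on the first $m$ of the i.i.d.\ regeneration blocks below $X_{\sigma_1}$---and hence, by Lemma~\ref{lem:regenerationtimes}, is independent of $Y_{2+m}$, which is built from the $m$-th and deeper such blocks. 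Therefore
\[
\cov(Y_2,Y_{2+m}\mid\B_K=T_0)=\cov\bigl(Y_2-Y_2^{(m)},\,Y_{2+m}\mid\B_K=T_0\bigr)\ \lesssim\ \bigl\|Y_2-Y_2^{(m)}\bigr\|_2\ \lesssim\ e^{-c'm/4},
\]
using Cauchy--Schwarz, then H\"older together with $\pr{Y_2\ne Y_2^{(m)}}\lesssim e^{-c'm}$ and the moment bounds already proved. Summing over $i,j\le k$ gives $\vrc{\sum_{i=2}^kY_i}{\B_K(\rho)=T_0}\le C'k$, and incorporating $Y_1$ finishes the proof. The main obstacle is precisely this long range dependence of the $Y_k$ through the auxiliary loop erased walk, which has no analogue in the tree case and must be controlled quantitatively.
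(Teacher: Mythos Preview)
Your overall strategy—expressing $Y_k$ via the domain Markov property as a functional of a single regeneration ``slab'', deducing stationarity from Lemma~\ref{lem:regenerationtimes}, and then controlling covariances by a truncated approximant—is exactly the paper's approach. Your moment bound for $Y_k$, $k\ge2$, via the pointwise estimate $Y_k\lesssim 1+(\sigma_k-\sigma_{k-1})$ (the graph distance from $X_{\sigma_{k-1}}$ to $X_{\sigma_k}$ is at most $\sigma_k-\sigma_{k-1}$, so your quadratic bound is more than enough) is in fact cleaner than the paper's $A_r$-decomposition, and the treatment of $Z_0$ and $Y_1$ is essentially the same as the paper's.

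The variance argument, however, has a genuine gap. You write that on the complement of a rare event ``$Y_2$ equals a random variable $Y_2^{(m)}$'' and then invoke $\pr{Y_2\ne Y_2^{(m)}}\lesssim e^{-c'm}$. But $Y_2=-\log Z_2$ where $Z_2=\prcond{X_{\sigma_2}\in\xi(2)}{X,\tree}{}$ is an \emph{integral over all trajectories} of the auxiliary walk generating $\xi(2)$; some of those trajectories do descend past level $m$ and return, so any truncation $Z_2^{(m)}$ measurable with respect to the first $m$ levels differs from $Z_2$ almost surely, not just on a rare event. The event ``the auxiliary walk goes deep'' is an event about that walk, not about $(X,\tree)$, so it cannot serve as the set $\{Y_2\ne Y_2^{(m)}\}$. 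What actually holds is a \emph{quantitative} bound $|Z_2-Z_2^{(m)}|\le\prcond{\text{return after level }m}{X,\tree}{}\le e^{-c(m-(\phi_2-\phi_1))}$, and to pass from this to $|Y_2-Y_2^{(m)}|=|\log Z_2-\log Z_2^{(m)}|$ you need a lower bound on $Z_2\wedge Z_2^{(m)}$. The paper supplies this missing ingredient by introducing the event $B(i,j)=\{d_g(X_{\sigma_i},X_{\sigma_{i-1}})\le (j-i)/C\}$, on which $Z_i\wedge Z_{i,j}\ge c(\Delta+1)^{-(j-i)/C}$, so that $|Y_i-Y_{i,j}|\1(B(i,j))\le e^{-c'(j-i)}$ while $\pr{B(i,j)^c}\le e^{-c_1(j-i)/C}$. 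Once you insert this step (or equivalently replace your H\"older bound by a direct pointwise bound on $|Y_2-Y_2^{(m)}|$ on a good event), the rest of your covariance computation goes through as written.
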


\begin{proof}[\bf Proof]
  To simplify notation we identify $X_{\sigma}$ with the long range edge $(X_{\sigma-1}, X_\sigma)$. 
Recall that the regeneration times were defined to be the times when a long range edge is crossed for the first and last time. This definition together with the fact that $\til{\xi}$ is only considered when the loop erasure crosses long range edges give that if for some $k\geq 2$ we have $X_{\sigma_k}\in \til{\xi}$, then also $X_{\sigma_{k-1}}\in \til{\xi}$. Using this and recalling that $\phi_k$ is the depth of $X_{\sigma_k}$ we obtain
\begin{align*}
        \prcond{X_{\sigma_k}\in \til{\xi}}{X,\tree}{} &=        \prcond{X_{\sigma_k}\in \til{\xi}, X_{\sigma_{k-1}}\in \til{\xi}}{X,\tree}{} \\
        &= \econd{\prcond{X_{\sigma_k}\in \til{\xi}, X_{\sigma_{k-1}}\in \til{\xi}}{X, \tree, (\til{\xi}_\ell)_{\ell\leq \phi_{k-1}}}{}}{X,\tree}\\
        &=\econd{\1(X_{\sigma_{k-1}}\in\til{\xi}) \prcond{X_{\sigma_k}\in \til{\xi}}{X, \tree, (\til{\xi}_\ell)_{\ell\leq \phi_{k-1}}}{}  }{X,\tree}.
\end{align*}
Using Lemma~\ref{lem:domainmarkov} we obtain
\begin{align*}
        \1(X_{\sigma_{k-1}}=\til{\xi}_{\phi_{k-1}}) &\prcond{X_{\sigma_k}\in \til{\xi}}{X, \tree, (\til{\xi}_\ell)_{\ell\leq \phi_{k-1}}}{}  \\= &\1(X_{\sigma_{k-1}}=\til{\xi}_{\phi_{k-1}}) \prcond{X_{\sigma_k}\in \xi(k)}{X,\tree}{},
\end{align*}
where $\xi(k)=(\xi(k)_i)_{i\geq 0}$ is a loop erased random walk on the subgraph $\tree(X_{\sigma_{k-1}})$ started from its root, $X_{\sigma_{k-1}}$, and evolves independently of $X$. Therefore, we obtain
\begin{align*}
        \prcond{X_{\sigma_k}\in \til{\xi}}{X,\tree}{} = \prcond{X_{\sigma_k}\in \xi(k)}{X,\tree}{}\prcond{X_{\sigma_{k-1}}\in \til{\xi}}{X,\tree}{},
\end{align*}
and hence this gives for all $k\geq 2$
\[
Y_k= -\log\prcond{X_{\sigma_k}\in \xi(k)}{X,\tree}{} = -\log \prcond{X_{\sigma_k}\in \xi(k)}{(X_t)_{t\geq \sigma_{k-1}},\tree(X_{\sigma_{k-1}})}{} . 
\]
Since $Y_k$ is a measurable function of $((X_t)_{t\geq \sigma_{k-1}},\tree(X_{\sigma_{k-1}}))$, using Lemma~\ref{lem:regenerationtimes} we conclude that even conditional on $\{\B_K(\rho)=T_0\}$, the sequence $(Y_k)_{k\geq 2}$ is stationary. 

We now prove the bound on the moments of~$Y_2$. The moments of $|Y_1|$ can be bounded using similar arguments. Let $\til{X}$ be a simple random walk on $\tree(X_{\sigma_1})^a$ started from $X_{\sigma_1}$ and conditioned on never visiting $X_{\sigma_1}^a$.  Let $\til{\sigma}_1$ be the first regeneration time of~$\til{X}$, i.e.\ the first time that $\til{X}$ crosses a long range edge for the first and last time. It is convenient to identify $X_{\sigma_1}^a$ with the parent of $X_{\sigma_1}$ in $T$, so that $\til{X}$ is a walk on a subgraph of $T$ (one can even define  $\til{X}_k=X_{\sigma_1+k}$ for all $k \ge 0$, and then $\til{X}_{\til{\sigma}_1}=X_{\sigma_2}$). By Lemma~\ref{lem:regenerationtimes} we get

%
%
%
\begin{align*}
        &\econd{(Y_2)^\ell}{\B_K(\rho)=T_0} \\&= \E{\sum_{x\in \tree(X_{\sigma_1})}   \prcond{\til{X}_{\til{\sigma}_1}=x}{\tree(X_{\sigma_1})}{}\left( -\log \prcond{x\in \xi(2)}{\tree(X_{\sigma_1})}{}    \right)^\ell} 
        \end{align*}
and similarly
\begin{align*}
        & \econd{\left( -\log \prcond{X_{\sigma_0}\in \til{\xi}}{X,T}{} \right)^\ell}{\B_K(\rho)=T_0} \\&=\econd{ \sum_{x\in \partial T_0}  \prcond{{X}_{\sigma_0}=x}{\B_K(\rho)=T_0}{}\left( -\log \prcond{\til{\xi}_{K-1}=x}{\tree}{}    \right)^\ell}{\B_K(\rho)=T_0},
\end{align*}
where we write $\partial T_0=\{x: d(\rho,x)=K\}$. Write $\prstart{\cdot}{\mathfrak t}$ for the probability measure when $\tree=\mathfrak{t}$. Abusing notation, when considering $\til{X}$ and $\xi(2)$ we also write  $\prstart{\cdot}{\mathfrak t}$ for the probability measure when $\tree(X_{\sigma_1})$ is given by $\mathfrak t$.  It suffices to prove that for all $\ell\in \N$ there exists a positive constant $C$ so that for every realisation $\mathfrak t$ of $\tree(X_{\sigma_1})$ and every realisation $\mathfrak t'$ of $\tree$ with~$\B_K(\rho)=T_0$ we have 
\begin{align}\label{eq:goalfixedtree}
        &\sum_{x\in\mathfrak t}  \prstart{\til{X}_{\til{\sigma}_1}=x}{\mathfrak t}\left( -\log \prstart{x\in \xi(2)}{\mathfrak t}    \right)^\ell\leq C  \\ \label{eq:goalsecondfixed}&\sum_{x\in\mathfrak t'}  \prstart{{X}_{\sigma_0}=x}{\mathfrak t'}\left( -\log \prstart{\til{\xi}_{K-1}=x}{\mathfrak t'}    \right)^\ell\leq C (RK)^\ell,
\end{align}
where $x\in \mathfrak t$ (resp.\ $x\in \mathfrak t'$) ranges over long range edges of $\mathfrak t$ (resp.\ $\mathfrak t'$).
We start by proving~\eqref{eq:goalfixedtree}. 
Let $X'$ be a simple random walk on $\mathfrak t^a$ started from the root $\rho'$ of~$\mathfrak t$    We denote the first regeneration time of $X'$ by $\sigma_1'$. Note that it suffices to prove~\eqref{eq:goalfixedtree} for the walk $X'$, since using the definition of $\til{X}$ we obtain for a positive constant $c$ that 
\begin{align*}
                \prstart{\til{X}_{\til{\sigma}_1}=x}{\mathfrak t} \leq \frac{\prstart{{X'}_{\sigma_1'}=x}{\mathfrak t}}{\prcond{\tau_{(\rho')^a}=\infty}{X_0'=\rho'}{\mathfrak t}} \leq \frac{1}{c}\cdot \prstart{{X}_{\sigma_1'}'=x}{\mathfrak t},
\end{align*}
where the last inequality follows from Lemma~\ref{lem:uniformdrift}. 

We write $d_g(a,b)$ for the graph distance between $a$ and $b$ in the graph $\mathfrak t$, i.e.\ not counting only the long range edges as for $d(a,b)$. 
For every $r$ we set 
\[
A_{r} = \{w=(w_1,w_2)\in \mathfrak t:  d_g(\rho, w_1) =r\},
\]
where again $w$ ranges over long range edges of $\mathfrak t$.
Then we have 
\begin{align*}
       &\sum_{x\in\mathfrak t}  \prstart{{X}_{\sigma'_1}'=x}{\mathfrak t}\left( -\log \prstart{x\in \xi(2)}{\mathfrak t}    \right)^\ell \\&=  \sum_r \sum_{w\in A_{r}}  \prstart{{X}_{\sigma'_1}'=w}{\mathfrak t}\left( -\log \prstart{w\in \xi(2)}{\mathfrak t}    \right)^\ell.
\end{align*}
 The proof of~\eqref{eq:goalfixedtree} will be complete once we show the existence of two positive constants $c_1$ and~$c_2$ so that for all $r$ and all $w\in A_{r}$ 
 \begin{align}\label{eq:newgoaltreefixed}
        \prstart{w\in \xi(2)}{\mathfrak t} \geq c_{1} e^{-c_1 r} \quad \text{ and } \quad \prstart{X_{\sigma'_1}' \in A_{r}}{\mathfrak t}\leq e^{-c_2r}.
 \end{align}
For the first bound, take a path of vertices that connect $\rho$ to $w$. The probability that this is the path taken by the walk that generates the loop erasure is at least $e^{-c_1 r}$ for a positive constant~$c_1$. Indeed, this follows from the bounded degree assumption. Now, by Lemma~\ref{lem:uniformdrift}, once $w$ is reached by the walk, the probability that it is in $\xi(2)$ is at least $c_1$. 
For the second bound in~\eqref{eq:newgoaltreefixed}, using that $\sigma'_1$ has exponential tails from Remark~\ref{rem:everyreal} we have 
\[
\prstart{X'_{\sigma'_1} \in A_{r}}{\mathfrak t}\leq \prstart{\sigma'_1\geq r}{\mathfrak t} \leq e^{-c_2 r}
\]
for a positive constant $c_2$.

For the proof of~\eqref{eq:goalsecondfixed}, note that $\prstart{\til{\xi}_{K-1}=x}{\mathfrak t'} \geq e^{-c_3 RK}$ for a positive constant $c_3$, since we can take a path of long range edges of length $K$ and require that the walk creating the loop erasure takes this path and then escapes, similarly to the proof of the first inequality in~\eqref{eq:newgoaltreefixed}. So we  now get that 
\[
\sum_{x\in\mathfrak t'}  \prstart{{X}_{\sigma_0}=x}{\mathfrak t}\left( -\log \prstart{\til{\xi}_{K-1}=x}{\mathfrak t'}    \right)^\ell \lesssim (RK)^\ell \sum_{x\in \mathfrak t'}\prstart{X_{\sigma_0}=x}{\mathfrak t'} = (RK)^\ell.
\]
It remains to prove~\eqref{eq:varofsumofy}. To simplify notation, we write $\prstart{\cdot}{T_0}$ for the probability measure conditional on $\B_K(\rho)=T_0$ and similarly $\estart{\cdot}{T_0}$, $\rm{Var}_{T_0}$ and $\cov_{T_0}$. With this notation we have 
 \[
 {\rm{Var}}_{T_0}\left(\sum_{i=1}^{k}  Y_i \right) = \sum_{i=1}^{k} \Var_{T_0}(Y_i) + 2\sum_{i<j} \cov_{T_0}( Y_i, Y_j).
 \]
 Using~\eqref{eq:momentsofy} we get that $
 \sum_{i=1}^{k} \Var_{T_0}(Y_i) \lesssim k$,
 and hence it suffices to prove that 
 \begin{align}\label{eq:decorrelateyi}
        \sum_{i<j} \cov_{T_0}( Y_i, Y_j) \lesssim k.
 \end{align}
In order to prove this, for $j>i$ we are going to define random variables $Y_{i,j}$ and events $B(i,j)$ so that 
\begin{enumerate}
        \item[(i)] $Y_{i,j}\1(B(i,j))$ and $B(i,j)$ are independent of $Y_j$,
        \item[(ii)] $\prstart{B(i,j)^c}{T_0}\leq e^{-c(j-i)}$ for a positive constant $c$ and
        \item[(iii)] $|Y_i-Y_{i,j}|\1(B(i,j)) \leq e^{-c' (j-i)}$ for another positive constant $c'$.
\end{enumerate}
Therefore, assuming that we have defined $Y_{i,j}$ and $B(i,j)$ satisfying the above conditions we can finish the proof, since
\begin{align*}
        \cov_{T_0}(Y_i,Y_j) = &\estart{(Y_i-\estart{Y_i}{T_0})(Y_j-\estart{Y_j}{T_0})\1(B(i,j))}{T_0} \\+& \estart{(Y_i-\estart{Y_i}{T_0})(Y_j-\estart{Y_j}{T_0})\1(B^c(i,j))}{T_0}\\ \lesssim &
        \estart{(Y_i-\estart{Y_i}{T_0})(Y_j-\estart{Y_j}{T_0})\1(B(i,j))}{T_0} + e^{-c_1(j-i)/(2C)}\\
         = &\estart{(Y_i- Y_{i,j})Y_j\1(B(i,j))}{T_0} - \estart{(Y_i-Y_{i,j})\1(B(i,j))}{T_0}\estart{Y_j}{T_0} \\&+ e^{-c_1(j-i)/(2C)},
\end{align*}
where for the inequality we used Cauchy Schwarz together with~\eqref{eq:momentsofy} and (ii) and for the last equality we used (i). Using~\eqref{eq:momentsofy} and (iii) gives
\begin{align*}
\estart{(Y_i- Y_{i,j})Y_j\1(B(i,j))}{T_0} \lesssim e^{-c''(j-i)} \ \text{ and }  \\  \estart{(Y_i-Y_{i,j})\1(B(i,j))}{T_0}\estart{Y_j}{T_0}\lesssim e^{-c''(j-i)}.
\end{align*}
Taking the sum over $j>i$ yields~\eqref{eq:decorrelateyi} and finishes the proof. So we now turn to define $Y_{i,j}$ and $B(i,j)$ for $j>i$.
 
For each $i$ let $X^i$ be the walk that generates the loop erased path $\xi(i)$, i.e.\ $X^i$ is a simple random walk in the subtree $\tree(X_{\sigma_{i-1}})$ started from $X_{\sigma_{i-1}}$ and $\xi(i)$ is obtained by only considering the times when $X^i$ crosses long range edges and erasing loops in the chronological order in which they are created. Now for $i<j$ we let $\xi(i,j)$ be the loop erased path (across long range edges) obtained from the path $X^i$ when we run it until the first time that $X^i$ reaches the level of $X_{\sigma_{j-1}}$.  We set 
\begin{align*}
Z_i =\prcond{X_{\sigma_i}\in \xi(i)}{\tree(X_{\sigma_{i-1}}), X}{T_0},  \ & Z_{i,j} =\prcond{X_{\sigma_i}\in \xi(i,j)}{\tree(X_{\sigma_{i-1}}), X}{T_0} \\ &\text{ and } \ Y_{i,j}= -\log Z_{i,j}.
\end{align*}
Note that by the definition of $\xi(i,j)$ we have that 
\[
Z_{i,j} =\prcond{X_{\sigma_i}\in \xi(i,j)}{\tree(X_{\sigma_{i-1}})\setminus \tree(X_{\sigma_{j-1}}), (X_t)_{t=\sigma_{i-1}}^{\sigma_{j-1}}}{T_0}
\]
Let $A(i,j)$ be the event that $X^i$ returns to  $X_{\sigma_i}$ after reaching the level of $X_{\sigma_{j-1}}$ for the first time. Then we have 
\begin{align*}
|Z_i - Z_{i,j}| = |&\prcond{X_{\sigma_i}\in \xi(i)}{\tree(X_{\sigma_{i-1}}),X}{T_0}  - \prcond{X_{\sigma_i}\in \xi(i,j)}{\tree(X_{\sigma_{i-1}}),X}{T_0}| \\
= |&\prcond{X_{\sigma_i}\in \xi(i), X_{\sigma_i}\notin \xi(i,j)}{\tree(X_{\sigma_{i-1}}),X}{T_0}  
\\&-\prcond{X_{\sigma_i}\notin \xi(i), X_{\sigma_i}\in \xi(i,j)}{\tree(X_{\sigma_{i-1}}),X}{T_0}|\\
&\leq \prcond{A(i,j)}{\tree(X_{\sigma_{i-1}}), X}{T_0}.
\end{align*}
Using Lemma~\ref{lem:uniformdrift} we obtain that there exists a positive constant $c$ so that 
\[
\prcond{A(i,j)}{\tree(X_{\sigma_{i-1}}),X}{T_0} \leq e^{-c(j-i-1)}.
\]
Using that $|\log x - \log y|\leq |x-y|/(x\wedge y)$ we now obtain \begin{align*}
        |Y_{i,j}-Y_i|=|\log Z_{i,j}-\log Z_i | \leq \frac{|Z_{i,j} - Z_i|}{Z_{i,j}\wedge Z_i}
\end{align*}
Let $B(i,j)=\{ d_g(X_{\sigma_i},X_{\sigma_{i-1}}) \leq \lfloor(j-i)/C\rfloor\} $ for a large positive constant $C$. On $B(i,j)$ we have 
\[
Z_{i,j}\wedge Z_i \geq c (\Delta+1)^{-(j-i)/C},
\]
where $c$ is the positive constant from Lemma~\ref{lem:uniformdrift} and $\Delta$ is the maximum degree. Indeed, the right hand side above is a lower bound on the probability that $X^i$ visits $X_{\sigma_i}$ without backtracking until the first such visit and then escapes. Therefore, choosing $C$ sufficiently large we get that 
\begin{align}\label{eq:boundondiftimesb}
| Y_{i,j}-Y_i |\1(B(i,j)) \leq e^{-c'' (j-i)},
\end{align}
where $c''$ is a positive constant. Using next Lemma~\ref{lem:regenerationtimes} we get that for a positive constant $c_1$ 
\begin{align}\label{eq:boundonbij}
\pr{B(i,j)^c}\leq e^{-c_1 \lfloor(j-i)/C\rfloor}.
\end{align}
Finally we note that $Y_{i,j}\1(B(i,j))$ and $B(i,j)$ are independent of~$Y_j$, since they depend on independent parts of the tree by the definition of regeneration times. This finishes the proof.
\end{proof}

\begin{proposition}\label{pro:entropy}
        Let $\tree$ be a {\quasi} as in Definition~\ref{def:tree} and let $\xi$ and $\til{\xi}$ be two independent loop erased random walks on $\tree$ both started from the root. Then there exists a positive constant~$\mathfrak{h}=\mathfrak{h}_n$ so that almost surely
        \begin{align*}
                \frac{-\log \prcond{\xi_k\in \til{\xi}}{T,\xi}{} }{k}\to \mathfrak{h} \text{ as } k\to\infty.
        \end{align*}
        Fix $K\geq 0$ and let $T_0$ be a realisation of the first $K$ levels of $\tree$. For all $\epsilon>0$, there exists a positive constant $C$ so that for all $k\geq(R K)^2$
        \begin{align*}
                \prcond{\left|-\log \prcond{\xi_k\in \til{\xi}}{T,\xi}{} -\mathfrak{h}k\right|>C\sqrt{k}}{\B_K(\rho)=T_0}{}\leq \epsilon.
        \end{align*}
\end{proposition}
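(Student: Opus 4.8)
The plan is to reduce everything to the additive structure provided by Lemma~\ref{lem:iidentropy}. First I would rewrite $-\log\prcond{\xi_k\in\til\xi}{T,\xi}{}$ along the regeneration skeleton. Let $\phi_i$ and $\sigma_i$ be as in Lemma~\ref{lem:regenerationtimes} (applied with $K$ as in the statement), and recall $N_k=\max\{i\ge0:\phi_i\le k+K\}$ from Claim~\ref{cl:renewal}. Since the loop-erased walk crosses long range edges in order, the event $\{\xi_k\in\til\xi\}$ and the partial sums $\sum_{i\le N_k}Y_i$ are tied together: writing $S_m=-\log\prcond{(X_{\sigma_m-1},X_{\sigma_m})\in\til\xi}{X,T}{}=-\log\prcond{X_{\sigma_0}\in\til\xi}{X,T}{}+\sum_{i=1}^m Y_i$ for $m\ge 1$, I would argue that $-\log\prcond{\xi_k\in\til\xi}{T,\xi}{}$ differs from $S_{N_k}$ (more precisely, from its analogue for the walk whose loop-erasure is $\xi$, by symmetry of the two walks — one supplies the edge, the other the loop-erased path) by an error that is controlled using Lemma~\ref{lem:uniformdrift}: between levels $\phi_{N_k}$ and $k$, only $O(1)$ extra long range edges are crossed in expectation, and the probability of reaching level $k$ from the $N_k$-th regeneration point and then hitting the relevant point, versus doing so from the regeneration edge, differs by a bounded multiplicative factor with exponential tails. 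This is essentially the argument already used inside the proof of Lemma~\ref{lem:iidentropy} to pass from $Z_i$ to $Z_{i,j}$, together with the fact (Lemma~\ref{lem:uniformdrift}, Remark~\ref{rem:decayofdistance}) that overshoot past level $k$ has an exponential tail.

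Next I would apply the law of large numbers. By Lemma~\ref{lem:iidentropy}, $(Y_i)_{i\ge2}$ is stationary, and from the proof it is in fact a function of the i.i.d.\ blocks $(\tree(X_{\sigma_{i-1}})\setminus\tree(X_{\sigma_i}),(X_t)_{\sigma_{i-1}\le t\le\sigma_i})$ of Lemma~\ref{lem:regenerationtimes}, hence i.i.d.\ for $i\ge2$; with $\E{|Y_1|^\ell},\E{|Y_i|^\ell}\le C_\ell$ we get $S_m/m\to\E{Y_2}=:\mathfrak b$ a.s. Combining with Claim~\ref{cl:renewal}, which gives $N_k/k\to 1/\E{\phi_2-\phi_1}=:\lambda$ a.s., yields
\[
\frac{-\log\prcond{\xi_k\in\til\xi}{T,\xi}{}}{k}=\frac{S_{N_k}}{N_k}\cdot\frac{N_k}{k}+o(1)\longrightarrow \lambda\,\mathfrak b=:\mathfrak h\quad\text{a.s.},
\]
which is the first assertion; positivity of $\mathfrak h$ follows because each $Y_i\ge0$ and $\E{Y_2}>0$ (the loop-erased walk from a regeneration point hits a prescribed far-away long range edge with probability bounded away from $1$, by the bounded-degree lower bound combined with the positive escape probability of Lemma~\ref{lem:uniformdrift}).

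For the concentration statement, the strategy is a two-scale fluctuation bound conditional on $\B_K(\rho)=T_0$. Write
\[
-\log\prcond{\xi_k\in\til\xi}{T,\xi}{}-\mathfrak h k=\Big(S_{N_k}-\E{}[S_{N_k}\mid N_k]\Big)+\Big(\E{}[S_{N_k}\mid N_k]-\mathfrak b N_k\Big)+\mathfrak b\big(N_k-\lambda k\big)+\big(\lambda\mathfrak b-\mathfrak h\big)k+\text{error},
\]
where the last genuine term vanishes by definition of $\mathfrak h$. The $N_k$-fluctuation term is $O(\sqrt k)$ with probability $\ge1-\epsilon/4$ by Claim~\ref{cl:renewal} for $k\ge K^2$. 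Conditionally on $N_k$, the first term has conditional variance $O(N_k)=O(k)$ by the variance bound \eqref{eq:varofsumofy}, so Chebyshev gives $O(\sqrt k)$ up to the $S_0:=-\log\prcond{X_{\sigma_0}\in\til\xi}{X,T}{}$ contribution, whose $\ell$-th moment is $O((RK)^\ell)$ by \eqref{eq:tau0bound} — here the hypothesis $k\ge(RK)^2$ is exactly what makes this negligible relative to $\sqrt k$. The second term is handled by noting $\E{}[Y_i\mid\B_K(\rho)=T_0]=\mathfrak b$ for $i\ge2$ (stationarity plus independence of $\B_K$, from Lemma~\ref{lem:iidentropy}) so this term is $O(1)$. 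Finally the error from replacing $-\log\prcond{\xi_k\in\til\xi}{T,\xi}{}$ by $S_{N_k}$ is, with probability $\ge1-\epsilon/4$, of order $O(\log k)=o(\sqrt k)$ by the exponential-tail overshoot estimate. Collecting the pieces and choosing $C$ large gives the claim.

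\textbf{Main obstacle.} The delicate step is the first one: rigorously identifying $-\log\prcond{\xi_k\in\til\xi}{T,\xi}{}$ with $S_{N_k}$ up to a small controlled error, because $\xi_k$ lives at a deterministic level $k$ while the regeneration structure lives at the random levels $\phi_i$, and one must also keep track of which of the two walks ($\xi$ or $\til\xi$) the regeneration times are taken with respect to — the cleanest route is to observe, via Lemma~\ref{lem:domainmarkov} and the domain Markov property, that $\prcond{\xi_k\in\til\xi}{T,\xi}{}$ factorizes along the regeneration edges of the $\til\xi$-walk restricted to $\xi$, exactly as in the telescoping computation at the start of the proof of Lemma~\ref{lem:iidentropy}, and then absorb the between-$\phi_{N_k}$-and-$k$ discrepancy using the escape estimates of Lemma~\ref{lem:uniformdrift}. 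Everything downstream is then a routine LLN-plus-Chebyshev argument built on the already-established i.i.d.\ decomposition.
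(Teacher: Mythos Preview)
Your overall strategy matches the paper's: reduce to the additive structure of Lemma~\ref{lem:iidentropy}, use an LLN on $\sum Y_i$ together with Claim~\ref{cl:renewal} for the almost sure limit, and combine the variance bound~\eqref{eq:varofsumofy} with Chebyshev and the concentration of $N_k$ for the fluctuation estimate. Two points deserve correction or simplification.

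First, what you flag as the ``main obstacle'' --- passing from the regeneration levels $\phi_i$ to a deterministic level $k$ --- is handled in the paper by two observations you have not made explicit, and which remove the need for any error analysis via escape estimates. Since a regeneration edge is by definition crossed exactly once, it lies on the loop erasure of $X$; hence one has the \emph{exact} identity $\xi_{\phi_m}=(X_{\sigma_m-1},X_{\sigma_m})$, so $S_m=-\log\prcond{\xi_{\phi_m}\in\til\xi}{X,T}{}$ with no discrepancy at all. Then, because $\{\xi_i\in\til\xi\}\subseteq\{\xi_j\in\til\xi\}$ for $j<i$, the map $k\mapsto -\log\prcond{\xi_k\in\til\xi}{T,\xi}{}$ is nondecreasing, and one simply sandwiches
\[
-\log\prcond{\xi_{\phi_{N_k}}\in\til\xi}{T,\xi}{}\ \le\ -\log\prcond{\xi_k\in\til\xi}{T,\xi}{}\ \le\ -\log\prcond{\xi_{\phi_{N_k+1}}\in\til\xi}{T,\xi}{}.
\]
Both endpoints are of the form $S_{N_k}$ or $S_{N_k+1}$, and Claim~\ref{cl:renewal} controls $N_k$. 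This replaces your proposed ``between-$\phi_{N_k}$-and-$k$ discrepancy'' argument entirely and also clarifies the role of the two walks: the regeneration times are those of the walk $X$ generating $\xi$, and the identity above is what links $X$ to $\xi$.

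Second, your claim that $(Y_i)_{i\ge2}$ are i.i.d.\ is not correct: $Y_k$ is a function of $\bigl((X_t)_{t\ge\sigma_{k-1}},\tree(X_{\sigma_{k-1}})\bigr)$, which involves the entire subtree below $X_{\sigma_{k-1}}$, not just a single i.i.d.\ block. Lemma~\ref{lem:iidentropy} only asserts stationarity (and the paper invokes the ergodic theorem, not the strong law for i.i.d.\ sums); the exponential decorrelation established in the proof of~\eqref{eq:varofsumofy} is precisely what compensates for the lack of independence. This does not break your argument, but you should invoke the ergodic theorem rather than the i.i.d.\ SLLN.
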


\begin{proof}[\bf Proof]
Again to simplify notation we write $\prstart{\cdot}{T_0}$ for the probability measure $\prcond{\cdot}{\B_K(\rho)=T_0}{}$.
Let $X$ be the simple random walk on $\tree$ that generates the loop erasure $\xi$ and let $(\sigma_k)_{k \ge 1}$ be its regeneration times after time $\sigma_0$ and $\sigma_0$ be the hitting time of $\partial T_0$ as in Lemma~\ref{lem:regenerationtimes}. Then we get 
\begin{align*}
        -\log \prcond{(X_{\sigma_k-1}, X_{\sigma_k})\in \til{\xi}}{X,\tree}{} = \sum_{i=1}^{k} Y_i - \log \prcond{(X_{\sigma_0-1},X_{\sigma_0})\in \til{\xi}}{X,\tree}{},
        \end{align*} 
        where $Y_i$ are the variables of Lemma~\ref{lem:iidentropy} and which are stationary for $i\geq 2$ and $\E{|Y_i|}\leq  C$ for all~$i\geq 1$. Therefore, applying the ergodic theorem and using also~\eqref{eq:tau0bound} we deduce that there exists a constant~$\gamma$ so that almost surely
\[
-\frac{\log\prcond{(X_{\sigma_{k}-1},X_{\sigma_k})\in \til{\xi}}{X,\tree}{} }{k} \to \gamma \quad \text{ as } k\to \infty.
\]
Let $\phi_k = d(\rho,X_{\sigma_k})$. Then $\xi_{\phi_k} = (X_{\sigma_k-1},X_{\sigma_k})$, and hence from the above almost surely as $k\to\infty$ 
\[
-\frac{\log \prcond{\xi_{\phi_k}\in \til{\xi}}{X,\tree}{}}{k} \to \gamma.
\]
Lemma~\ref{lem:regenerationtimes} now gives that almost surely $\phi_k/k\to \E{\phi_2-\phi_1}$ as $k\to\infty$ with $\E{\phi_2-\phi_1}<\infty$. This now implies that 
\[
-\frac{\log \prcond{\xi_{k}\in \til{\xi}}{\xi,\tree}{}}{k} \to \frac{\gamma}{\E{\phi_2-\phi_1}}=:\mathfrak{h}.
\]
We turn to the proof of the fluctuations. Using the bound on the variance of $\sum_{i=1}^{k}Y_i$ from Lemma~\ref{lem:iidentropy} together with~\eqref{eq:tau0bound} and Chebyshev's inequality we obtain that for all $\epsilon>0$ there exists a positive constant~$C$ so that for all $k\geq (KR)^2$
\begin{align*}
        \prstart{\left| -\log \prcond{\xi_{\phi_k} \in \til{\xi}}{\xi,\tree}{} - \gamma k \right|\geq C\sqrt{k}}{T_0}\leq \epsilon.
\end{align*}
We now need to transfer the fluctuations result to the process $-\log \prcond{\xi_k\in \til{\xi}}{\xi,\tree}{}$. 
As in Claim~\ref{cl:renewal} for each $k\in \N$ let 
\[
{N}_k = \max\{ i\geq 0: \phi_i\leq k+K\}.
\]
Then we have 
\begin{align*}
       & \prstart{\left| -\log \prcond{\xi_k\in \til{\xi}}{\xi,\tree}{} - \mathfrak{h}k\right|>C\sqrt{k}}{T_0}   \\ \leq &\prstart{-\log \prcond{\xi_{\phi_{{N}_k+1}}\in \til{\xi}}{\xi,\tree}{}>\mathfrak{h}k + C\sqrt{k}}{T_0} \\&+ \prstart{-\log \prcond{\xi_{\phi_{{N}_k}}\in \til{\xi}}{\xi,\tree}{}<\mathfrak{h}k - C\sqrt{k}}{T_0}.
\end{align*}
Using again the monotonicity, in the sense that if $\xi_i\in \til{\xi}$, then also $\xi_{j}\in \til{\xi}
$ for every $j<i$, and the concentration of ${N}_k$ from Claim~\ref{cl:renewal} proves the result for the suitable choice of the constant $C$. 
\end{proof}

\begin{remark} \label{rem:BSlimit}
        \rm{
        We note that both the entropy constant $\mathfrak{h}$ and the speed constant $\nu$ appearing in Proposition~\ref{pro:entropy} and Lemma \ref{lem:speed} depend on $n$ but are both of order $1$. We recall that by the bounded degree assumption  $(G_n)$ has a subsequence converging in   the Benjamini-Schramm sense. To prove cutoff w.h.p.\ it suffices to show that any subsequence has a further subsequence for which cutoff holds w.h.p. We may thus assume such a limit exists. One can show that if $(G_n)$ has a Benjamini-Schramm limit then the entropy and speed constants converge to the corresponding constants when the quasi tree $\tree$ is defined w.r.t.\ the limit (with $R=\infty$). In general, the rate of convergence  can be arbitrary, and so in order to obtain any control on the cutoff window it is important to work with our $\nu$ and $\mathfrak{h}$, rather than with their limit.}
\end{remark}

\section{Truncation}\label{sec:truncation}

\begin{definition}\label{def:wtil}
        \rm{
        Let $e$ be a long range edge of $\tree$ and let $\xi$ be a loop erased random walk started from the root of $\tree$ as in Definition~\ref{def:lerw}. We define 
        \[
W_\tree(e) = -\log \prcond{e\in \xi}{\tree}{}.
\]
For a long range edge $e=(x,y)$ with $d(\rho,x)<d(\rho,y)$ we write $\ell(e)=d(\rho,y)$.  We define
\[
\til{W}_\tree(e) = -\log \prcond{(X_{\tau^{\ell(e)}-1},X_{\tau^{\ell(e)}})=e}{\tree}{},
\]
where $X$ is a simple random walk on $\tree$ started from the root and $\tau^{\ell(e)}=\inf\{t\geq 0: d(\rho,X_t)=\ell(e)\}$.         }
\end{definition}

\begin{remark}
        \rm{
        In the definition $\til{W}$ above, we are requiring the walk $X$ to first hit level $\ell(e)$ by crossing $e$. Note that in this way $\til{W}$ only depends on the first~$\ell(e)$ levels of the tree. 
        }
\end{remark}

\begin{lemma}\label{lem:twolooperasures}
        There exists a positive constant $c$ so that for all realisations of $\tree$ and all edges $e$ of~$\tree$ we have 
        \[
      W_\tree(e) \geq  \til{W}_\tree(e)- cR^2.
        \]
\end{lemma}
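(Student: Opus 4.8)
The plan is to compare the two loop-erased quantities $W_\tree(e)$ and $\til W_\tree(e)$ by relating the events $\{e\in\xi\}$ and $\{(X_{\tau^{\ell(e)}-1},X_{\tau^{\ell(e)}})=e\}$ for a simple random walk $X$ on $\tree$ started from the root, writing $e=(x,y)$ with $d(\rho,x)<d(\rho,y)$ and $\ell=\ell(e)=d(\rho,y)$. The key observation is that, because of the ``tree-like'' structure of the quasi tree, there is a \emph{unique} sequence of long range edges $e_1,\dots,e_{\ell}=e$ that any path from $\rho$ to $y$ must cross, and $e\in\xi$ forces the loop erasure to cross exactly $e_1,\dots,e_\ell$ in order. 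So on the one hand $\{e\in\xi\}$ is, up to the internal routing inside the $R$-balls, the same as the event that the induced walk on long range edges follows $e_1,\dots,e_\ell$ and then the walk from the far endpoint of $e$ never backtracks across $e$; and $\{(X_{\tau^\ell-1},X_{\tau^\ell})=e\}$ is the event that the first time level $\ell$ is reached it is reached by crossing $e$, which (again by the tree structure) also forces the walk to have crossed $e_1,\dots,e_{\ell-1}$ beforehand.

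The concrete step is a lower bound of the form $\prcond{e\in\xi}{\tree}{}\ge c' e^{-cR^2}\,\prcond{(X_{\tau^\ell-1},X_{\tau^\ell})=e}{\tree}{}$, which upon taking $-\log$ yields the claim. To get this, first I would condition on the walk reaching $x$ (the near endpoint of $e$) as a ``fresh'' vertex having just crossed $e_1,\dots,e_{\ell-1}$ in the loop-erasure sense — this event already has probability at least that of reaching level $\ell$ through $e$ up to the choice of the last crossing; then I would lower-bound, by a fixed constant $c$ coming from Lemma~\ref{lem:uniformdrift}, the probability that from $x$ the walk crosses $e$ to $y$ and then never returns to $x$ (so that $e$ genuinely ends up in $\xi$ and is not erased). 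The only place the $R^2$ enters is in controlling the internal routing: inside the relevant $R$-ball the walk must travel from the entry point to $x$ without leaving the ball prematurely, and a crude bounded-degree/ bounded-radius estimate gives that this costs at most a factor $e^{-O(R^2)}$ (diameter of an $R$-ball is $O(R)$ in graph distance, and a ``worst case'' success probability over $O(R)$ steps in a bounded-degree ball is $(\Delta+1)^{-O(R)}$; the square is a safe overestimate, or arises from needing to control such routings at two levels near $\ell$). Using the domain Markov property (Lemma~\ref{lem:domainmarkov}) lets me peel off the long range edges one at a time so that the comparison reduces to a single $R$-ball, keeping the total loss to a single $e^{-O(R^2)}$ factor rather than something growing with $\ell$.

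I expect the main obstacle to be bookkeeping: making precise that $\{e\in\xi\}$ and $\{(X_{\tau^\ell-1},X_{\tau^\ell})=e\}$ really do share the same ``macroscopic'' prefix $e_1,\dots,e_{\ell-1}$, so that the two probabilities differ only through (i) the local event inside the final $R$-ball and (ii) the escape/no-backtracking event after crossing $e$ — and that each of those is bounded below by $e^{-O(R^2)}$ uniformly over all realisations of $\tree$ and all edges $e$. Once that structural reduction is in place the constant comes out directly from the uniform drift lemma and the bounded-degree assumption, and no delicate estimate is needed; the $cR^2$ slack is deliberately generous so that only order-of-magnitude bounds on the in-ball routing are required.
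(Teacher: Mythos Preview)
Your inequality is pointed the wrong way. You propose to show
\[
\prcond{e\in\xi}{\tree}{}\ \ge\ c' e^{-cR^2}\,\prcond{(X_{\tau^\ell-1},X_{\tau^\ell})=e}{\tree}{},
\]
and taking $-\log$ this gives $W_\tree(e)\le \til W_\tree(e)+cR^2$, which is the \emph{opposite} of the stated lemma. What has to be shown is
\[
\prcond{(X_{\tau^\ell-1},X_{\tau^\ell})=e}{\tree}{}\ \gtrsim\ e^{-cR^2}\,\prcond{e\in\xi}{\tree}{}.
\]
Your direction is in fact the easy one: if the walk first reaches level $\ell$ through $e$ and then (by Lemma~\ref{lem:uniformdrift}) escapes with probability at least $c$, then $e\in\xi$; so $\Pr(e\in\xi)\ge c\,\Pr((X_{\tau^\ell-1},X_{\tau^\ell})=e)$ with no $R^2$ at all. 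The nontrivial direction needs you to rule out the scenario where the walk first hits level $\ell$ through some \emph{other} edge, backtracks, and only later escapes through $e$ --- in that case $e\in\xi$ but the first-hitting event fails --- and your ``cross $e$ and escape'' argument says nothing about this.

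The paper handles the correct direction by writing both probabilities as products over levels via Lemma~\ref{lem:domainmarkov}, and then comparing the two products term by term through a last-exit decomposition. For levels $i$ that are farther than $c_3R$ from $\ell$ the ratio of the $i$th factors is $1+O(e^{-c(\ell-i)})$, which multiplies to a harmless constant; for the $O(R)$ levels closest to $\ell$ one only has the crude bound $(\Delta+1)^{-R}$ per level (coming from $\Pr_{y_i}(\tau_{x_{i+1}}<\tau^\ell)\ge(\Delta+1)^{-R}$), and it is the product of these $O(R)$ factors that produces $e^{-cR^2}$. So the $R^2$ is not a routing cost inside a single $R$-ball as you guessed, but an $R$-fold repetition of an $R$-sized loss near level $\ell$; your sketch does not contain this mechanism.
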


\begin{proof}[\bf Proof]

In this proof we fix the graph $\tree$ and so we drop the dependence on $\tree$ from the notation. 

Let $X$ be a simple random walk on $\tree$ started from the root $\rho$ and let $\xi(e)$ be the loop erasure of the path of $X$ until the first time that it hits level $\ell(e)$. Then we clearly have 
\[
\{(X_{\tau^{\ell(e)}-1},X_{\tau^{\ell(e)}})=e\} = \{e\in \xi(e)\}.
\]
It suffices to show that there exists a positive constant $c$ so that 
\begin{align}\label{eq:goalloop}
        \prstart{e\in \xi(e)}{} \gtrsim e^{-cR^2}\cdot \prstart{e\in \xi}{},
\end{align}
since taking logarithms of both sides proves the lemma.
To simplify notation we write $\ell=\ell(e)$ (and $\tau^{\ell}=\inf\{t\geq 0: d(\rho,X_t)=\ell \}$ as above). Let $e_1,\ldots, e_\ell=e$ be the sequence of long range edges leading to $e$.
Letting $e_i= (x_{i},y_{i})$ with $d(\rho,x_i)<d(\rho,y_i)$ and using Lemma~\ref{lem:domainmarkov} for the transition probabilities of the loop erased random walk we now get
\begin{align}\label{eq:looperasurestopped}
        \prstart{e\in \xi(e)}{} = \prod_{i=0}^{\ell-1} \prstart{\til{X}^i_{L_i^\ell}=x_{i+1}}{y_{i}},
\end{align}
where $y_0=\rho$ and for each $i$, $\til{X}^i$ is a simple random walk on $\tree(y_i)$ and 
$L_i^\ell$ denotes the last time before reaching level $\ell$ of $\tree$ that $\til{X}^i$ is in the ball centred at $y_i$. Similarly for the loop erasure~$\xi$ we have 
\begin{align}\label{eq:loooperasurecontinue}
        \prstart{e\in \xi}{} = \prod_{i=0}^{\ell-1}\prstart{\til{X}^i_{L_i}=x_{i+1}}{y_{i}},
        \end{align}
        where now $L_i$ is the last time that $\til{X}$ is in the ball centred at $y_i$. 

Using the last exit decomposition formula, we obtain
\begin{align}\label{eq:twoequations}
\begin{split}
\prstart{\til{X}^i_{L_i^\ell}=x_{i+1}}{y_{i}} &= \frac{\prstart{\tau_{x_{i+1}}<\tau^\ell}{y_i}}{\prstart{\tau^\ell<\tau_{x_{i+1}}^+}{x_{i+1}}}\cdot P(x_{i+1}, y_{i+1})\cdot   \prstart{\tau_{x_{i+1}}>\tau^\ell}{y_{i+1}} 
\\
\prstart{\til{X}^i_{L_i}=x_{i+1}}{y_{i}} &= \frac{\prstart{\tau_{x_{i+1}}<\infty}{y_i}}{\prstart{\tau_{x_{i+1}}^+=\infty}{x_{i+1}}}\cdot P(x_{i+1}, y_{i+1}) \cdot \prstart{\tau_{x_{i+1}}=\infty}{y_{i+1}}  
\end{split}
\end{align}
where $\frac{1}{\prstart{\tau_{x_{i+1}}^+=\infty}{x_{i+1}}}$ is the expected number of visits to $x_{i+1}$, once it is reached, while  $\frac{1}{\prstart{\tau^\ell<\tau_{x_{i+1}}^+}{x_{i+1}}}$  is the expected number of such visits before time $\tau^{\ell}$.  We need to compare the ratios of the terms appearing in the two expressions above. 

For the last two terms we have $\mathbb{P}_{y_{i+1}}(\tau_{x_{i+1}}>\tau^{\ell}) \ge 
\mathbb{P}_{y_{i+1}}(\tau_{x_{i+1}} = \infty)$. We now explain that it suffices to prove that there exist constants $c_1, c_2$ and $c_3$ so that for all $i\leq \ell$
\begin{align}\label{eq:tauelltauxi1}
\frac{\mathbb{P}_{x_{i+1}} (\tau_{x_{i+1}}^+=\infty)}{
\mathbb{P}_{x_{i+1}}(\tau_{x_{i+1}}^+>\tau^{\ell})} \geq \frac{1}{1+c_2e^{-c_1(\ell-i)}},
\end{align}
for $i\leq \ell-c_3R$
\begin{align}\label{eq:boundforsmalli}
        \frac{\mathbb{P}_{y_i}(\tau_{x_{i+1}}<\tau^{\ell})}{
\mathbb{P}_{y_i} (\tau_{x_{i+1}}<\infty)} \geq \frac{1}{1-e^{-c_1(\ell-i)/2}},
\end{align}
while for all $i\leq \ell$
\begin{align}\label{eq:alli}
\frac{\mathbb{P}_{y_i}(\tau_{x_{i+1}}<\tau^{\ell})}{
\mathbb{P}_{y_i} (\tau_{x_{i+1}}<\infty)} \geq \frac{1}{(\Delta+1)^R}.  
\end{align}
Indeed, once these bounds are established, we can easily finish the proof, since for all $i$ satisfying $\ell-c_3R< i\leq \ell$ plugging the bounds~\eqref{eq:tauelltauxi1} and~\eqref{eq:alli} into~\eqref{eq:twoequations} we get
        \begin{align*}
                \prstart{\til{X}^i_{L_i^\ell}=x_{i+1}}{y_{i}} \geq \frac{1}{1+c_2e^{-c_1(\ell-i)}}\cdot \frac{1}{(\Delta+1)^R}\cdot   \prstart{\til{X}^i_{L_i}=x_{i+1}}{y_{i}}.
        \end{align*}
        For $i$ satisfying $i\leq \ell - c_3R$ plugging the bounds~\eqref{eq:tauelltauxi1} and~\eqref{eq:boundforsmalli} into~\eqref{eq:twoequations} gives
        \begin{align*}
                \prstart{\til{X}^i_{L_i^\ell}=x_{i+1}}{y_{i}} \geq \frac{1-e^{-c_1(\ell-i)/2}}{1+c_2e^{-c_1(\ell-i)}}\cdot   \prstart{\til{X}^i_{L_i}=x_{i+1}}{y_{i}}.
        \end{align*}
        From these two inequalities together with~\eqref{eq:looperasurestopped} and~\eqref{eq:loooperasurecontinue} we now deduce
        \begin{align*}
         \prstart{e\in \xi(e)}{} \geq \frac{1}{(\Delta+1)^{c_3R^2}}\cdot  \prstart{e\in \xi}{} \cdot \prod_{i=0}^{\ell-1}\frac{1-e^{-c_1i/2}}{1+c_2e^{-c_1 i}}\gtrsim \frac{\prstart{e\in \xi}{}}{(\Delta+1)^{c_3R^2}},
        \end{align*}
        which proves~\eqref{eq:goalloop}, and hence finishes the proof of the lemma. It thus remains to prove~\eqref{eq:tauelltauxi1}, \eqref{eq:boundforsmalli} and~\eqref{eq:alli}.

We start with~\eqref{eq:tauelltauxi1}. 
We have 
\begin{align*}
\prstart{\tau^\ell<\tau^+_{x_{i+1}}}{x_{i+1}}=\prstart{\tau_{x_{i+1}}^+=\infty}{x_{i+1}}  + \prstart{\tau^\ell<\tau_{x_{i+1}}^+<\infty}{x_{i+1}}.
\end{align*}    
Using Lemma~\ref{lem:uniformdrift} we get that there exists a positive constant $c_1$ such that 
        \[
\prstart{\tau^\ell<\tau_{x_{i+1}}^+<\infty}{x_{i+1}} \leq  e^{-c_1(\ell-i)}.
        \]
        Using Lemma~\ref{lem:uniformdrift} again, we get that there exists a positive constant $c_2$ so that
        \begin{align*}               \prstart{\tau^\ell<\tau_{x_{i+1}}^+}{x_{i+1}}\leq  \prstart{\tau_{x_{i+1}}^+=\infty}{x_{i+1}}  \left(1+ c_2e^{-c_1(\ell-i)} \right),
        \end{align*}
        therefore establishing~\eqref{eq:tauelltauxi1}.
 
       Suppose that $i$ is such that $i\leq \ell- c_3R$ for a positive constant $c_3$ to be determined later. Then using Lemma~\ref{lem:uniformdrift} we have 
       \begin{align*}
       \prstart{\tau_{x_{i+1}}<\infty}{y_{i}} &=\prstart{\tau_{x_{i+1}}<\tau^\ell}{y_{i}} +  \prstart{\tau^\ell<\tau_{x_{i+1}}<\infty}{y_{i}} \\&\leq  \prstart{\tau_{x_{i+1}}<\tau^\ell}{y_{i}} + e^{-c_1(\ell -i)} \\ &\leq  \prstart{\tau_{x_{i+1}}<\tau^\ell}{y_{i}}  + e^{-c_1(\ell-i)/2} \cdot \frac{1}{(\Delta+1)^R},
       \end{align*}
        where the last inequality follows by choosing $c_3$ as a function of $\Delta$ and $c_1$. Using that 
        \[
         \prstart{\tau_{x_{i+1}}<\infty}{y_{i}} \geq \frac{1}{(\Delta+1)^R},
       \] 
        we obtain
        \begin{align*}
          \prstart{\tau_{x_{i+1}}<\infty}{y_{i}} \leq  \prstart{\tau_{x_{i+1}}<\tau^\ell}{y_{i}} + e^{-c_1(\ell-i)/2}  \prstart{\tau_{x_{i+1}}<\infty}{y_{i}}.
        \end{align*}
        Rearranging this gives
                \begin{align*}
       (1-e^{-c_1(\ell-i)/2} ) \prstart{\tau_{x_{i+1}}<\infty}{y_{i}} \leq  \prstart{\tau_{x_{i+1}}<\tau^\ell}{y_{i}},
        \end{align*}
        thus establishing~\eqref{eq:boundforsmalli}.
        
        Finally for all $i$ we also have 
        \begin{align*}
                \prstart{\tau_{x_{i+1}}<\tau^\ell}{y_{i}} \geq \frac{1}{(\Delta+1)^R} \geq \frac{1}{(\Delta+1)^R}\cdot \prstart{\tau_{x_{i+1}}<\infty}{y_{i}},
        \end{align*}
        proving~\eqref{eq:alli}. This completes the proof of the lemma.
\end{proof}

\begin{definition}\label{def:trunc}\rm{
Let $A>0$ and $K=\lceil C_2\log \log n \rceil$ for a constant $C_2$ to be determined. For a long range edge $e$ of $\tree$ we define the ``truncation event'' $\trunc{e}{A}$ to be 
\[
\trunc{e}{A} = \left\{ \til{W}_\tree(e) > \log n  -  A\sqrt{\log n}\right\}\cap \{\ell(e)\geq K\},
\]
where $\ell(e)$ stands for the level of $e$. 
}       
\end{definition}

In the next section, where we construct the coupling of the walk on $\tree$ with the walk on $G_n^*$ we will need to truncate the edges of $\tree$ that satisfy the ``truncation criterion'' above. We will then need to ensure that the random walk on $\tree$ does not visit  truncated edges by the relevant time $t$ with large probability. We achieve this in the following lemma.

%

%

\begin{lemma}\label{lem:truncation}
Let $K$ be as in Definition~\ref{def:trunc} and let $T_0$ be a realisation of the first $K$ levels of $\tree$. Let~$X$ be a simple random walk on $\tree$ started from its root and set $t= \frac{\log n}{\nu\mathfrak{h}} -B\sqrt{\log n}$, where $\nu $ and $\mathfrak{h}$ are given in Lemma~\ref{lem:speed} and Proposition~\ref{pro:entropy} respectively. Then for all $\epsilon\in (0,1)$ there exist $B$ and $A$ (depending on $\epsilon$ and $B$) sufficiently large so that
\[
\prcond{\bigcup_{k\leq t}\trunc{(X_{k-1},X_k)}{A}}{\B_K(\rho)=T_0}{} <\epsilon.
\]      
\end{lemma}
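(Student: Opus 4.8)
The plan is to show that the walk is unlikely to cross any truncated edge by time $t$ by combining three ingredients: (a) the comparison $W_\tree(e)\ge \til W_\tree(e)-cR^2$ from Lemma~\ref{lem:twolooperasures}, so that a truncated edge $e$ (with $\ell(e)\ge K$) has $W_\tree(e)> \log n - A\sqrt{\log n} - cR^2$, hence the probability that the loop-erased walk $\xi$ (from the root) ever uses $e$ is at most $n^{-1}\exp((A\sqrt{\log n}+cR^2))= n^{-1}\exp(O(A\sqrt{\log n}))$ since $R=O(\log\log n)$; (b) the speed estimate from Lemma~\ref{lem:speed}, which controls $\sup_{s\le t} d_\tree(\rho,X_s)$, so only edges up to level roughly $\nu t + 2C\sqrt{\log n}$ are relevant; and (c) the entropy concentration from Proposition~\ref{pro:entropy}, which (via the relation between $W_\tree$, $\xi$ and the two-walk collision probability $-\log\pr{\xi_k\in\til\xi}$) controls how many edges at each level have $W_\tree$ that small.

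The key steps, in order, are as follows. First, I would translate the event $\{X \text{ crosses } e \text{ by time } t\}$ into a statement about $\til W_\tree(e)$: if $X$ crosses $e$ by time $t$, then in particular it reaches level $\ell(e)$, and conditioning appropriately, the probability (over $X$) that $e$ is the edge by which $X$ first reaches level $\ell(e)$ is $e^{-\til W_\tree(e)}$ by definition; one must be a little careful, since $X$ need not first reach level $\ell(e)$ through $e$, but one can write a union bound over the (a.s.\ finite, by Lemma~\ref{lem:uniformdrift}) set of edges at each level that $X$ ever crosses. Second, I would bound $\pr{X \text{ crosses } e \text{ by time }t}$ for a fixed edge $e$ with $\ell(e)=\ell$: using Lemma~\ref{lem:uniformdrift} (positive speed / exponential escape), the expected number of crossings of $e$ is $O(1)$, and the probability $X$ ever crosses $e$ is comparable (up to a constant) to $\pr{e\in\xi(e)} = e^{-\til W_\tree(e)}$, where $\xi(e)$ is the loop erasure up to hitting level $\ell$. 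So the expected number of truncated edges at level $\ell\ge K$ that $X$ crosses by time $t$ is at most $\sum_{e:\ell(e)=\ell}\pr{X\text{ crosses }e}\mathbf 1\{\trunc{e}{A}\}$, and on the truncation event each such term satisfies $\pr{X \text{ crosses } e} \lesssim e^{-\til W_\tree(e)}$ with $\til W_\tree(e) > \log n - A\sqrt{\log n}$.

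Third — and this is the crux — I would bound $\sum_{\ell\ge K}\sum_{e:\ell(e)=\ell, \, \til W_\tree(e)>\log n - A\sqrt{\log n}} e^{-\til W_\tree(e)}$. Summing $e^{-\til W_\tree(e)}$ over \emph{all} edges $e$ at level $\ell$ gives exactly $1$ (it's the probability that $X$ reaches level $\ell$ at all, which is $1$ on the a.s.\ event that the walk escapes, cf.\ Remark~\ref{rem:decayofdistance}), so the naive bound on the full sum over levels is infinite; this is why the speed estimate is needed to cut the sum at level $\ell_{\max} := \lceil \nu t + 2C\sqrt{\log n}\rceil$, contributing an error $\epsilon$ to the probability by Lemma~\ref{lem:speed}. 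But even $\ell_{\max}\asymp \log n$ levels each contributing up to $1$ is too much. The point is that the constraint $\til W_\tree(e) > \log n - A\sqrt{\log n}$ is very restrictive when $\ell$ is much less than $\ell_{\max}$: I would use Proposition~\ref{pro:entropy} (the a.s.\ convergence $-\log\pr{\xi_k\in\til\xi}/k\to\mathfrak h$, together with $\phi_k/k\to\E{\phi_2-\phi_1}$ and $\nu = \E{\phi_2-\phi_1}/\E{\sigma_2-\sigma_1}$, so that $\til W$ of a "typical" edge at level $\ell$ is about $\mathfrak h \ell / \E{\phi_2-\phi_1}$) plus concentration, to show that for $\ell \le \ell_{\max} - m\sqrt{\log n}$ the total $\til W$-mass $\sum_{e:\ell(e)=\ell}e^{-\til W_\tree(e)}\mathbf 1\{\til W_\tree(e)>\log n - A\sqrt{\log n}\}$ is at most $\epsilon\, e^{-c m}$ or so, which sums to $o(1)$ over $\ell$, while for $\ell$ within $O(\sqrt{\log n})$ of $\ell_{\max}$ there are only $O(\sqrt{\log n})$ levels, each contributing at most $e^{-(\log n - A\sqrt{\log n})}\cdot(\text{number of edges at that level that }X\text{ can reach})$; here I would invoke the bounded-degree and exponential-tail-of-regeneration-spacings facts (Lemma~\ref{lem:regenerationtimes}, Remark~\ref{rem:everyreal}) to argue that the number of relevant edges at level $\ell_{\max}$ is at most $e^{o(\sqrt{\log n})}$ in expectation — more precisely, one controls $\sum_{e:\ell(e)=\ell} \pr{X\text{ reaches the tail of }e}$ directly rather than counting edges. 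Finally, I would choose $B$ large (so that $t$ is small enough that $\ell_{\max}\nu^{-1}$ type bounds and the entropy concentration kick in with the right sign) and then $A = A(\epsilon, B)$ large, apply Markov's inequality to the expected number of truncated edges crossed, add the $\epsilon$ from the speed estimate, and conclude.

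The main obstacle I expect is the third step: making rigorous the statement that "most of the $e^{-\til W_\tree(e)}$ mass at level $\ell$ sits on edges with $\til W_\tree(e)\approx \mathfrak h\ell/\E{\phi_2-\phi_1}$, hence essentially none of it on edges with $\til W_\tree(e) > \log n - A\sqrt{\log n}$ unless $\ell$ is close to $\ell_{\max}$." This is exactly a large-deviation/concentration statement for $\til W$ along the loop-erased walk, and one wants it \emph{in probability over $\tree$} (conditioned on $T_0$), uniformly enough to sum over the $\asymp\log n$ levels. The cleanest route is probably to express $\sum_{e:\ell(e)=\ell}e^{-\til W_\tree(e)}\,f(\til W_\tree(e))$ as $\E[f(\til W_\tree(E_\ell))\mid \tree]$ where $E_\ell$ is the (random) edge by which $X$ first reaches level $\ell$, i.e.\ $\til W_\tree(E_\ell) = -\log\pr{(X_{\tau^\ell-1},X_{\tau^\ell})=E_\ell\mid\tree}$ evaluated at a sample, and then use that along regeneration times this is a sum of i.i.d.\ increments (Lemma~\ref{lem:iidentropy}, Lemma~\ref{lem:regenerationtimes}) with exponential moments, so that $\pr{\til W_\tree(E_\ell) > \log n - A\sqrt{\log n}}$ is exponentially small in $(\ell_{\max}-\ell)/\sqrt{\log n}$ by a Chernoff bound, after subtracting the mean $\asymp \mathfrak h\ell/\E{\phi_2-\phi_1}$. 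Combining this with a union bound over levels and over the realization of which edges $X$ actually visits is the technical heart; everything else is bookkeeping with Lemmas~\ref{lem:uniformdrift}, \ref{lem:speed} and Proposition~\ref{pro:entropy}.
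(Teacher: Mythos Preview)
Your ingredients are the right ones --- the speed bound (Lemma~\ref{lem:speed}), the passage from the walk to its loop erasure via Lemma~\ref{lem:uniformdrift}, the comparison Lemma~\ref{lem:twolooperasures}, and the entropy concentration of Proposition~\ref{pro:entropy}. The paper uses exactly these. However, your third step --- the union bound over levels, with Markov's inequality applied to the expected number of truncated edges crossed --- has a genuine gap.

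After reducing to the loop erasure, what you are effectively trying to bound is $\sum_{\ell \le L}\prstart{W_\tree(\xi_\ell) > M}{T_0}$ with $L=\nu t + C\sqrt t$ and $M=\log n - A\sqrt{\log n} - cR^2$. Even if you upgrade the variance bound in Lemma~\ref{lem:iidentropy} to exponential moments (which is plausible from the proof, though the $Y_k$ are only stationary, not i.i.d.), a Chernoff bound gives at best $\prstart{W_\tree(\xi_\ell)>M}{T_0}\lesssim \exp\bigl(-c(M-\mathfrak h\ell)^2/\ell\bigr)$. With $M\approx \mathfrak h L$ this is $\exp\bigl(-c(L-\ell)^2/L\bigr)$, and summing over $\ell\le L$ yields $\Theta(\sqrt{L})=\Theta(\sqrt{\log n})$, not $o(1)$. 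Your claimed decay ``exponentially small in $(\ell_{\max}-\ell)/\sqrt{\log n}$'' gives the same order when summed. So the first-moment route does not close.

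The paper sidesteps this entirely with a one-line observation you are missing: the monotonicity $W_\tree(\xi_k)\le W_\tree(\xi_{k+1})$, which holds because $\{\xi_{k+1}\in\til\xi\}\subseteq\{\xi_k\in\til\xi\}$ by the tree structure of long-range edges. Hence if \emph{any} $\xi_k$ with $k\le L$ satisfies $W_\tree(\xi_k)>M$, then $W_\tree(\xi_L)>M$, so
\[
\prstart{\textstyle\bigcup_{k\le L}\trunc{\xi_k}{A}}{T_0}\le \prstart{W_\tree(\xi_L)>M}{T_0},
\]
and a \emph{single} application of Proposition~\ref{pro:entropy} at $k=L$ finishes the proof. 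No summation over levels, no Chernoff upgrade, no exponential-moment bookkeeping is needed.
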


\begin{proof}[\bf Proof]
Using Lemma~\ref{lem:speed} there exists a positive constant $C$ so that if 
\[
D=\left\{\sup_{s\leq t}d(\rho,X_s)\leq \nu t +C\sqrt{t}\right\},
\]
then $\pr{D}\geq 1-\epsilon$. To simplify notation, we write again $\prstart{\cdot}{\tree_0}$ for the probability measure $\prcond{\cdot}{\B_K(\rho)=\tree_0}{}$.
We now get
\begin{align*}
        \prstart{\bigcup_{k\leq t}\trunc{(X_{k-1},X_k)}{A}}{T_0}\leq \prstart{\bigcup_{k\leq t}\trunc{(X_{k-1},X_k)}{A}, D}{T_0} +\epsilon.
\end{align*}
Define $F(e)$ to be the event that~$e$ is the first edge crossed by the walk for which the event $\trunc{e}{A}$ holds. Then we have 
\begin{align*}
        \prstart{\bigcup_{k\leq t}\trunc{(X_{k-1},X_k)}{A}, D}{T_0} \leq \prstart{  \bigcup_{e\in \tree:\ d(\rho, e)\leq \nu t+C\sqrt{t}}F(e)}{T_0}\\=\sum_{e\in \tree:\ d(\rho, e)\leq \nu t+C\sqrt{t}}\prstart{F(e)}{T_0}.
\end{align*}
Let $\xi$ be the loop erasure of $X$ (considered when it crosses long range edges) and define $\til{F}(e)$ to be the event that $e$ is the first long range edge crossed by the loop erasure $\xi$ for which the event $\trunc{e}{A}$ holds.  For a long range edge $e=(e_-,e_+)$, let $\til{\tau}(e)$ be the first return time to $e_+$ by $X$ after the first time $X$ crosses $e$. Then for every realisation $\mathfrak t$ of $\tree$ for which $\B_K(\rho)=T_0$ we have 
        \[
        \prstart{F(e), \til{\tau}(e)=\infty}{\mathfrak t} = \prstart{F(e), e\in \xi}{\mathfrak t} \leq \prstart{\til{F}(e), e\in \xi}{\mathfrak t},
        \]
        where in the notation above we have fixed $\tree$ to be $\mathfrak t$.
        By Lemma~\ref{lem:uniformdrift} we now get
        \[
        \prstart{F(e)}{\mathfrak t} \lesssim \prstart{F(e), \til{\tau}(e)=\infty}{\mathfrak t},
        \]
        and hence putting all things together we deduce
        \begin{align*}
                &\prstart{\bigcup_{k\leq t}\trunc{(X_{k-1}, X_k)}{A}, D}{T_0}\lesssim 
        \sum_{e\in \tree:\ d(\rho, e)\leq \nu t+C\sqrt{t}} \prstart{\til{F}(e)}{T_0}  \\&=
                \prstart{\bigcup_{e\in \tree: \ d(\rho, e)\leq \nu t+C\sqrt{t}} \til{F}(e)}{T_0} = \prstart{\bigcup_{k\leq \nu t+C\sqrt{t}}\trunc{\xi_k}{A}}{T_0},
        \end{align*}
        where the first equality follows since by definition the events $\til{F}(e)$ are disjoint.
        By Lemma~\ref{lem:twolooperasures} we have that on the event $\trunc{\xi_k}{A}$
        \[
        W_\tree(\xi_k) >\log n- A\sqrt{\log n} -cR^2.
        \]
        Using that $W_\tree(\xi_k)\leq W_\tree(\xi_{k+1})$ (since the loop erasure is only considered when it crosses long range edges) gives that on the event $\cup_{k\leq L}\trunc{\xi_k}{A}$ with $L=\nu t +C\sqrt{t}$ we have 
        \[
        W_\tree(\xi_L) > \log n-A\sqrt{\log n}-cR^2.
        \]
        This together with Proposition~\ref{pro:entropy} conclude the proof.
\end{proof}

\section{Coupling}
\label{s:coupling}
Recall that we refer to the edges of the perfect matching of $\G$ as \emph{long range edges}.
\begin{definition}
\rm{
In the graph $\G$ we define the (long range) distance between $x$ and $y$ to be the minimal number of long range edges needed to cross to go from $x$ to $y$, when we only allow at most~$R$ consecutive  edges of $G_n$ in the path from $x$ to $y$ and we do not allow any long range edge (here considered as undirected) to be crossed more than once. (The first constraint is put in order to avoid having long range distance 0 between all pairs of vertices whose graph distance in $G_n$ is  $R+1$, whereas without the second constraint  the distance between such pairs would be always at most 2.)  Like for the {\quasi} $\tree$, we rarely use the regular graph distance on $G_n^*$, so the term ``distance'' below will refer to the aforementioned distance, unless otherwise specified.

We write $\B_K^*(x)$ to denote the ball of radius~$K$ and centre~$x$ in this metric.   
We write $\B_{G_n}(x,r)$ for a ball centred at $x$ of radius $r$ in the graph metric of $G_n$. When $r=R$, we call it the ${\G}\rball$ centred at $x$.}
\end{definition}

As in Lemma \ref{lem:truncation} we set
\begin{align}\label{eq:defoft}
t=\frac{\log n}{\nu\mathfrak{h}} - B\sqrt{\log n}
\end{align}
for a constant $B$ to be determined and let $A$ be as in Lemma~\ref{lem:truncation}. 

\begin{definition}
\rm{
We call a vertex $x$ a $K$-root of $\G$ if $\B_K^*(x)$ is a possible realisation of the first $K$ levels of the {\quasi}  $\tree$ (corresponding to $G_n$). If $x$ is a $K$-root and $i \le K$, we denote by $\partial \B^*_{i}(x)$ the collection of vertices of (long range) distance~$i$ from $x$. (Note that this is a slight abuse of notation, since $\partial \B^*_i(x)$ is not the internal vertex boundary of $\B^*_i(x)$ as the internal vertex boundary does not contain the centres of the $\tree\rball$s at distance~$i$ from~$x$.)     
} 
\end{definition}

We next define an exploration process of $\G$ and a coupling between the walk $X$ on $\G$ and a walk~$\til{X}$ on the {\quasi} $\tree$ corresponding to $G_n$.

\begin{definition}\label{def:coupling}
        \rm{
        Let $K=\lceil C_2 \log \log n \rceil $ for a constant $C_2$ to be determined as in Definition~\ref{def:trunc}, and suppose we work conditional on the event that $x_0$ is a $K$-root and that $\B_K^*(x_0)=T_0$, where~$T_0$ is a realisation of the first $K$ levels of a {\quasi}.
        Let $\{z_1,\ldots, z_L\}\subseteq \partial\B^*_{K/2}(x_0)$ be the collection of centres of $\tree\rball$s at long range distance $K/2$ from $x_0$, where $L\leq |\partial \B^*_{K/2}(x_0)|$. For each $z\in \partial \B^*_{K/2}(x_0)$ we denote by $V_z$ the set of offspring of $z$ on $\partial\B_K^*(x_0)$.
         Let $z\in \partial \B^*_{K/2}(x_0)$.  We now describe the \textbf{exploration process} of $\G$ corresponding to the set $V_z$ by constructing a coupling of a subset of $\G$ with a subset of a \quasi ~$\tree$ conditioned on the first $K$ levels of $\tree$ being equal to $T_0$. We first reveal all long range edges of $\tree$ with one endpoint in $\partial T_0$, i.e.\ with one endpoint at long range distance $K$ from $x_0$. For the long range edges originating in $V_z$ we couple them with the long range edges of $\G$ by using the optimal coupling between the two uniform distributions at every step. (At every step in $\G$ we choose an endpoint at random among all those that have not been selected yet.) If at some point one of these couplings fails, then we truncate the edge where this happened and stop the exploration for this edge in $\G$ but we continue it in $\tree$. We also truncate an edge and stop the exploration in $\G$ if the ${\G}\rball$ around the newly revealed endpoint of the edge intersects an already revealed ${\G}\rball$ (whenever we reveal the other endpoint of a long range edge, we reveal the ball of radius $R$ around it in the graph metric of $G_n$; coupling this endpoint between~$T$ and~$G_n^*$ is the same as coupling the two $R$-balls). In the case where the $\G\rball$ centred at the endpoint intersects an already revealed $\G\rball$, then we also truncate the edge leading to its centre and stop the exploration there too even though we may have already revealed some of its offspring. We always continue the exploration for $\tree$.  
        Once all long range edges joining levels $K$ and~$K+1$ of~$\tree$ have been revealed, we examine which of those satisfy the truncation criterion $\trunc{e}{A}$ (which is defined w.r.t.\ $\tree$, not $G_n^*$). We then stop the exploration at these edges for the graph $\G$, but we do continue the exploration of their offspring for the \quasi~$\tree$. Suppose we have explored all $k$ level edges of the {\quasi}~$\tree$ and also the corresponding ones in $\G$ that have not been truncated. Then for the edges of level $k+1$ we explore all of them in~$\tree$ and we use the optimal coupling to match the ones that come from non-truncated edges in $\G$ with the corresponding ones of $\tree$. We truncate an edge and stop the exploration process at this edge if the optimal coupling between the two uniform distributions fails at the endpoint of the edge or if the~${\G}\rball$ centred at the endpoint intersects an already revealed ${\G}\rball$. In the case where the $\G\rball$ centred at the endpoint intersects an already revealed $\G\rball$, then we also truncate the edge leading to its centre and stop the exploration there too even though we may have already revealed some of its offspring. We always continue the exploration for $\tree$.
        We continue the exploration process for~$t$ levels.
        
 We now describe a \textbf{coupling} of the walk $X$ on $\G$ starting from $x\in V_z$ with a walk $\til{X}$ on $\tree$ starting from $x$ as follows: we move $X$ and $\til{X}$ together for  $t$ steps as long as none of the following happen:
   \begin{enumerate}[(i)]
                   \item $\til{X}$ crosses a truncated  edge;
                \item There exists a vertex $v$ such that $\til{X}$ visits $v$ and then reaches the internal vertex boundary of the~${\tree}\rball$ centred at $v$ (i.e.\ it reaches a vertex in the ${\tree}\rball$ centred at $v$ which is at distance~$R$ (in the graph metric of $G_n$) from $v$) and does so  by time $t$, or
                \item $\til{X}$ visits a vertex $w\in \partial \B_{K/2}^*(x_0)$ for some $w\neq z$. 
    \end{enumerate}
If none of these occurs by time $t$ we say the \emph{coupling is successful}. 

We write $\F_i$ for the $\sigma$-algebra generated by $T_0$ and the exploration processes starting from all the vertices of $V_{z_{1}},\ldots,V_{z_i}$.
We call $z_i\in \partial \B^*_{K/2}(x_0)$ \textbf{good} if none of its descendants in $\partial T_0$ (i.e.\ those vertices $y\in\partial T_0$ such that $d(x_0,y) = d(x_0,z_i) + d(z_i,y)$)  has been explored during the exploration processes corresponding to the sets $V_{z_1},\ldots, V_{z_{i-1}}$.  Otherwise, $z_i$ is called \textbf{bad}. Note that the event $\{z_i\text{ is bad}\}$ is $\F_{i-1}$ measurable. Finally, we denote by $\mathcal{D}_i$ the collection of vertices of $\G$ explored in the exploration process of the set $V_{z_i}$.
        }
\end{definition}

\begin{remark}\label{rem:stayinDi}
        \rm{We note that if the coupling between $X$ and $\til{X}$ starting from $x\in V_{z_i}$, where $z_i\in \partial \B_{K/2}^*(x_0)$, succeeds for $t$ steps, then $\til{X}_s \in \mathcal{D}_i$ for all $s\leq t$.
        }
\end{remark}

\begin{lemma}\label{lem:badandexplored}
 In  the  setup of Definition~\ref{def:coupling}, deterministically, $|\mathcal{D}_i| \le N= n\exp(-A \sqrt{\log n}/3)$ for all $i \in L$ (for all sufficiently large $n$). Moreover, there exists a positive constant~$C$ (independent of $T_0$) so that the number $\rm{Bad}$ of bad vertices $z$ satisfies 
        \[
        \prcond{{\rm{Bad}}\geq C \sqrt{\log n} }{\B_K^*(x_0)=T_0}{} \leq \frac{1}{n^2}.
        \]
\end{lemma}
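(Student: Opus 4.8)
The plan is to handle the two assertions separately. For the deterministic bound $|\mathcal{D}_i|\le N=n\exp(-A\sqrt{\log n}/3)$, I would track how many vertices of $\G$ get revealed during the exploration process starting from $V_{z_i}$. Each revealed long range edge that is \emph{not} truncated contributes at most one new $\G\rball$, hence at most $(\Delta+1)^{R+1}=\exp(O(R))=\exp(O(\log\log n))$ new vertices. The key point is that an edge $e$ with $\ell(e)\ge K$ is explored in $\G$ only if $\trunc{e}{A}$ fails, i.e.\ only if $\til W_\tree(e)\le \log n - A\sqrt{\log n}$; but $\til W_\tree(e)=-\log\prcond{(X_{\tau^{\ell(e)}-1},X_{\tau^{\ell(e)}})=e}{\tree}{}$, so the number of non-truncated edges at any fixed level is at most $\exp(\log n - A\sqrt{\log n})=n\exp(-A\sqrt{\log n})$, since the corresponding hitting probabilities sum to at most $1$. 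Summing over the $\le t=O(\log n)$ levels and multiplying by the per-edge vertex count $\exp(O(\log\log n))$ gives a bound of the form $n\exp(-A\sqrt{\log n})\cdot\exp(O(\log\log n))\cdot O(\log n)\le n\exp(-A\sqrt{\log n}/3)$ for large $n$, which is $N$. (One should also absorb the few levels $<K$, which contribute only $\exp(O(R\cdot K))=\exp(O((\log\log n)^2))$ vertices, negligibly.)

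For the bound on the number $\mathrm{Bad}$ of bad vertices, recall $z_i$ is bad iff some descendant of $z_i$ in $\partial T_0$ was already revealed during the explorations of $V_{z_1},\dots,V_{z_{i-1}}$. Since $\partial T_0$ is fixed once we condition on $\B_K^*(x_0)=T_0$, and since the explorations only reveal long range edges (and their $\G\rball$s) whose endpoints are chosen uniformly among previously-unselected vertices, the event that exploration step $i-1$ reveals a given vertex of $\partial T_0$ is, at each edge-endpoint draw, a hypergeometric draw from a population of size $\ge n/2$ (at most $N=o(n)$ vertices having been removed). By the first part, across all of $V_{z_1},\dots,V_{z_{i-1}}$ at most $(i-1)N\le LN$ vertices get revealed, and $L\le|\partial T_0|\le\exp(O(R K))=n^{o(1)}$, so the total number of revealed vertices up to step $i-1$ is at most $n^{o(1)}\cdot N=n^{1-\Omega(1)}$. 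Hence for each of the $\le|\partial T_0|=n^{o(1)}$ vertices of $\partial T_0$ lying below $z_i$, the probability it was hit is at most (number of revealed vertices)$/(n/2)\le n^{-\Omega(1)}$; actually I want a cleaner route.

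A cleaner route: let $\mathrm{Rev}$ be the total number of vertices of $\G$ revealed across \emph{all} $L$ explorations; deterministically $\mathrm{Rev}\le LN\le n^{1-c}$ for some $c>0$ (using $L=n^{o(1)}$ and $N=n\exp(-A\sqrt{\log n}/3)$). Now $z_i$ is bad only if one of these $\mathrm{Rev}$ revealed vertices happens to lie in the set of descendants of $z_i$ in $\partial T_0$. Order the $L$ roots; when we process $V_{z_i}$, the roots $z_{i+1},\dots,z_L$ have not yet had their $\partial T_0$-descendants touched by the mechanism that would make them bad — so I instead bound $\mathrm{Bad}$ by a sum $\sum_{i}\1\{z_i\text{ bad}\}$ and note each indicator is $\F_{i-1}$-measurable, and $\prcond{z_i\text{ bad}}{\F_{i-1}}{}$ is controlled because the endpoint draws in steps $1,\dots,i-1$ are uniform without replacement: the chance that a particular previously-unrevealed vertex of $\partial T_0$ gets selected at a given draw is $\le 2/n$, so by a union bound over $\le\mathrm{Rev}\le n^{1-c}$ draws and over $\le|\partial T_0|\le n^{o(1)}$ target vertices, $\prcond{z_i\text{ bad}}{\F_{i-1}}{}\le n^{-c/2}$ say. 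Then $\E{\mathrm{Bad}}\le L\cdot n^{-c/2}=n^{o(1)-c/2}$, and since $\mathrm{Bad}\le L=n^{o(1)}$ is bounded, a first-moment Markov bound already gives $\prcond{\mathrm{Bad}\ge C\sqrt{\log n}}{T_0}{}\le n^{-c/3}$; to get the stated $1/n^2$ I would instead use a Bernstein/Chernoff bound for the sum of the $\F_{i-1}$-conditionally-dominated indicators (each stochastically below a Bernoulli of parameter $n^{-c/2}$), which gives exponentially small probability of exceeding, say, $C\sqrt{\log n}$, comfortably beating $n^{-2}$.

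The main obstacle is the dependence structure: the indicators $\1\{z_i\text{ bad}\}$ are not independent, and the conditional probabilities depend on exactly which vertices were revealed so far, which in turn depends on the random coupling failures and ball-overlap truncations. I would resolve this by the filtration argument above — show $\prcond{z_i\text{ bad}}{\F_{i-1}}{}$ is deterministically at most $p:=n^{-c/2}$ for all $i$ (using only the deterministic bound $\mathrm{Rev}\le n^{1-c}$ and the without-replacement uniformity of endpoint selection), so that $\mathrm{Bad}$ is stochastically dominated by $\mathrm{Bin}(L,p)$, and then apply a standard Chernoff bound. Care is needed that $|\partial T_0|$ and $L$ are genuinely $n^{o(1)}$: this follows from $|\partial T_0|\le\Delta^{R K}$ with $R,K=O(\log\log n)$, hence $\log|\partial T_0|=O((\log\log n)^2)=o(\log n)$, so indeed $L\le|\partial T_0|=n^{o(1)}$ and $LN=o(n^{1-c'})$ for any $c'<A\sqrt{\log n}/(3\log n)\to 0$ — one should state it as: for every fixed $\delta>0$, $LN\le n^{1-\delta}$ eventually, which suffices.
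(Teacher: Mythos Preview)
Your argument for the deterministic bound $|\mathcal{D}_i|\le N$ is correct and is exactly the paper's: at each level the non-truncated long range edges $e$ satisfy $\exp(-\til W_\tree(e))\ge n^{-1}\exp(A\sqrt{\log n})$, and since these are hitting probabilities summing to at most~$1$, there are at most $n\exp(-A\sqrt{\log n})$ of them; multiplying by the $R$-ball volume and by $t\asymp\log n$ levels yields~$N$.

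For the bound on $\mathrm{Bad}$, however, your filtration argument breaks down. You yourself note that $\{z_i\text{ is bad}\}$ is $\F_{i-1}$-measurable; consequently $\prcond{z_i\text{ bad}}{\F_{i-1}}{}$ equals the indicator itself and is $0$ or $1$, not something you can bound by $p=n^{-c/2}$. This also blocks the claimed stochastic domination $\mathrm{Bad}\preceq\mathrm{Bin}(L,p)$: the standard sequential-coupling criterion requires $\prcond{z_i\text{ bad}}{\1(z_1\text{ bad}),\ldots,\1(z_{i-1}\text{ bad})}{}\le p$, and since these earlier indicators are only $\F_{i-2}$-measurable you have not established (and it is not obvious) that conditioning on them alone gives a uniform bound. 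In particular, a first-moment bound on $\mathrm{Bad}$ is too weak for $1/n^2$, and your Chernoff step lacks the required conditional control.

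The paper's fix is to refine the granularity: instead of indexing by the $z_i$'s, index by individual edge-endpoint \emph{draws} of the exploration, of which there are at most $N$ (or $LN$, which is immaterial). At each draw, the conditional probability given all previous draws that the new $R$-ball meets $\partial T_0$ is deterministically at most $p'\lesssim \Delta^{R(K+1)}/n$, since $|\partial T_0|\le \Delta^{RK}$ and at most $o(n)$ vertices have been removed from the pool. The number $H$ of such ``hits'' then genuinely satisfies $H\preceq\mathrm{Bin}(N,p')$, and since $\mathrm{Bad}\le H$ one gets
\[
\prcond{\mathrm{Bad}\ge C\sqrt{\log n}}{\B_K^*(x_0)=T_0}{}\le \binom{N}{C\sqrt{\log n}}(p')^{C\sqrt{\log n}}\le \Bigl(\tfrac{Np'}{1}\Bigr)^{C\sqrt{\log n}}\le n^{-2}
\]
for $C$ large, exactly as needed. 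Your intuition and your estimates for $p'$, $N$, $L$ are all correct; only the object to which the filtration/Chernoff machinery is applied needs to be changed from the $z_i$-indicators to the per-draw indicators.
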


\begin{proof}[\bf Proof]

Let $x\in V_z$ and let $\tree$ be the {\quasi} rooted at $x$ obtained during the exploration process of $\G$.
Let $k\geq 0$ and $S_k$ be the set of long range edges with one endpoint at level $k-1$ and the other one at level $k$ of $\tree$. Consider now
\[
\til{S}_k=\{e\in S_k: \trunc{e}{A}^c \text{ holds} \}.
\]
Recalling the definition of $\til{W}_\tree$ and of $\tau_{\ell(e)}$ from Definition~\ref{def:wtil}, we have 
\[
\sum_{e\in \til{S}_k} \exp(-\til{W}_\tree(e)) = \sum_{e\in \til{S}_k} \prcond{(X_{\tau^{\ell(e)-1}},X_{\tau^{\ell(e)}})=e}{\tree}{}\leq 1.
\]
Therefore, using the bound on $\til{W}_\tree$ from the truncation event, we obtain
\[
|\til{S}_k|\leq n\exp(-A \sqrt{\log n}),
\]
where $A$ is as in Lemma~\ref{lem:truncation}.
Since every long range edge we explore has a neighbourhood of radius $R$ around it, this means that when we reach distance $k$ from the root, we have revealed at most $\Delta^R |\til{S}_k|$ vertices, which is at most $n\exp(-A \sqrt{\log n}/2)$ for $n$ sufficiently large (recall that $R \asymp \log \log n$). Since the exploration process continues for $t\asymp \log n$ levels, the number of explored vertices is at most 
\begin{equation*}
N=n\exp\left(-A \sqrt{\log n }/3\right).
\end{equation*}
At every step of the exploration process the probability of intersecting a vertex of $\partial T_0$ is upper bounded by
\[
c_1 (\Delta)^{R(K+1)}/n\lesssim \Delta^{2C_1C_2(\log \log n)^2}/n,
\]
where $c_1$ is a positive constant.
We therefore obtain
\begin{align*}
\pr{{\rm{Bad}}>C\sqrt{\log n}} &\leq {N \choose C \sqrt{\log n}} \left(\frac{\Delta^{2C_1C_2(\log \log n)^2}}{n} \right)^{C \sqrt{\log n}} \\ &\leq N^{C \sqrt{\log n}} \left(\frac{\Delta^{2C_1C_2(\log \log n)^2}}{n} \right)^{C\sqrt{\log n}} \leq \frac{1}{n^{2}}
\end{align*}
by taking $C$ sufficiently large and using the definition of $N$.
\end{proof}

\begin{lemma}\label{lem:couplingsuccess}
        In the same setup as in Definition~\ref{def:coupling},
for all $\epsilon>0$, there exist $B$ (in the definition of $t$) and $A$ (in the definition of the truncation criterion, depending on $\epsilon$ and $B$) sufficiently large so that for all $n$ large enough, on the event $\{\B_K^*(x_0)=T_0\}$, for all $i$ and all descendants $x\in \partial \B_K^*(x_0)$ of $z_i$, the coupling of Definition~\ref{def:coupling} satisfies
\[
\prcond{\text{the coupling of $X$ and $\til{X}$ succeeds}}{\F_{i-1}}{x}  \geq \1(z_i\text{ is good}) \cdot (1-\epsilon).
\]     
\end{lemma}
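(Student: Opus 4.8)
The plan is to decompose the failure event of the coupling into three parts corresponding to the three ways the coupling can fail, as listed in Definition~\ref{def:coupling}(i)--(iii), and then to bound the probability of each on the event that $z_i$ is good. First, note that on the event that $z_i$ is good, none of the descendants of $z_i$ on $\partial T_0$ has been revealed in the earlier exploration processes, so the exploration process started from $V_{z_i}$ reveals a fresh portion of $\G$; the optimal couplings used at each step therefore behave exactly like couplings of a genuine quasi tree $\tree$ (conditioned on $\B_K(\rho)=T_0$) with $\G$, except that they may fail. The key observation is that on the event that $z_i$ is good, if the coupling of the balls never fails \emph{and} no revealed $\G\rball$ intersects a previously revealed one, then the walk $\til X$ on $\tree$ and the walk $X$ on $\G$ stay together. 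So the failure of the coupling of $X$ and $\til X$ is contained in the union of the event in (i)--(iii) together with the event that the walk $\til X$ visits, by time $t$, a long range edge at which the ball-coupling failed or whose $\G\rball$ intersected a previously revealed one.

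First I would handle (i): by Lemma~\ref{lem:truncation}, for a suitable choice of $B$ and $A$ (depending on $\epsilon$ and $B$), the probability that $\til X$ crosses a truncated edge of $\tree$ by time $t$ is at most $\epsilon/4$; this is exactly the content of that lemma once we recall that conditioning on $\B_K^*(x_0)=T_0$ corresponds to conditioning on $\B_K(\rho)=T_0$ for the quasi tree. For (ii), I would use Lemma~\ref{lem:speed} and the exponential decay of $d(\rho,X_s)$ levels (Remark~\ref{rem:decayofdistance}): since $t\asymp \log n$ and $\nu>0$, with probability at least $1-\epsilon/4$ the walk $\til X$ reaches level at most $\nu t + C\sqrt t$ by time $t$; conditioned on not reaching too high a level, each $\tree\rball$ visited is visited for at most $O(\log n)$ steps with overwhelming probability (again by Lemma~\ref{lem:uniformdrift}, the number of returns to any vertex has exponential tails), and within each ball, which has radius $R \asymp \log\log n$, the chance of traveling graph-distance $R$ before leaving is at most $(\Delta+1)^{-R} \le (\log n)^{-c}$ for the right constant $C_1$; a union bound over the $O(\log n)$ balls visited gives a bound $o(1)$, in fact $\le \epsilon/4$ for $n$ large. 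For (iii), the walk $\til X$ starts at a descendant $x$ of $z_i$ on $\partial \B_K^*(x_0)$, i.e.\ at long range distance $K/2$ from $z_i$ and distance $K$ from $x_0$; to reach another centre $w \ne z_i$ on $\partial \B_{K/2}^*(x_0)$ it must travel back up at least $K/2 \asymp \log\log n$ long range levels, which by Lemma~\ref{lem:uniformdrift} (the walk has uniformly positive probability of never returning to its parent ball, hence the probability of going up $j$ levels decays like $e^{-cj}$) has probability at most $e^{-cK/2} = o(1) \le \epsilon/4$.

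The remaining and most delicate term is the event that $\til X$ visits, by time $t$, a long range edge where the ball-coupling failed or where the $\G\rball$ intersected a previously revealed one. Here I would argue as follows: by Lemma~\ref{lem:badandexplored} the total number of vertices revealed during \emph{all} the exploration processes is at most $N = n\exp(-A\sqrt{\log n}/3)$, so at each step the probability that a freshly revealed endpoint (uniform over not-yet-selected vertices) lands within graph-distance $R$ of a previously revealed vertex is at most $\Delta^R \cdot N / (n - N) \le 2\Delta^R \exp(-A\sqrt{\log n}/3) \le \exp(-A\sqrt{\log n}/4)$; the same bound controls the failure probability of the optimal coupling of the two uniform distributions (one over all of $V_n$, one over the unrevealed vertices), whose total variation distance is $|{\rm revealed}|/n \le \exp(-A\sqrt{\log n}/3)$. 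Thus each long range edge in the explored portion is ``bad'' with probability at most $\exp(-A\sqrt{\log n}/4)$, independently of the walk's trajectory given the portion explored so far. The crucial point, which mirrors the strategy of \cite{BLPS}, is that the walk $\til X$ only visits edges $e$ with $\til W_\tree(e) \le \log n - A\sqrt{\log n}$ (otherwise $e$ is truncated and we are in case (i)), i.e.\ edges that the walk crosses by time $t$ with probability at least roughly $n^{-1}\exp(A\sqrt{\log n}/2)$ — wait, more precisely by the truncation definition $\til W_\tree(e) = -\log \prstart{(X_{\tau^{\ell(e)}-1},X_{\tau^{\ell(e)}})=e}{\tree}{} \le \log n - A\sqrt{\log n}$ on non-truncated edges. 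Summing over the (at most $N$) non-truncated explored edges that the walk could reach, the expected number that are both visited by $\til X$ by time $t$ and bad is at most $\sum_e \prstart{\til X \text{ crosses } e}{} \cdot \exp(-A\sqrt{\log n}/4)$; since $\sum_e \prstart{\til X \text{ reaches } e \text{ by time } t}{}$ is bounded (the walk visits boundedly many long range edges per unit time on average, and $t \asymp \log n$, so this sum is $O(\log n)$, or one restricts to the event $D$ from Lemma~\ref{lem:truncation} and uses that the relevant hitting probabilities sum to $O(1)$ after the loop-erasure comparison of Lemma~\ref{lem:twolooperasures} exactly as in the proof of Lemma~\ref{lem:truncation}), the total is at most $O(\log n)\exp(-A\sqrt{\log n}/4) = o(1)$, hence $\le \epsilon/4$ for $n$ large. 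Combining the four bounds and choosing $B$ then $A$ large enough (as functions of $\epsilon$), the probability of failure is at most $\epsilon$ on the event that $z_i$ is good, and is vacuously bounded by $0 \le \1(z_i\text{ good})(1-\epsilon)$ otherwise. The main obstacle I anticipate is making the last step rigorous: one must transfer the hitting-probability bound on the walk to one on the loop-erased walk (to control the ``too small'' crossing probabilities via Lemma~\ref{lem:twolooperasures}) and simultaneously handle the fact that the set of explored edges is revealed adaptively together with the walk, so a careful conditioning argument — revealing the walk and the exploration in lockstep and using a supermartingale or first-failure decomposition as in the proof of Lemma~\ref{lem:truncation} — is needed rather than a naive union bound.
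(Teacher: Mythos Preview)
Your decomposition into (i)--(iii) plus the ``bad-coupling'' event is the same as the paper's, and your handling of (i) via Lemma~\ref{lem:truncation} and of (iii) via the exponential cost of ascending $K/2$ long-range levels matches the paper exactly.

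Your argument for (ii) has a gap. The claimed bound $(\Delta+1)^{-R}$ for reaching graph-distance $R$ before leaving the ball is wrong: a non-centre vertex has one long-range edge out of at most $\Delta+1$, so the probability of not leaving at each step is only $\le \Delta/(\Delta+1)$, giving $e^{-cR}$ with $c<1$. More importantly, you only control a single excursion into the ball, whereas the walk may leave via a long-range edge and return many times before time $t$. The paper's fix is different and clean: from a centre $a$, after $R-1$ steps the walk is at long-range distance $\ge c_1 R$ with probability $\ge 1-e^{-c_2R}$ (Remark~\ref{rem:decayofdistance}); from there, ever returning to $a$'s ball costs $\le e^{-c'R}$ (Lemma~\ref{lem:uniformdrift}); and the boundary cannot be reached in fewer than $R$ steps. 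A union bound over at most $t$ visited centres then gives $te^{-cR}=o(1)$ once $C_1$ is large.

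For the bad-coupling event your direct approach --- bound the expected number of bad edges crossed by $t\cdot p$ with $p\lesssim \Delta^{2R}N/(n-N)$ --- is genuinely different from, and simpler than, the paper's. The paper instead restricts to the event that the walk never revisits a vertex after reaching its depth-$K$ descendants (failure probability $te^{-cK}$), observes that then every visited ball lies within long-range distance $2K$ of the first-hit vertex at its level, and union bounds overlap/failure over those $\Delta^{R(2K+1)}$ neighbours per level. Your version avoids that detour. Your closing worries about adaptivity and supermartingales are unfounded: in Definition~\ref{def:coupling} the entire exploration of $\tree$ and of the coupled portion of $\G$ is carried out level by level \emph{before} the walk is run, so conditionally on $\tree$ the walk and the badness indicators are independent, and $\sum_e\prcond{\text{cross }e}{\tree}{}\prcond{B_e=1}{\tree}{}\le t\cdot p$ is immediate --- no loop-erasure transfer or martingale is needed.
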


\begin{proof}[\bf Proof]

We say that an overlap occurs at a vertex $y$ during the exploration process, when the ball $\B_{G_n}(y,R)$ (the ball of radius $R$ centred at $y$ w.r.t.\ $G_n$) revealed when exploring $y$ (i.e.\ when exploring some long range edge leading to $y$) intersects an already revealed $G_n^{*}\rball$. We say that the optimal coupling at a vertex has failed, if when revealing the endpoint of the long range edge coming out of it, the optimal coupling between the uniform distributions on the graph and the {\quasi} fails.  

We define $F$ to be the event that the walk $\til{X}$ crosses an edge of $\tree$ whose corresponding edge in $\G$ was truncated due to an overlap or because the optimal coupling failed. We first bound the probability of $F$. As in~\cite[Section~3.2]{BLPS}, we note that the event $F$ does not happen if the following occur: for each $i$ if the first time that $\til{X}$ reaches level $K+i$ there is no overlap and the optimal couplings succeed both at the current vertex and at all other vertices of $\tree$  at distance $2K$  from the walk at this time and in addition, if the walk never (by time $t$) revisits any vertex after visiting its depth $K$ descendants. (Indeed, if the walk never revisits any vertex after visiting its depth $K$ descendants, then each ball  visited by the walk by time $t$, say at level $\ell$, must be at distance at most $2K$ from the first ball to be visited at level  $\ell$.) This last event has failure probability at most $te^{-cK}$ by Lemma~\ref{lem:uniformdrift} and a union bound. Therefore, by choosing the constant $C_2$ in the definition of $K$ sufficiently large, this probability can be made $o(1)$. 
Since by  Lemma~\ref{lem:badandexplored} the total number of explored vertices in $\G$ is upper bounded by~$N$ (from Lemma \ref{lem:badandexplored}), the probability that the optimal coupling fails when the walk first visits level $K+i$ is at most\footnote{We are using the fact that the total variation distance between the uniform distributions on a set of size $n$ and on a subset of it of size $n-m$ is $m/n$.} $N/n$ and the probability that there is an overlap either there or at some vertex of the same level within distance $2K$ from it, is upper bounded by 
\[
\Delta^{R(2K+1)} \cdot \frac{\Delta^R N}{n-N}.
\]
By the union bound over all $t$ levels, we get that the probability that the event $F$ occurs is at most
\[t \cdot \Delta^{R(2K+1)} \cdot \frac{\Delta^R N}{n-N} + t \cdot \Delta^{R(2K+1)}\cdot \frac{N}{n} + t \cdot e^{-cK}= o(1),
\]
by choosing the constant $C$ in the definition of $K$ sufficiently large.

The coupling fails if the walk $\til{X}$ visits a truncated edge before time $t$ or if it visits a vertex $w\in \partial \B_{K/2}^*(x_0)$ with $w\neq z_i$.  But from Lemma~\ref{lem:truncation} (used to control the probability that the walk crosses an edge that got truncated due to the truncation criterion $\trunc{e}{A}$; edges that were truncated for other reasons were treated above), by choosing $A$ in the definition of the truncation criterion in terms of $\epsilon$ and $B$, we see that the first event has probability at most~$\epsilon/2$. The probability that $\til{X}$ visits a vertex $w\in \partial \B^*_{K/2}(x_0)$ with $w\neq z_i$ is at most $e^{-cK}$ for a positive constant $c$ by Lemma~\ref{lem:uniformdrift}, which is again $o(1)$ (recall that  $\til{X}$ starts from $x \in \partial T_0$ where $x$ is a descendant of $z_i$). 

Finally, another way for the coupling to fail is if the walk $\til{X}$ visits the boundary of a $\tree\rball$ before time $t$. Let $a$ be a centre of a $\tree\rball$ in $\tree$ and let $H_1$ be the event that $\til{X}$ ever visits the boundary of this $\tree\rball$ after having first visited its centre $a$. Writing $H_2$ for the event that $\til{X}$ visits this boundary after time $R-1$, we have 
\begin{align*}
        \prstart{H_1}{a}\leq \prstart{d(a,\til{X}_{R-1})\geq c_1 R, H_2 }{a}  + \prstart{d(a,\til{X}_{R-1})< c_1 R }{a}
        \lesssim e^{-c_2 R},
\end{align*}  
where the last inequality follows from Lemma~\ref{lem:uniformdrift} and Remark~\ref{rem:decayofdistance}. Since by time $t\asymp \log n$, the walk will visit at most $t$ different centres of balls, by taking a union bound and choosing the constant in the definition of $R$ sufficiently large, we get that the probability of this event happening is at most~$t e^{-c_2 R}=o(1)$.
\end{proof}
We denote by  $\trel(G)$ the \emph{absolute relaxation time}  of simple random walk on a finite graph $G$, defined as the inverse of the absolute spectral gap (it equals $+\infty$ if $G$ is bipartite or not connected). 
\begin{proposition}\label{pro:couplingandtv}
In the same setup as in Definition~\ref{def:coupling},
     for all $\epsilon>0$, there exist $B$ (in the definition of $t$), $A$ (in the definition of the truncation criterion) depending on $\epsilon$ and $B$ and a positive constant $\Gamma$ sufficiently large such that for all $n$ sufficiently large, on the event $\{\B^*_K(x_0)=T_0\}$, for all $i$ and all $x\in \partial \B_K^*(x_0)$ descendants of $z_i\in \partial \B_{K/2}^*(x_0)$, on the event $\{z_i \text{ is good}\}$ we have for all $s \ge 0$ that
        \[
        \prcond{d_x(t+s)<e^{-\tfrac{s}{\trel(\G)}}\cdot \frac{\sqrt{\Delta}}{1-\epsilon}  \exp(\Gamma \sqrt{\log n}) + \epsilon}{\F_{i-1}}{}\geq 1-2\epsilon,
        \]
        where $d_x(r)=\tv{\prcond{X_r\in \cdot}{\G}{x}-\pi}$ for every $r\in \N$.
\end{proposition}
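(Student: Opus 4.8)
The plan is to combine the coupling of Definition \ref{def:coupling} (whose success probability is controlled by Lemma \ref{lem:couplingsuccess}) with a spectral/$L^2$ argument via the Poincar\'e inequality, exactly along the lines sketched in the overview. First I would run the walk $X$ on $\G$ started from $x$ together with the walk $\til X$ on the {\quasi} $\tree$ rooted at $x$; by Lemma \ref{lem:couplingsuccess}, on $\{z_i\text{ is good}\}$ the coupling is successful for $t$ steps with conditional probability at least $1-\epsilon$ given $\F_{i-1}$, and on this success event $X_t=\til X_t$. Now I would use the concentration of the entropy: by Proposition \ref{pro:entropy} and Lemma \ref{lem:speed}, with $t=\frac{\log n}{\nu\mathfrak h}-B\sqrt{\log n}$ and $B$ chosen large in terms of $\epsilon$, the law of $\til X_t$ (conditioned on the coupling succeeding and on $\B_K(\rho)=T_0$) can be written as $(1-\epsilon)\mu+\epsilon\nu'$ where every atom $x$ in the support of $\mu$ satisfies $\mu(x)\ge n^{-1}\exp(\Gamma'\sqrt{\log n})$ for a suitable $\Gamma'=\Gamma'(B,\epsilon)$; this is because the truncation event $\trunc{e}{A}$ is avoided by $\til X$ up to time $t$ (Lemma \ref{lem:truncation}), so the walk only crosses long range edges $e$ with $\til W_\tree(e)\le \log n-A\sqrt{\log n}$, and a path-probability lower bound (as in the proof of Lemma \ref{lem:iidentropy}) then bounds $\prcond{\til X_t=\cdot}{\tree}{x}$ from below on the bulk of its mass.

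Transferring this to $X$ on $\G$: on $\{z_i\text{ good}\}\cap\{\text{coupling succeeds}\}$ we have $X_t=\til X_t$ and moreover (Remark \ref{rem:stayinDi}) $X$ has stayed inside $\mathcal D_i$, whose size is at most $N=n\exp(-A\sqrt{\log n}/3)$ by Lemma \ref{lem:badandexplored}. Hence, conditionally on $\F_{i-1}$ and on $\{z_i\text{ good}\}$, with probability at least $1-2\epsilon$ the law of $X_t$ can be written as $(1-\epsilon)\mu_x+\epsilon\nu_x$ where $\mu_x$ is supported on a set of size at most $N$ and has minimal atom at least $n^{-1}\exp(\Gamma'\sqrt{\log n})$ (inheriting the lower bound from $\mu$, up to adjusting constants because the coupling success probability is only $\ge 1-\epsilon$, not $1$). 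The key consequence is an $L^2$ bound: writing $h_x=\ud\mu_x/\ud\pi$, since $\pi(y)\asymp 1/n$ (bounded degree) and $\mu_x(y)\le 1$ for atoms while the support has size $\le N$, one gets $\norm{h_x-1}_2^2=\sum_y\pi(y)(h_x(y)-1)^2\le \sum_{y}\mu_x(y)^2/\pi(y)\lesssim \Delta\,\exp(2\Gamma'\sqrt{\log n})$ (using that the largest atom, and hence $h_x$, is at most $\exp(O(\sqrt{\log n}))$ relative to $\pi$, which is where $\til W_\tree\le\log n-A\sqrt{\log n}$ and Lemma \ref{lem:twolooperasures} enter). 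Then I would apply the Poincar\'e inequality in the form $\norm{P^s h_x-1}_2\le e^{-s/\trel(\G)}\norm{h_x-1}_2$ and bound total variation by $\tfrac12\norm{\cdot}_2$: since $\prcond{X_{t+s}\in\cdot}{}{}=(1-\epsilon)\mu_x P^s+\epsilon\nu_xP^s$, we get $d_x(t+s)\le (1-\epsilon)\,\tfrac12\norm{P^sh_x-1}_2+\epsilon\le e^{-s/\trel(\G)}\cdot\tfrac{\sqrt\Delta}{1-\epsilon}\exp(\Gamma\sqrt{\log n})+\epsilon$ after absorbing constants into $\Gamma$, and this holds on $\F_{i-1}$ with conditional probability at least $1-2\epsilon$. (One should be a little careful that $d_x$ is deterministic given $\G$ but $\G$ is partially random; the bound on $\norm{h_x-1}_2$ is what holds with probability $\ge1-2\epsilon$, and $d_x(t+s)$ is then deterministically controlled by it.)

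The main obstacle I anticipate is the bookkeeping in the mixture decomposition: extracting, from the entropy concentration of Proposition \ref{pro:entropy} combined with the various "coupling fails" events (truncation, overlap, optimal-coupling failure, escape from a $\tree\rball$, wandering to a wrong $z$), a \emph{single} clean statement that the law of $X_t$ is an $\epsilon$-mixture of a measure with a uniform lower bound $n^{-1}\exp(\Gamma'\sqrt{\log n})$ on its atoms \emph{and} supported in $\mathcal D_i$ of size $\le N$ — and doing so uniformly over good $z_i$ and over descendants $x$, conditionally on $\F_{i-1}$. The tension is that the entropy lower bound naturally controls $-\log$ of the loop-erased-walk hitting probabilities rather than $\prcond{\til X_t=\cdot}{}{}$ directly, so I would route through the regeneration decomposition (Lemma \ref{lem:regenerationtimes}) and the comparison $W_\tree\ge\til W_\tree-cR^2$ (Lemma \ref{lem:twolooperasures}) to pass from $\til W$-type quantities (which is what the truncation controls) to genuine lower bounds on $\prcond{\til X_t=\cdot}{}{}$; the $cR^2=O((\log\log n)^2)$ loss is negligible against $\sqrt{\log n}$ and gets absorbed into $\Gamma$. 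Everything else — the Poincar\'e step, the bounded-degree estimates on $\pi$, and the union bounds — is routine given the lemmas already established.
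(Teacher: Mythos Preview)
Your overall architecture (couple up to time $t$, obtain an $L^{2}$ bound on the law of $X_t$, then apply the Poincar\'e inequality) is the same as the paper's. However, the step in which you derive the $L^{2}$ bound has a genuine gap.

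You correctly note that what is needed is an \emph{upper} bound on the largest atom of the (good part of the) law of $\til X_t$, i.e.\ $\max_v \mu_x(v) \lesssim n^{-1}\exp(O(\sqrt{\log n}))$. But you attribute this to the fact that $\til X$ avoids truncated edges, i.e.\ only crosses edges with $\til W_\tree(e) \le \log n - A\sqrt{\log n}$. That inequality is a \emph{lower} bound on the hitting probability $\prcond{(X_{\tau^{\ell(e)}-1},X_{\tau^{\ell(e)}})=e}{\tree}{}$, not an upper bound, and Lemma~\ref{lem:twolooperasures} only says $W_\tree \ge \til W_\tree - cR^2$, which does not help in the right direction. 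From ``all edges crossed have large hitting probability'' one cannot conclude that $\prcond{\til X_t=v}{\tree}{}$ is small; a priori almost all the mass of $\til X_t$ could sit on a single vertex. The lower bound on atoms together with the bound $|\mathcal D_i|\le N$ controls the support size (relevant for the lower bound on the mixing time), but is insufficient for the $L^2$ bound.

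The paper obtains the needed upper bound by a different route. It fixes a level $\ell=\log n/\mathfrak h - 2\nu B\sqrt{\log n}$, defines $\widehat B=\{e\in \til S_\ell:\prcond{\xi_\ell=e}{\tree}{}\le n^{-1}\exp(\Gamma\sqrt{\log n})\}$, and uses the \emph{upper} half of the entropy concentration in Proposition~\ref{pro:entropy} to show $\prcond{\til\xi_\ell\in\widehat B}{}{}\ge 1-\epsilon^2/3$. It then introduces the events $A_1=\{d(x_0,\til X_t)\ge \ell+\tfrac{\nu B}{2}\sqrt{\log n}\}$ and $A_2=\{\til\xi_\ell=(\text{loop erasure of }(\til X_r)_{r\le t})_\ell\}$ (both of high probability by Lemmas~\ref{lem:speed} and~\ref{lem:uniformdrift}), so that on $S=A_1\cap A_2\cap A_3$ one has $\{\til X_t=v\}\subseteq\{\til\xi_\ell=\phi(v)\}$ for the unique ancestor edge $\phi(v)$ at level $\ell$, whence $\prcond{\til X_t=v,\,S}{\tree}{}\le \prcond{\til\xi_\ell=\phi(v)}{\tree}{}\le n^{-1}\exp(\Gamma\sqrt{\log n})$. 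This is the missing idea: the upper bound on the atom of $\til X_t$ is obtained by projecting to the loop-erased walk at a carefully chosen earlier level and invoking the entropy upper bound there, not from the truncation criterion.

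A second point you only allude to: $d_x(t+s)$ is a function of $\G$, so the Poincar\'e step must be applied in the quenched measure. The paper handles this by first showing $\prcond{S}{\F_{i-1}}{}\ge 1-\epsilon^2$ and then, via Markov's inequality, defining $\mathcal G=\{G:\prcond{S}{\G=G}{}\ge 1-\epsilon\}$ with $\prcond{\G\in\mathcal G}{\F_{i-1}}{}\ge 1-\epsilon$; for $G\in\mathcal G$ the conditioning on $S$ costs only a factor $1/(1-\epsilon)$ and the $L^2$/Poincar\'e argument goes through. Your parenthetical acknowledges this issue but does not give the mechanism.
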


\begin{proof}[\bf Proof]

We set $\ell= \log n/\mathfrak{h} - 2\nu  B\sqrt{\log n}$ and recall that $t=\log n/(\nu\mathfrak{h}) -B\sqrt{\log n}$. 

Let $\tree$ be the {\quasi} with root $x_0$ that we reveal during the exploration process of $\G$ starting from $x$ and which satisfies that $\B_K^*(x_0)=T_0$. 
Let $\xi$ be a loop erased random walk on $\tree$ started from $z_i$ as in Definition~\ref{def:lerw}, i.e.\ it is considered only when it crosses long range edges. As in the proof of Lemma~\ref{lem:badandexplored} we let $\til{S}_k$ be the set of long range edges of $\tree$ at distance $k$ from the root that do not satisfy the truncation criterion. For a constant $\Gamma$ to be determined we define
\[
\widehat{B} = \left\{e\in \til{S}_{\ell}: \ \prcond{\xi_{\ell}=e}{\tree}{} \leq \frac{1}{n} \exp\left( \Gamma\sqrt{\log n} \right)\right\}.
\]
Let $X$ be a simple random walk on $\G$ started from $x$ and let $\til{X}$ be a simple random walk on $\tree$ started from $x$ coupled with $X$ as in Definition~\ref{def:coupling}. 
Let $\til{\xi}$ be the loop erased random walk on $\tree$ obtained by erasing loops from $\til{X}$.
 Using Proposition~\ref{pro:entropy} and Lemma~\ref{lem:truncation} (for the event that $\til{\xi}_\ell \notin \til{S}_\ell$) we get that there exist $B$ in the definition of $t$ depending on $\epsilon$, $A$ in the definition of the truncation criterion (depending on $\epsilon$ and $B$) and $\Gamma$ (depending on $\epsilon$) sufficiently large such that 
\begin{align}\label{eq:bhat}
\prcond{\til{\xi}_{\ell}\in \widehat{B}}{\F_{i-1}}{}\geq 1-\epsilon^2/3.
\end{align}
We define the following events
\begin{align*}
A_1&=\left\{d(x_0,\til{X}_t)\geq \ell + \tfrac{\nu B}{2} \sqrt{\log n}  \right\}, \\  
A_2& = \left\{ \til{\xi}_{\ell} = (\text{loop erased trace of $(\til{X}_r)_{r\leq t}$ viewed on long range edges})_{\ell}    \right\},\\
A_3&= \left\{ \text{the coupling of $X$ and $\til{X}$ succeeds for $t$ steps} \right\}\cap \left\{ \til{\xi}_{\ell} \in \widehat{B}\right\}.
\end{align*}
Lemma~\ref{lem:speed} shows that for $B$ sufficiently large we have $\prcond{A_1}{\F_{i-1}}{} \geq 1-\epsilon^2/3$. Using Lemma~\ref{lem:uniformdrift} we get that for a positive constant $c$ we have 
\[
\prcond{A_2^c}{\F_{i-1}}{}\leq \exp\left(-c B\sqrt{\log n} \right) = o(1).
\]
Using Lemma~\ref{lem:couplingsuccess} and~\eqref{eq:bhat} for the probability of the event $A_3^c$ we deduce that if $S = \cap_{i=1}^{3}A_i$, then on the event $\{z_i \text{ is good}\}$ we have  $\prcond{S}{\F_{i-1}}{} \geq 1-\epsilon^2$.  Therefore setting
\[
\mathcal{G}=\{G: \prcond{S}{\G=G}{}\geq 1-\epsilon\}
\]
and using Markov's inequality and the tower property we obtain that on the event $\{z_i \text{ is good} \}$
\[
\prcond{\G\in \mathcal{G}}{\F_{i-1}}{} \geq  1-\epsilon.
\]
Let $s>0$ to be determined later. The above inequality now gives on the event $\{z_i \text{ is good} \}$
\begin{align}\label{eq:goodgraphs}
\begin{split}
        \mathbb{P}(&\tv{\prcond{X_{t+s}\in \cdot}{\G}{x} - \pi}  \\&= \1(\G \in\mathcal{G})\tv{\prcond{X_{t+s}\in \cdot}{\G}{x} - \pi}\mid \F_{i-1}) \geq 1-\epsilon.
\end{split}
\end{align}
We now have 
\begin{align}\label{eq:sumong}
        &\1(\G \in\mathcal{G})\tv{\prcond{X_{t+s}\in \cdot}{\G}{x} - \pi} \\&= \sum_{\substack{G\in \mathcal{G}}} \1(\G=G) \tv{\prcond{X_{t+s}\in \cdot}{\G=G}{x} - \pi},
\end{align}
and hence for each $G\in \mathcal{G}$, by conditioning on the event $S$ and using the definition of $\mathcal{G}$ we obtain
\begin{align}\label{eq:tvboundepsilon}
        \tv{\prcond{X_{t+s}\in \cdot}{\G=G}{x} - \pi}\leq \tv{\prcond{X_{t+s}\in \cdot}{S,\G=G}{x} - \pi} + \epsilon.
\end{align}
We next bound the first term appearing on the right hand side above. By the Poincar\'e inequality and the fact that conditional on $X_t$, the event $S$ is independent of $(X_{u})_{u\geq t}$ we have
\begin{align}\label{eq:poincare}
\begin{split}
        \tv{\prcond{X_{t+s}\in \cdot}{S,\G=G}{x} - \pi} &\leq \norm{\prcond{X_{t+s}\in \cdot}{S,\G=G}{x} - \pi}_2\\
        &\leq e^{-\tfrac{s}{\trel(G)}} \norm{\prcond{X_{t}\in \cdot}{S,\G=G}{x}- \pi}_{2},
        \end{split}
\end{align}
  
For every vertex $v\in \tree$ with $d(x_0,v)>\ell$ there is a unique ``ancestor edge'' $\phi(v)=(\phi(v)^-,\phi(v)^+)$ with $d(x_0,\phi(v)^-)=\ell$.
On the event $S$, the walk $X$ is coupled successfully with $\til{X}$ for $t$ steps, and hence we get
\begin{align*}
& \max_{v\in V_n}  \prcond{X_t=v}{\G=G, S}{x}  =\max_{v\in V_n} \frac{\E{\1(\G=G)\prcond{X_t=v, S}{\tree, \G}{x}}}{\prstart{\G=G, S}{x}} \\
 &= \max_{v\in \tree} \frac{\E{\1(\G=G)\prcond{\til{X}_t=v, S}{\tree, \G}{x}}}{\prstart{\G=G, S}{x}}
 \\ &\leq \max_{v\in \tree} \frac{\E{\1(\G=G)\prcond{\til{\xi}_\ell=\phi(v), S}{\tree, \G}{x}}}{\prstart{\G=G, S}{x}},
\end{align*}
where in the last inequality we used that on the event $S\subseteq A_2$ we have $\til{\xi}_{\ell} = \phi(v)$.
Using the definition of the set $\widehat{B}$ and of $S$ we have for all $v\in \tree$
\begin{align*}
&\E{\1(\G=G)\prcond{\til{\xi}_\ell=\phi(v), S}{\tree, \G}{x}} \\&=\estart{\1(\G=G)\1(\phi(v) \in \widehat{B})\prcond{\til{\xi}_{\ell}=\phi(v), S}{\tree, \G}{x}}{x} \\&\leq \frac{1}{n} \exp\left( \Gamma \sqrt{\log n} \right) \prstart{\G=G}{x}.
\end{align*}
Therefore, for $G\in \mathcal{G}$ this gives 
\begin{align*}
        \max_v \prcond{X_t=v}{\G=G, S}{x} &\leq \frac{1}{\prcond{S}{\G=G}{x}}\frac{1}{n} \exp\left( \Gamma \sqrt{\log n} \right) \\&\leq\frac{1}{(1-\epsilon)n} \exp\left( \Gamma \sqrt{\log n} \right), 
\end{align*}
where for the last inequality we used the definition of $\mathcal{G}$.  Using that $\pi$ is the degree biased distribution and that $G_n$ is a graph with maximum degree $\Delta$, we obtain that $\pi(v) \geq 1/(\Delta n)$ for all $v$.  Therefore, we obtain 
\begin{align*}
\norm{\prcond{X_{t}\in \cdot}{\G=G,S}{x}- \pi}_{2} \leq \frac{\sqrt{\Delta}}{1-\epsilon} \exp\left(\Gamma \sqrt{\log n}\right).
\end{align*}
 Plugging this into~\eqref{eq:poincare} and using~\eqref{eq:tvboundepsilon}, \eqref{eq:sumong} and~\eqref{eq:goodgraphs} we obtain on the event $\{z_i \text{ is good}\}$
\begin{align*}
\prcond{d_x(t+s) \leq e^{-\tfrac{s}{\trel(\G)}} \cdot  \frac{\sqrt{\Delta}}{1-\epsilon} \exp\left(\Gamma \sqrt{\log n}\right) + \epsilon}{\F_{i-1}}{} \geq 1-2\epsilon
\end{align*}
and this concludes the proof.
\end{proof}

\begin{lemma}\label{lem:exponentialdecayxi}
        There exists a positive constant $c$ so that for all quasi trees~$\tree$ rooted at $\rho$, all $\ell\in \N$ and all $x$ with $d(\rho,x)=\ell$, 
        if $X$ is a simple random walk started from $\rho$ and $\tau_\ell$ is the first hitting time of level $\ell$, then 
        \[
        \pr{X_{\tau_\ell}=x} \leq e^{-c\ell}.
        \]
\end{lemma}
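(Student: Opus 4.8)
The plan is to reduce the probability to a product of $\ell$ per-level factors, each of which I will bound by $1-c$ for a constant $c=c(\Delta)$.

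First, observe that $X_{\tau_\ell}$ is always the centre of some level-$\ell$ ball: each long range edge crossing changes the long range distance by $\pm1$, and a crossing that raises it to $\ell$ lands at the centre of a child ball. So if $x$ is not such a centre the probability is $0$, and I may assume $x=c_\ell$, the centre of the level-$\ell$ ball $B_\ell$ on the unique path of balls $B_0\ni\rho=c_0,\ B_1,\dots,B_\ell$ from the root to $x$; write $c_i$ for the centre of $B_i$, $v_i\in B_i$ for the vertex whose long range edge leads to $c_{i+1}$ (so $v_\ell=x$), and $E_{i+1}=\{v_i,c_{i+1}\}$. I may also assume $\ell\ge1$. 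Then $\{X_{\tau_\ell}=x\}=\{(X_{\tau_\ell-1},X_{\tau_\ell})=E_\ell\}=\{E_\ell\in\xi\}$, where $\xi$ is the loop erasure of $(X_t)_{t\le\tau_\ell}$, exactly as at the start of the proof of Lemma~\ref{lem:twolooperasures}. Applying the domain Markov property (Lemma~\ref{lem:domainmarkov}), precisely as in the computation leading to~\eqref{eq:looperasurestopped}, then yields the factorisation
\[
\pr{X_{\tau_\ell}=x}=\prod_{i=0}^{\ell-1}q_i,\qquad q_i:=\prstart{\til{X}^i_{L_i}=v_i}{c_i},
\]
where $\til{X}^i$ is a simple random walk on $\tree(c_i)$ started from $c_i$ and $L_i$ is the last time, before $\til{X}^i$ reaches level $\ell$, that $\til{X}^i$ lies in $B_i$. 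It therefore suffices to find a constant $c=c(\Delta)\in(0,1)$ with $q_i\le 1-c$ for all $i$ and every quasi tree, for then $\pr{X_{\tau_\ell}=x}\le(1-c)^\ell\le e^{-c\ell}$.

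To bound $q_i$ I would exhibit an event $\mathcal G$ of probability $\ge c(\Delta)$ on which $\til{X}^i$ leaves $B_i$ for the last time (before reaching level $\ell$) through a vertex other than $v_i$. Since the connected components of $G_n$ have size at least $3$ and $R$ is large, $B_i$ is a connected graph on at least $3$ vertices, so there is a non-centre vertex $z\ne v_i$ of $B_i$ at distance at most $2$ from $c_i$ inside $B_i$: if $c_i$ has a neighbour in $B_i$ other than $v_i$, take it; otherwise $c_i$ is a leaf of $B_i$, so $B_i\setminus\{c_i\}$ is connected and contains a neighbour $z$ of $v_i$ distinct from $c_i$. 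Let $z'$ be the centre of $z$'s child ball. On $\mathcal G$ the walk goes from $c_i$ to $z$ along this short path (cost $\ge(\Delta+1)^{-2}$), steps across $z$'s downward long range edge to $z'$ (cost $\ge(\Delta+1)^{-1}$), and thereafter never hits $z$; by Lemma~\ref{lem:uniformdrift} applied at $z'$ (whose parent is $z$) this last event has probability $\ge c$. Since $z'\leftrightarrow z$ is the only edge joining $\tree(z')$ to the rest of $\tree(c_i)$, on $\mathcal G$ the walk stays in $\tree(z')$ after reaching $z'$, hence never returns to $B_i$; and being confined to the quasi tree $\tree(z')$ it reaches level $\ell$ almost surely (transience, Remark~\ref{rem:decayofdistance}, which holds for every realisation). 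Consequently, on $\mathcal G$, the walk's last visit to $B_i$ before level $\ell$ is precisely the visit to $z$, so $\til{X}^i_{L_i}=z\ne v_i$; thus $q_i\le 1-c(\Delta+1)^{-3}$, as needed.

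The only genuinely delicate point is keeping this escape probability bounded below by a constant \emph{independent of $R$} (equivalently of $n$): the obstruction is that $|B_i|$ can be as large as $\Delta^R\gg1$, so one cannot afford to route the walk to a far vertex of $B_i$. The hypothesis that $G_n$ has no component of size $1$ or $2$ is used exactly here---it forces $|B_i|\ge3$ and hence the existence of a usable $z\ne v_i$ within two steps of $c_i$, which makes each $q_i$ at most $1-c(\Delta)$ and the product exponentially small in $\ell$.
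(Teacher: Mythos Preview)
Your proof is correct and takes essentially the same approach as the paper's: both factorize $\pr{X_{\tau_\ell}=x}$ via the domain Markov property into $\ell$ per-level terms and bound each away from $1$ by exhibiting, with probability at least $c(\Delta)$, an escape from $B_i$ through some $z\ne v_i$ followed by non-return, using Lemma~\ref{lem:uniformdrift} and the hypothesis that components have size $\ge 3$. The only difference is cosmetic---you construct the escape path to a specific $z$ within two steps of $c_i$, while the paper phrases the same step via the first exit from $B_i$ and the bound $\prstart{X^i_{E_i}\neq x_{i+1}}{y_i}\ge c'$.
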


\begin{proof}[\bf Proof]

Let $\xi$ be the loop erasure of the path $\{X_0,\ldots, X_{\tau_\ell}\}$. Let $e$ be the long range edge whose endpoint further from the root is $x$. Then 
\[
\pr{X_{\tau_\ell}=x} = \pr{\xi_\ell = e}.
\]
We let $e_1,\ldots, e_\ell=e$ be the sequence of long range edges leading from $\rho$ to $e$. We write $e_i=(x_i,y_i)$ with $d(\rho,x_i)<d(\rho,y_i)$. Using Lemma~\ref{lem:domainmarkov} we obtain
\begin{align}\label{eq:prodlerw}
        \pr{e\in \xi} =\prod_{i=1}^{\ell} \prstart{X^i_{L_i}=x_{i+1}}{y_i},
\end{align}
where $X^i$ is a simple random walk on $\tree(y_i)$ and $L_i$ is the last time before reaching level $\ell$ of $\tree$ that  $X^i$ is in the $\tree\rball$ centred at $y_i$ as in Lemma~\ref{lem:domainmarkov}. Writing $E_i$ for the first time $X^i$ leaves the $\tree\rball$ centred at $y_i$, we get
\begin{align*}
         \prstart{X^i_{L_i}=x_{i+1}}{y_i}  &= 1-\sum_{z\neq x_{i+1}} \prstart{X^i_{L_i}= z}{y_i}
       \\ & \leq 1-\sum_{z\neq x_{i+1}} \prstart{X_{E_i}^i=z, \tau^i_{z}=\infty}{y_i} \leq 1-c \prstart{X^i_{E_i}\neq x_{i+1}}{y_i}, 
\end{align*}
where $\tau^i_z$ stands for the first return time to $z$ after $E_i$ and in the last inequality we used Lemma~\ref{lem:uniformdrift}. Using the bounded degree assumption and that every connected component of $G_n$ contains at least $3$ vertices we deduce
\[
\prstart{X^i_{E_i}\neq x_{i+1}}{y_i}\geq c',
\]
where $c'$ is a positive constant. Therefore, this now implies that 
\begin{align*}
        \prstart{X^i_{L_i}=x_{i+1}}{y_i}\leq 1-cc',
\end{align*}
and hence plugging this into~\eqref{eq:prodlerw} finishes the proof.
\end{proof}

\begin{lemma}\label{lem:hitkroot}
There exists a positive constant $\alpha$, so that starting from any vertex the random walk will hit a $K$-root by time $\alpha K$ with probability $1-o(1)$ as $n\to\infty$.
\end{lemma}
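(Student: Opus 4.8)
The plan is to show that $G_n^*$ contains a large (positive proportion) set of $K$-roots and that the random walk hits this set quickly from any starting point, using the expander property of $G_n^*$.

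\textbf{Step 1: Most vertices are $K$-roots.} First I would show that w.h.p. the number of vertices of $G_n^*$ that are \emph{not} $K$-roots is $o(n)$ — in fact at most $n^{1-\delta}$ for some $\delta>0$. A vertex $x$ fails to be a $K$-root precisely when the ball $\B_K^*(x)$ contains a ``collision'': either two long range edges explored within distance $K$ of $x$ land (after revealing their $R$-balls in $G_n$) in overlapping regions, or an explored $R$-ball in $G_n$ revisits an already revealed one, so that the explored neighbourhood is not a possible realisation of the first $K$ levels of a quasi tree. Since the number of vertices within long range distance $K$ of $x$ is at most $\Delta^{R(K+1)} = \exp(O((\log\log n)^2)) = n^{o(1)}$, and each long range edge is matched to a uniformly chosen partner, the probability that any such collision occurs is at most $(\text{number of explored vertices})^2 \cdot n^{o(1)}/n = n^{o(1)}/n$. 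A first moment bound then gives that the expected number of non-$K$-roots is $n^{o(1)}$, so by Markov w.h.p. there are at most $n^{1/2}$ (say) non-$K$-roots; in particular w.h.p. at least $n/2$ vertices are $K$-roots.

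\textbf{Step 2: Fast hitting of the $K$-root set via the spectral gap.} Let $\cK$ denote the set of $K$-roots and let $B=V_n\setminus \cK$ be the ``bad'' set, which by Step 1 has $\pi(B)\le |B|\Delta/|V_n| = o(1)$ w.h.p. By Proposition~\ref{pro:expander}, w.h.p. $\trel(G_n^*)\le 1/\alpha$ is bounded. A standard bound (e.g.\ via the spectral profile, or simply the fact that for a reversible chain with spectral gap $\gamma$ and a set $B$ with $\pi(B)\le 1/2$ one has $\estart{\tau_{B^c}}{x}\lesssim \trel \log(1/\pi(x))$, or more cleanly $\prstart{\tau_{\cK}>m}{x}\le \prstart{X_s\in B\ \forall s\le m}{x}$ which decays once $m$ exceeds a constant multiple of $\trel$ because the chain started anywhere spreads out) shows that from any $x$ the hitting time of $\cK$ is stochastically dominated by a geometric-type random variable with mean $O(\trel \cdot \log(1/\pi_{\min})) = O(\log n)$ — but this is too weak; we want time $O(K)=O(\log\log n)$.

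\textbf{Step 3: Sharpening to time $O(K)$.} To get the hitting time down to $\alpha K$ rather than $O(\log n)$, I would not use $\pi(B)=o(1)$ globally but instead argue locally: from any vertex $x$, within $O(K)$ steps the walk on $G_n^*$ with good probability crosses enough long range edges to ``escape'' the region around $x$, and most such escapes land at $K$-roots. More precisely, following the strategy used for the coupling (Lemma~\ref{lem:uniformdrift}, Remark~\ref{rem:decayofdistance}), the walk on the quasi-tree approximation travels at positive speed in long range distance, so in time $cK$ it crosses order $K$ long range edges; the endpoint of a freshly revealed long range edge is a uniformly random vertex, hence is a $K$-root with probability $1-n^{o(1)}/n = 1-o(1)$ by Step 1. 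I would make this rigorous by running the exploration/coupling of Definition~\ref{def:coupling} for just $O(K)$ steps (far below the time $t\asymp\log n$), where the coupling with the quasi tree succeeds with probability $1-o(1)$ by Lemma~\ref{lem:couplingsuccess}, and on the quasi tree the walk reaches long range distance $\ge 1$ (i.e.\ enters a freshly-sampled $R$-ball whose center's $K$-neighbourhood, having been sampled essentially independently, is w.h.p. collision-free hence a $K$-root) within $O(1)$ steps with constant probability, so within $O(K)$ steps with probability $1-e^{-cK}=1-o(1)$. Then transfer back: the corresponding vertex in $G_n^*$ is the revealed endpoint, which is a $K$-root unless an overlap occurred — an event of probability $o(1)$ over $O(K)$ steps by the computation in Lemma~\ref{lem:couplingsuccess}.

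\textbf{Main obstacle.} The delicate point is Step 3: making the hitting time $O(K)$ rather than merely $O(\log n)$. The bounded-spectral-gap argument only gives $O(\log n)$, so one genuinely needs the local structure — that a short walk quickly reaches a freshly sampled (hence, w.h.p., ``clean'') $R$-ball — and one must be careful that the exploration has only revealed $\exp(O((\log\log n)^2))=n^{o(1)}$ vertices after $O(K)$ steps, so that a freshly revealed endpoint is a $K$-root with probability $1-n^{o(1)}/n$; a union bound over the $O(K)$ steps then still leaves failure probability $o(1)$. The worst-case-over-starting-point quantifier requires this estimate to be uniform in $x$, which it is, since the exploration argument does not use anything about $x$ beyond the bounded-degree assumption.
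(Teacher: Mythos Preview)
Your Step~2 realizes correctly that the spectral-gap route only gives $O(\log n)$, which is too weak, and your Step~3 has the right instinct---use local structure and drift---but the implementation has a circularity. You propose to ``run the exploration/coupling of Definition~\ref{def:coupling}'' and to invoke Lemma~\ref{lem:couplingsuccess}, but that coupling is \emph{defined only once $x_0$ is already a $K$-root}: its very first line conditions on $\B_K^*(x_0)=T_0$ being a realisation of the first $K$ levels of a quasi tree. So you cannot use that machinery to \emph{reach} a $K$-root from an arbitrary starting vertex.

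The paper's argument avoids this circularity by a different structural control. Rather than showing (as in your Step~1) that most vertices are $K$-roots, it looks at a \emph{larger} ball $\B_{\beta K}^*(x)$ with $\beta\ge 3$ and shows that the number of overlaps there is stochastically dominated by a binomial with $\pr{I\ge 2}=n^{-2+o(1)}$; a union bound over all $n$ vertices then gives that w.h.p.\ \emph{every} vertex has at most one overlap in its $\beta K$-ball. This is the key point you are missing: a per-vertex $n^{-2+o(1)}$ bound (not just a first-moment $o(1)$ bound) so that the statement holds simultaneously for all starting points. If there is no overlap in $\B_{\beta K}^*(x)$ then $x$ is itself a $K$-root and there is nothing to prove. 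If there is exactly one overlap, the local graph is a quasi tree with a single defect, and the drift computation of Lemma~\ref{lem:uniformdrift} (looking at $\til d(\rho,X_{3t})$) goes through essentially verbatim---having two centres at the defect does not spoil the submartingale estimate---so the walk escapes the overlap and reaches a $K$-root in $O(K)$ steps. Your Step~1 (showing $\le n^{1/2}$ non-$K$-roots globally) does not give this, because it says nothing about the local neighbourhood of a \emph{bad} starting vertex.
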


\begin{proof}[\bf Proof]
Let $\beta\geq 3$ and suppose the random walk starts from $x$. We say that an overlap appears in $\B_{\beta K}^*(x)$, if there exist distinct vertices $y,z\in \B_{\beta K}^*(x)$  and a pair of long range edges $(y,y')$ and $(z,z')$ such that $\B_{G_n}(y',R)\cap \B_{G_n}(z',R)\neq \emptyset$. 
Let $M\leq \Delta^{\beta RK+1}$ be the number of points in $\B_{\beta K}^*(x)$. The probability that an overlap appears when exploring the long range edge attached to a vertex~$y$ is upper bounded by $\Delta^R M/n$. Therefore, the number of overlaps $I
$ in $\B_{\beta K}^*(x)$ is stochastically dominated by a binomial random variable ${\rm Bin}(M, \Delta^RM/n)$ and we have 
\begin{align*}
        \pr{I\geq 2}\leq {M\choose 2} \left(\frac{\Delta^R M}{n}\right)^2\leq \frac{\Delta^{2R} M^3}{n^2}  = n^{-2+o(1)},
\end{align*} 
using that $R+ K=O(\log \log n)$. Taking a union bound over all vertices of $G$ we get that with probability $1-o(1)$ the number of overlaps in the $\beta K$ ball around every vertex is at most~$1$. 
Therefore, if there is no overlap in $\B_{\beta K}^*(x)$, then the vertex $x$ is a $K$-root and we are done. If there is one overlap, then we consider the downward distance from the overlap at times that are multiples of $3$ exactly in the same way as in the proof of Lemma~\ref{lem:uniformdrift}. The rest of the proof of Lemma~\ref{lem:uniformdrift} follows verbatim, since having two centres in the overlap does not affect the proof that the drift is strictly positive.
\end{proof}

\begin{proof}[\bf Proof of Theorem~\ref{thm:1}]

Recall the definition of $t$ from~\eqref{eq:defoft}
\[
t=\frac{\log n}{\nu\mathfrak{h}} - B \sqrt{\log n},
\]
where $B$ is a positive constant to be chosen later.
We first prove the upper bound on the mixing time. 
Let $s= \trel(\G)\left[ \Gamma \sqrt{\log n} +\log \frac{\sqrt{\Delta}}{\eps(1-\eps)}\right]$, where $\Gamma$ is as in Proposition~\ref{pro:couplingandtv} so that
\[
e^{-\tfrac{s}{\trel(\G)}} \cdot\frac{\sqrt{\Delta}}{1-\epsilon} \exp\left(\Gamma\sqrt{\log n} \right) =\epsilon.
\] 
We claim that it suffices to prove that w.h.p.
\begin{align}\label{eq:whptmixtrel}
\tmixx{\G}{7\epsilon} \leq t+s + (\alpha +c)K,
\end{align}
where  $\alpha$ is as in Lemma~\ref{lem:hitkroot} and $c$ is a positive constant to be determined later. Indeed, one can then easily finish the proof, since by Proposition \ref{pro:expander} (whose proof is deferred to Section~\ref{s:expander}) there exists some constant $\hat \alpha>0$ such that w.h.p. its absolute relaxation time $ \trel(\G) $ is at most $1/\hat \alpha $. Hence this together with~\eqref{eq:whptmixtrel} gives the desired upper bound on $\tmixx{\G}{\epsilon}$.

We now prove~\eqref{eq:whptmixtrel}.
By Lemma~\ref{lem:hitkroot}, the strong Markov property (applied to the first hitting time of a $K$-root) and the fact that the total variation distance from stationarity is non-increasing, we have w.h.p.
\begin{align*}
        &\max_x \tv{\prcond{X_{t+s+(\alpha+c) K}\in \cdot}{\G}{x} - \pi}  \\&\leq \max_{x_0: \ K\text{-root}} \tv{\prcond{X_{t+s+c K}\in \cdot}{\G}{x_0} - \pi}  + o(1).
\end{align*}
From now on we fix $x_0$ a $K$-root of $\G$ and set
\[
V =\left \{x\in \partial \B^*_K(x_0): d_x(s+t) \geq 2 \epsilon \right\}.
\]
(Note that  this is a random set which depends on $G_n^*$.) Letting $\tau_K$ be the first hitting time of $\partial \B_K^*(x_0)$, we claim that it suffices to prove that 
\begin{align}\label{eq:hitV}
\pr{\prcond{X_{\tau_K}\in V}{\G}{x_0}>4\epsilon, \ x_0 \text{ is a $K$-root}}  \leq         \frac{1}{n^2}.
\end{align}
Indeed, this will imply that 
\begin{align*}
\pr{\exists  \ \text{a $K$-root} \ x_0:  \prcond{X_{\tau_K}\in V}{\G}{x_0}>4\epsilon} \leq \frac{1}{n}
\end{align*}
and then the proof will follow easily, since using the strong Markov property and the non-decreasing property of the total variation distance from stationarity we get for any $K$-root $x_0$ and any $T_0$, on the event $\{\B^*_K(x_0)=T_0\}$
\begin{align*}
&        \tv{\prcond{X_{t+s+c K}=\cdot }{\G}{x_0} - \pi} \leq \prcond{\tau_{\partial T_0}>cK}{\G}{x_0}\\
         & + \sum_{z\in \partial T_0} \prcond{X_{\tau_{\partial T_0}}=z}{\G}{x_0}\cdot \tv{\prcond{X_{t+s}=\cdot }{\G}{z}-\pi} \\
        & \leq o(1) + \prcond{X_{\tau_{K}}\in V}{\G}{x_0} + 2\epsilon,
\end{align*}
where the second inequality follows from taking $c$ sufficiently large and using Lemma~\ref{lem:uniformdrift} for the first term and the definition of the set $V$ for the bound on the sum.

We now prove~\eqref{eq:hitV}.
We write $h(x) = \prcond{X_{\tau_K}=x}{\G}{x_0}$ to simplify notation. 
As in Definition~\ref{def:coupling} let $V_{z_i}$ be the set of descendants of $z_i$ in $\partial \B_K^*(x_0)$ and $L=|\partial \B_{K/2}^*(x_0)|$. Recalling from the same definition the notions of bad and good vertices on $\partial \B_{K/2}^*(x_0)$ we get
\begin{align}\label{eq:decomposh}
\begin{split}
        h(V)  = \sum_{i=1}^{L} h( V\cap V_{z_i}) = \sum_{i=1}^{L}h(V\cap V_{z_i})\1(z_i \text{ is good}) \\+ \sum_{i=1}^{N}h(V\cap V_{z_i})\1(z_i \text{ is bad}).
        \end{split}
\end{align}
Let $\xi$ denote the loop erasure of $(X_t)_{t\leq \tau_K}$. Then by Lemma~\ref{lem:uniformdrift} on the event $\{\B_K^*(x_0)=T_0\}$ we have 
 \begin{align*}
        \prcond{X_{\tau_K}\in V_{z_i}}{\G}{x_0} =\prcond{\xi_{K-1} \in V_{z_i}}{\G}{x_0}= \prcond{\xi_{K/2-1}=z_i}{\G}{x_0} \leq e^{-cK/2},
\end{align*}
 where the last inequality follows from Lemma~\ref{lem:exponentialdecayxi} and $c$ is a positive constant.
Choosing the constant $C_2$ in the definition of $K$ sufficiently large we get that for all $i$
\begin{align}\label{eq:boundonhvzi}
h(V_{z_i}) \leq \frac{1}{(\log n)^2}.
\end{align}
Using also Lemma~\ref{lem:badandexplored} we now obtain
\begin{align}\label{eq:nobad}
        \prcond{\sum_{i=1}^{L}h(V\cap V_{z_i})\1(z_i \text{ is bad})\leq \frac{C}{(\log n)^{3/2}}}{ \B_K^*(x_0)=T_0}{} \geq 1-\frac{1}{n^2},
\end{align} 
where $C$ is the constant from Lemma~\ref{lem:badandexplored}.
We now turn to the first term on the right hand-side of~\eqref{eq:decomposh}. We start by writing each term of the sum as
\begin{align*}
        h(V\cap V_{z_i})\1(z_i \text{ is good}) = \sum_{x\in V_{z_i}} h(x) \1(d_x(t+s)\geq 2\epsilon) \1(z_i \text{ is good}).
\end{align*}
So by Proposition~\ref{pro:couplingandtv} and our choice of $s$ we have

\begin{align}
\label{e:2epshV}
        \econd{h(V\cap V_{z_i})\1(z_i \text{ is good}) }{\F_{i-1}}{}\1(\B_K^*(x_0)=T_0)\leq 2\epsilon h(V_{z_i}).
\end{align}
Writing $R_i$ for the random variable appearing in the conditional expectation above, we consider the martingale defined conditionally on $\B_K^*(x_0)=T_0$ via $M_0=0$ and for $1 \le k\le L$
\[
M_k=\sum_{i=1}^{k} \left( R_i  - \econd{R_i}{\F_{i-1}} \right).
\]
Applying then the Azuma-Hoeffding inequality to this martingale we obtain that for a positive constant $c$ 
\begin{align*}
        &\prcond{\sum_{i=1}^{L}h(V\cap V_{z_i})\1(z_i \text{ is good}) >3\epsilon}{\B_K^*(x_0)=T_0}{} \\& \le \prcond{M_L > \eps }{\B_K^*(x_0)=T_0}{} \leq \exp\left(-\frac{c\epsilon^2}{\sum_{i=1}^{L}(h(V_{z_i}))^2} \right)
        \leq \exp\left(-c\epsilon^2 (\log n)^2 \right),
\end{align*}
where for the first inequality we used \eqref{e:2epshV} and for the last inequality we used that 
        \[
        \sum_{i=1}^{L}(h(V_{z_i}))^2\leq \frac{1}{(\log n)^2} \sum_{i=1}^{L} h(V_{z_i}) =\frac{1}{(\log n)^2},
        \]
which follows from~\eqref{eq:boundonhvzi} (we also used that  $|M_{i}-M_{i-1}| \le h(V_{z_i})$ for all $i \le L$ and conditioned on $\B_K^*(x_0)=T_0$, we have that $h(V_{z_i})$ is deterministic). This shows that $h(V) \le 3\eps+o(1)$ with probability at least $1-2/n^2$, thus concluding the proof of the upper bound on the mixing time.

We now prove the lower bound. We employ the same notation as in the proof of the upper bound.
Suppose  the walk starts from a vertex $x_0$ which is a $K$-root. Recall that  $\mathcal{D}_i$ is collection of vertices of $\G$ explored in the exploration process of the set $V_{z_i}$. On the event that $x_0$ is a $K$-root, set
\[
V' =\left \{z_{i}\in \partial \B^*_{K/2}(x_0): \mathbb{P}_{z_i}( T_{(\mathcal{D}_i\cup\B_K^*(x_0))^{c}} \le t,   X_{\tau_K} \in V_{z_i}\mid \G) \geq 2 \epsilon \right\}
\]
(recall that $\tau_K$ is the first hitting time of $\partial \B_K^*(x_0)$). Let $\mathcal{D}(x_{0})=(\cup_{i=1}^{L}\mathcal{D}_i)\cup\B_K^*(x_0) $ and $\widehat{V}=\cup_{z \in V'}V_z$. By the strong Markov property, as well as the fact that if $x_0$ is a $K$-root and $z_{i} \in \partial \B_{K/2}^*(x_0)$, then starting from $x_0$ a walk must visit $z_i$ prior to time $\tau_K$ in order to have that $X_{\tau_K} \in V_{z_i}$,  on the event that $x_0$ is a $K$-root, we get that  
\begin{align*}
  \mathbb{P}_{x_0}( T_{\mathcal{D}(x_{0})^{c}} \le t,X_{\tau_K} \in V_{z_i}   \mid \G)   
\le \mathbb{P}_{z_i}( T_{(\mathcal{D}_i\cup\B_K^*(x_0))^{c}} \le t,X_{\tau_K} \in V_{z_i}   \mid \G).     \end{align*}
Recalling that $h(x)=h_{x_{0}}^{\G} (x)= \prcond{X_{\tau_K}=x}{\G}{x_0}$, and summing over $i \in [L] $ we see that
\[\mathbb{P}_{x_0}( T_{\mathcal{D}(x_{0})^{c}} \le t\mid \G) \le 2\varepsilon+\prcond{X_{\tau_K}\in\widehat{V} }{\G}{x_0}=2\varepsilon +h_{x_{0}}^{\G} (\widehat{V}) \]

 (on the event that $x_0$ is a $K$-root). We claim that it suffices to prove that 
\begin{align}\label{eq:hitVlower}
\pr{h_{x_{0}}^{\G} (\widehat{V})>4\epsilon, \ x_0 \text{ is a $K$-root}}  \leq         \frac{1}{n^2}.
\end{align}
Indeed, by a union bound, this will imply that 
\begin{align*}
\pr{\exists  \ \text{a $K$-root} \ x_0:  \prcond{X_{t}\notin \mathcal{D}(x_{0}) }{\G}{x_0}>6\epsilon} \leq \frac{1}{n}.
\end{align*}
The proof of the lower bound could then be concluded by noting that (i) by Lemma \ref{lem:hitkroot} $K$-roots exist w.h.p., and (ii) by Lemma \ref{lem:badandexplored}  $|\mathcal{D}(x_{0})|=o(n)$ and hence by the bounded degree assumption $\pi(\mathcal{D}(x_{0}))=o(1)$.  Indeed, we would get that with probability $1-o(1)$ there exists a $K$-root $x_0$ so that
\[
\tv{\prcond{X_t\in \cdot }{\G}{x_0} - \pi} \geq\prcond{X_t\in  \mathcal{D}(x_0) }{\G}{x_0} - \pi( \mathcal{D}(x_0)) \geq 1-6\epsilon -o(1).
\]
So it remains to prove \eqref{eq:hitVlower}. Using Remark~\ref{rem:stayinDi} together with Lemma~\ref{lem:couplingsuccess} we obtain 
\begin{align*}
        \econd{h( V_{z_i})\1(z_i \in V'  \text{ and is good}) }{\F_{i-1}}{}\1(\B_K^*(x_0)=T_0)\leq 2\epsilon h(V_{z_i}).
\end{align*}
Writing $R_i'$ for the random variable appearing in the conditional expectation above, we consider the martingale defined conditional on $\B_K^*(x_0)=T_0$ as  $M_0'=0$ and  $M_k'=\sum_{i=1}^{k} \left( R_i'  - \econd{R_i'}{\F_{i-1}} \right)$ for $1 \le k\le L$.
Applying the Azuma-Hoeffding inequality to this martingale we obtain exactly as in the proof of the upper bound that for some positive constant $c$ we have that
\begin{align*}
        &\prcond{\sum_{i=1}^{L}h( V_{z_i})\1(z_i \in V'  \text{ and is good})  >3\epsilon}{\B_K^*(x_0)=T_0}{} \\& \le \prcond{M_L' > \eps }{\B_K^*(x_0)=T_0}{} \leq \exp\left(-\frac{c\epsilon^2}{\sum_{i=1}^{L}(h(V_{z_i}))^2} \right)
        \leq \exp\left(-c\epsilon^2 (\log n)^2 \right).
\end{align*}
Using  $h(\widehat{V}) \le \sum_{i=1}^{L}h( V_{z_i})\1(z_i \in V'  \text{ and is good})+\sum_{i=1}^{L}h( V_{z_i})\1(z_i  \text{ is bad})$ (analogously to \eqref{eq:decomposh}) together with \eqref{eq:nobad} concludes the proof of \eqref{eq:hitVlower} and thus of the lower bound.
\end{proof}
\begin{remark}
\label{r:beststartingpoint} It is not hard to show that w.h.p.\ $\G$ satisfies  for some constant $\beta \ge 3 $ that for all  $x$ we have that if $W_x$ is the collection of $K$-roots at distance at most $\beta K$ from $x$ then \[\mathbb{P}_x(X_t \in \cup_{w \in W_x}\mathcal{D}(w) \mid \G ) \ge 1-7 \eps. \]
This means that w.h.p.\ $\min_x d_x(t) \ge 1-8 \eps$ (as deterministically $\max_x|\cup_{w \in W_x}\mathcal{D}(w)|=o(n) $).
\end{remark}
\subsection{A family of examples demonstrating the necessity of the degree assumption}\label{s:example}

Consider a random $d_n$-regular   graph $G_n'$ of size $n$, where $  \frac{\log {n}}{\log \log {n}} \lesssim d_n = n^{o(1)}$. Now obtain a new graph $G_n$ by adding a clique of size $d_{n}$ and connecting a single vertex of the clique to one vertex of $G_n$ by an edge. One can verify that the mixing time of $G_n^*$ is of order $d_n$ and that there is no cutoff since starting from the clique, the time it takes the walk to first exit the clique stochastically dominates the Geometric distribution with mean $d_n/2$. To see this, observe that the walk on $G_n^*$ exits the clique in $O(d_n)$ steps, and is unlikely to return to it in the following $2\log_{d_n}{n}=O(d_n) $ steps. Hence for the following $2\log_{d_n}n$ steps the walk can be coupled with that on the induced graph (w.r.t.\ $G_n^*$) on the vertices of $G_n'$. 
This graph is similar to a random graph with a given degree sequence in which $n-d_n$ vertices have degree $d_n+1$ while the rest have degree $d_n$ (we write ``similar" as it need  not be a simple graph).  In fact, the walk is unlikely to visit any degree $d_n$ vertices during these $2\log_{d_n}n$ steps (other than when just leaving the clique) or vertices belonging to cycles of size $2$ by this time, and thus the argument from \cite[Corollary 4]{LS:cutoff-random-regular} (asserting the mixing time of a random $d_{n}+1 \gg 1$ regular graph on $M$ vertices  is $(1+o(1))\log_{d_n}M$) applies here.

\section{Expander}
\label{s:expander}
In this section we prove Theorem~\ref{t:expander} which is a more quantified version of
Proposition \ref{pro:expander}. 

We denote the second largest eigenvalue of a matrix $P$ by $\lambda_2(P)$ and its smallest eigenvalue by $\lambda_{\min}(P)$. When $P$ is the transition matrix of simple random walk on a graph $G$ we write  $\lambda_2(G)$ and $\lambda_{\min}(G)$ for   $\lambda_2(P)$ and $\lambda_{\min}(P)$, respectively.    

\begin{theorem}
\label{t:expander}
Let $G=(V,E)$ be an $n$-vertex graph of maximal degree $\Delta$. Assume that all connected components of $G$ are of size at least $3$. Let $G^*$ be the graph obtained from $G$ by picking a random perfect matching of $V$ (if $n$ is odd, one vertex remains unmatched) and adding edges between matched vertices. Then there exists some $\alpha=\alpha(\Delta) \in (0,1)$ such that
\begin{equation*}
\pr{1-\lambda_2(G^{*} )\leq \alpha} \lesssim n^{- \alpha} \quad  \text{ and } \quad 
\pr{1+\lambda_{\min}(G^{*} )\leq \alpha} \lesssim n^{- \alpha}. 
\end{equation*}
\end{theorem}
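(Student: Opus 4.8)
The plan is to control $\lambda_2$ and $\lambda_{\min}$ of $G^*$ by comparison with the configuration model, exploiting the hypothesis that every connected component of $G$ has at least $3$ vertices. First I would bound $\lambda_2(G^*)$. The key point is that $G^* = (V, E \cup E')$ always \emph{contains} as a spanning subgraph a graph whose random part is (close to) a configuration model with minimum degree at least a constant. Concretely, orient within each component of $G$ a subset of edges so that every vertex receives at least one ``$G$-stub'' (possible precisely because components have size $\ge 3$, so no vertex is isolated and we can route stubs along a spanning structure of each component), and combine these with the single matching half-edge at each vertex; the union of $E'$ and the selected $G$-edges then dominates (in the sense of Dirichlet forms / conductances) a random graph with a given degree sequence with all degrees in $[2,\Delta']$. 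Since simple random walk on $G^*$ is reversible with stationary measure proportional to degree, and adding edges only increases the Dirichlet form relative to the same (or comparable) stationary measure, one gets $1-\lambda_2(G^*) \gtrsim 1-\lambda_2(\text{CM})$ up to a bounded-degree comparison constant. It is classical (e.g.\ Bollob\'as, or Friedman-type bounds, or the bounds used in \cite{LS:cutoff-random-regular,BhLP:comparing-mixing}) that a configuration model with minimum degree $\ge 2$ (in fact $\ge 3$ makes it cleaner) is an expander with probability $1-n^{-\Omega(1)}$. I would cite such a result and transfer it via the comparison.

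The genuinely harder half is the bound on $1+\lambda_{\min}(G^*)$, i.e.\ ruling out an eigenvalue near $-1$; this is why more effort is needed for the \emph{absolute} spectral gap. An eigenvalue close to $-1$ corresponds to near-bipartiteness, and the issue is that $G$ itself might be bipartite (indeed $G$ could be a disjoint union of paths of length $2$), so the near-bipartite obstruction must be destroyed by the random matching. I would argue as follows: by a Cheeger-type / bipartiteness-ratio inequality (e.g.\ Desai--Rao, or the ``dual Cheeger inequality'' of Trevisan / Bauer--Jost), $1+\lambda_{\min}(G^*)$ is comparable to the minimum over all partitions $V = A \sqcup B \sqcup C$ of a bipartiteness ratio $\beta(A,B,C)$ measuring the density of edges \emph{within} $A$, within $B$, and into $C$, normalised by volume. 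It then suffices to show that w.h.p.\ $G^*$ has no such partition with small $\beta$. For a fixed candidate ``near-bipartition'' $(A,B)$ one estimates the probability that the random matching $E'$ adds few monochromatic edges: each vertex has a matching edge landing uniformly (given the previously exposed ones), so the number of monochromatic matching edges stochastically dominates a binomial-type quantity with linear mean unless $|A|$ or $|B|$ is tiny or $(A,B)$ is already forced by $G$'s structure; here one uses the size-$\ge 3$ hypothesis to also control the contribution of monochromatic $G$-edges on the small side. A union bound over the $\exp(O(n))$ partitions then requires the per-partition failure probability to be $e^{-\Omega(n)}$, which holds away from the regime where one side is of size $o(n)$; the small-side regime is handled separately by a direct argument (a set $S$ with $|S| = o(n)$ that is ``almost independent after adding the matching'' forces most of $S$'s matching partners to lie outside $S$, contradicting a local counting bound, again using bounded degree and component size $\ge 3$). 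Collecting the two regimes gives $\pr{1+\lambda_{\min}(G^*) \le \alpha} \lesssim n^{-\alpha}$.

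Finally I would assemble the two tail bounds into the statement, choosing $\alpha = \alpha(\Delta)$ small enough that both the expander-comparison constant and the Cheeger/dual-Cheeger conversion constants are absorbed. I expect the main obstacle to be the $\lambda_{\min}$ bound: getting a clean union bound over all tripartitions with the right $e^{-\Omega(n)}$ rate, and in particular handling the ``small side'' case where standard concentration is too weak — this is where the assumption that every component of $G$ has at least three vertices is used most essentially (it prevents the walk/graph from having a rigid bipartite skeleton that the matching cannot disrupt), and it is the step that, as the authors remark, requires the most care.
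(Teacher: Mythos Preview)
Both halves of your proposal have real gaps.

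For $\lambda_2$: the subgraph you form from selected $G$-edges plus the random matching is \emph{not} a configuration model --- the $G$-edges are deterministic --- so there is no off-the-shelf result saying it is an expander; the results you cite (Bollob\'as, Friedman, etc.) apply only when \emph{all} half-edges are randomly matched. In fact ``a fixed bounded-degree graph of minimum degree $1$ plus a uniform perfect matching is w.h.p.\ an expander'' is essentially the statement you are trying to prove. The paper's route to the configuration model is different: partition $V$ into connected pieces $V_1,\ldots,V_M$ with $3\le |V_i|<9\Delta$ (Lemma~\ref{l:partition}, which is where the size-$\ge 3$ hypothesis enters), apply the Madras--Randall decomposition $\gamma(G^*)\ge \hat\gamma\,\gamma_*$, and observe that the \emph{projected} chain $\hat P$ on $[M]$ is, up to a bounded Dirichlet-form comparison, simple random walk on a genuine configuration model with degree sequence $(|V_i|)_{i\in[M]}$ --- because the matching edges, viewed at the level of the pieces, are exactly a random pairing of $|V_i|$ half-edges attached to each $i$. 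The minimum degree in this configuration model is $\ge 3$, so the standard expander bound applies, and $\gamma_*$ is bounded below trivially since each $|V_i|\le 9\Delta$.

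For $\lambda_{\min}$: the union bound you sketch is off by an exponential factor. For a balanced $(A,B)$ with $|A|\approx n/2$, the probability that the matching places fewer than $\delta n$ edges inside $A$ is only $2^{-n(1/2-o_\delta(1))}$ (this is sharp; see Lemma~\ref{lem:probforamatching}), while there are $\binom{n}{n/2}\approx 2^n$ such partitions, so a naive union bound yields $2^{n/2}$, not $o(1)$. The missing idea is to reuse the partition $(V_i)_{i\le M}$. Each connected piece $V_i$ has, up to swapping sides, at most one bipartition $(U_i(1),U_i(2))$ with no internal $G$-edge. Call a global partition $(A,B)$ of $S$ \emph{bad} if it agrees with this local bipartition on all but $O(\delta n)$ of the pieces; for \emph{good} partitions the $G$-edges alone already give $Q(A,A)+Q(B,B)\gtrsim\delta$ deterministically, with no randomness needed. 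The number of bad partitions is at most $2^{M+o_\delta(1)n}\le 2^{n/3+o_\delta(1)n}$, since $|V_i|\ge 3$ forces $M\le n/3$, and now the union bound $2^{n/3}\cdot 2^{-n/2}$ succeeds. The inequality $n/3<n/2$ here is exactly why size $\ge 3$ is the right hypothesis. Finally, sets $S$ with $|S|\le(1-\delta)n$ are not a separate ``small side'' regime requiring its own argument: they are dispatched directly via $\zeta(S)\ge Q(S,S^c)/\pi(S)\gtrsim \delta\,\Phi_*$, i.e.\ by the $\lambda_2$ bound already established.
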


In the proof of the first inequality above we are going to use the following result from~\cite{madrasrandall}. For a similar result see also~\cite{jerrumetal}.

\begin{theorem}[{\cite[Theorem~1.1]{madrasrandall}}]\label{thm:madrasrandall}
Let $X$ be a reversible Markov chain with transition matrix $P$, invariant distribution $\pi$ and spectral gap $\gamma$. Let $V_1,\ldots,V_M$ be a partition of $V$ and let $P_i$ be the transition matrix on $V_i$ with off-diagonal transitions $P_i(x,y)=P(x,y)$ for all $x\neq y \in V_i$ and $P_i(x,x)=1-\sum_{z \in V_i \setminus \{x \}}P(x,z)$. Denote its spectral gap  by $\gamma(P_i)$ and let $\gamma_{*}:=\min_{i \in [M]}\gamma(P_i)$. Let $\widehat P$ be a Markov chain on $[M]$ with transition probabilities given by 
\begin{align}\label{eq:defphat}
\widehat P(i,j)=\prcond{X_1 \in V_j}{ X_0 \in V_{i}}{\pi}=\sum_{x \in V_i}\frac{\pi(x)}{\pi(V_i)}P(x,V_j).
\end{align}
and spectral gap given by $\widehat \gamma$. Then 
\begin{equation}
\label{e:decom}
\gamma \ge   \widehat\gamma \gamma_{*}. 
\end{equation}
\end{theorem}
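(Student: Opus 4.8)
\textbf{Proof plan for Theorem~\ref{thm:madrasrandall} (the decomposition inequality $\gamma \ge \widehat\gamma\,\gamma_*$).}

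The plan is to prove this via the variational (Dirichlet form / Rayleigh quotient) characterization of the spectral gap. Recall that for a reversible chain with transition matrix $P$ and stationary distribution $\pi$, the spectral gap is
\[
\gamma = \inf_{f:\,\Var_\pi(f)>0} \frac{\mathcal{E}(f,f)}{\Var_\pi(f)}, \qquad \mathcal{E}(f,f)=\tfrac12\sum_{x,y}\pi(x)P(x,y)\bigl(f(x)-f(y)\bigr)^2.
\]
So it suffices to show that for every $f:V\to\R$ we have $\mathcal{E}(f,f) \ge \widehat\gamma\,\gamma_*\,\Var_\pi(f)$. The natural approach is to split the Dirichlet form into a ``within-block'' part and a ``between-block'' part, bound each by the relevant spectral gap, and recombine. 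Concretely, for a function $f$ on $V$ define its block-average $\bar f$ on $[M]$ by $\bar f(i) = \sum_{x\in V_i}\frac{\pi(x)}{\pi(V_i)}f(x)$, and let $\hat\pi(i)=\pi(V_i)$, which one checks is the stationary distribution of $\widehat P$ (reversibility of $\widehat P$ follows from reversibility of $P$). The first step is the variance decomposition (law of total variance): $\Var_\pi(f) = \E_{\hat\pi}\!\bigl[\Var_{\pi(\cdot\mid V_i)}(f)\bigr] + \Var_{\hat\pi}(\bar f)$, where $\Var_{\pi(\cdot\mid V_i)}(f)$ is the variance of $f$ restricted to block $V_i$ under the conditional measure.

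For the second step I would bound each piece. For the first term, apply the definition of $\gamma(P_i)$ on each block: since $P_i$ is the chain on $V_i$ with the same off-diagonal rates and a holding probability making it stochastic, its Dirichlet form restricted to $V_i$ equals $\mathcal{E}_i(f,f)=\tfrac12\sum_{x,y\in V_i}\pi(x)P(x,y)(f(x)-f(y))^2$ (the diagonal terms contribute nothing), and its stationary distribution is $\pi(\cdot\mid V_i)$. Hence $\Var_{\pi(\cdot\mid V_i)}(f)\le \gamma(P_i)^{-1}\mathcal{E}_i(f,f) \le \gamma_*^{-1}\mathcal{E}_i(f,f)$. Summing with weights $\hat\pi(i)=\pi(V_i)$ and noting $\hat\pi(i)\,\mathcal E_i(f,f)$ telescopes into exactly the within-block portion of $\mathcal{E}(f,f)$ (edges with both endpoints in the same block), we get $\E_{\hat\pi}[\Var_{\pi(\cdot\mid V_i)}(f)] \le \gamma_*^{-1}\,\mathcal{E}^{\mathrm{within}}(f,f)$. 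For the second term, apply the spectral gap of $\widehat P$ to the function $\bar f$ on $[M]$: $\Var_{\hat\pi}(\bar f) \le \widehat\gamma^{-1}\,\widehat{\mathcal{E}}(\bar f,\bar f)$ where $\widehat{\mathcal{E}}(\bar f,\bar f)=\tfrac12\sum_{i,j}\hat\pi(i)\widehat P(i,j)(\bar f(i)-\bar f(j))^2$.

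The third step, which I expect to be the main obstacle, is to compare $\widehat{\mathcal{E}}(\bar f,\bar f)$ with the between-block part of $\mathcal{E}(f,f)$, i.e. to show
\[
\widehat{\mathcal{E}}(\bar f,\bar f) \le \mathcal{E}^{\mathrm{between}}(f,f) := \tfrac12\sum_{i\ne j}\ \sum_{x\in V_i,\,y\in V_j}\pi(x)P(x,y)\bigl(f(x)-f(y)\bigr)^2.
\]
Here one uses the identity $\hat\pi(i)\widehat P(i,j) = \sum_{x\in V_i,y\in V_j}\pi(x)P(x,y)$ from \eqref{eq:defphat}, so that $\hat\pi(i)\widehat P(i,j)(\bar f(i)-\bar f(j))^2$ is a squared \emph{average} of increments against a probability measure proportional to $\pi(x)P(x,y)$ on the edges from $V_i$ to $V_j$; by Jensen/convexity of $t\mapsto t^2$, $(\bar f(i)-\bar f(j))^2 = \bigl(\text{average of }(f(x)-f(y))\bigr)^2 \le \text{average of }(f(x)-f(y))^2$. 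Summing over $i\ne j$ yields the claimed bound. (A small care point: one must make sure the averaging weights in the definition of $\bar f$ match, which is exactly why $\bar f$ is the $\pi$-weighted average within each block; this is where I would slow down and check the bookkeeping.) Combining the three steps: $\gamma_*^{-1}\mathcal{E}^{\mathrm{within}} + \widehat\gamma^{-1}\mathcal{E}^{\mathrm{between}} \ge \Var_\pi(f)$, and since $\mathcal{E}^{\mathrm{within}}+\mathcal{E}^{\mathrm{between}} = \mathcal{E}(f,f)$ while $\gamma_*^{-1},\widehat\gamma^{-1} \le (\widehat\gamma\gamma_*)^{-1}$ (both gaps are at most $1$), we conclude $\mathcal{E}(f,f)\ge \widehat\gamma\gamma_*\Var_\pi(f)$, which is \eqref{e:decom}. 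Since this is cited verbatim from \cite{madrasrandall}, I would keep the exposition brief and refer there for full details, but the argument above is self-contained.
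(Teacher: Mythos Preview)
The paper does not prove this statement; it is quoted verbatim from \cite{madrasrandall} and used as a black box in the proof of Theorem~\ref{t:expander}. So there is no ``paper's own proof'' to compare against, and I comment instead on the correctness of your sketch.

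Your Step~3 contains a genuine gap. You assert that $\bar f(i)-\bar f(j)$ is the $\pi(x)P(x,y)$-weighted average of $f(x)-f(y)$ over edges from $V_i$ to $V_j$, and then apply Jensen. But the marginal on $V_i$ of the normalised edge measure $\pi(x)P(x,y)/\bigl(\hat\pi(i)\widehat P(i,j)\bigr)$ is proportional to $\pi(x)P(x,V_j)$, not to $\pi(x)$; it reproduces $\bar f(i)=\sum_{x\in V_i}\frac{\pi(x)}{\pi(V_i)}f(x)$ only when $P(x,V_j)$ is constant over $x\in V_i$. Concretely, take $V=\{1,2,3,4\}$, $V_1=\{1,2\}$, $V_2=\{3,4\}$, $\pi$ uniform, $P(1,3)=P(3,1)=1-\epsilon$, $P(1,2)=P(2,1)=P(3,4)=P(4,3)=\epsilon$, $P(2,2)=P(4,4)=1-\epsilon$, and $f=(0,2,0,0)$. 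Then $\bar f=(1,0)$ so $\widehat{\mathcal E}(\bar f,\bar f)=(1-\epsilon)/4$, yet the only between-block edge is $\{1,3\}$ with $f(1)=f(3)$, so $\mathcal E^{\mathrm{between}}(f,f)=0$. Your ``small care point'' is exactly where the argument breaks.

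This is not merely cosmetic: the same example shows that the naive decomposition $\Var_\pi(f)\le \gamma_*^{-1}\mathcal E^{\mathrm{within}}+\widehat\gamma^{-1}\mathcal E^{\mathrm{between}}$ fails (here $\Var_\pi(f)=3/4$ while $\mathcal E^{\mathrm{within}}=\epsilon$ and $\mathcal E^{\mathrm{between}}=0$, and $\gamma_*=2\epsilon$). The proofs in \cite{madrasrandall} and \cite{jerrumetal} handle this by a more careful treatment of the cross term arising when one writes $f=g+h$ with $g$ constant on blocks: the within-block gap is used not only to bound $\Var_\pi(h)$ but also to control the interaction between $g$ and $h$ in the Dirichlet form. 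If you want a self-contained argument, that is the missing ingredient; otherwise simply cite the result as the paper does.
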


We now recall an extremal characterization of $\lambda_{\min}(P)$ which will be used in the proof of the inequality for $\lambda_{\rm{min}}$ in the proof of Theorem~\ref{t:expander}.

\begin{theorem}[{\cite[Theorems~3.1 and 3.2]{smallesteigenvalue}}]
Let $P$ be a reversible transition matrix on a finite state space $V$ with invariant distribution $\pi$ and $Q(D,F)= \sum_{d\in D, f\in F}\pi(d)P(d,f)$ for $D, F\subseteq V$. For a set $S\subseteq V$ let
\begin{align*}
\zeta(S)=\min_{A,B: \, A \cup B=S,\, A \cap B=\emptyset}\zeta(S,A,B), \text{ where } \\ \zeta(S,A,B)=\frac{Q(A,A)+Q(B,B)+Q(S,S^c)}{\pi(S)}
\end{align*}
and define $\zeta_{*}=\min_{\emptyset \neq S \subseteq V}\zeta(S)$. Then we have 
\begin{equation}
\label{e:smallestev}
1-\sqrt{1-\zeta_{*}^{2}} \le 1+\lambda_{\min}(P) \le 4 \zeta_{*}.
\end{equation}
\end{theorem}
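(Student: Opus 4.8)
The plan is to prove this by the standard ``dual Cheeger'' (bipartiteness‑ratio) argument, handling the two inequalities separately; both rest on the variational identities, valid for reversible $P$ by expanding the squares and using $\pi(x)P(x,y)=\pi(y)P(y,x)$,
\[
\langle (I+P)f,f\rangle_\pi \;=\; \tfrac12\sum_{x,y}\pi(x)P(x,y)\big(f(x)+f(y)\big)^2,
\]
\[
\langle (I-P)f,f\rangle_\pi \;=\; \tfrac12\sum_{x,y}\pi(x)P(x,y)\big(f(x)-f(y)\big)^2.
\]
In particular $1+\lambda_{\min}(P)$ equals the minimum over $f\neq 0$ of $\tfrac12\sum_{x,y}\pi(x)P(x,y)(f(x)+f(y))^2\big/\|f\|_\pi^2$, attained at a $\lambda_{\min}$‑eigenfunction $\varphi$; for that $\varphi$ one has $\tfrac12\sum\pi(x)P(x,y)(\varphi(x)\pm\varphi(y))^2=(1\pm\lambda_{\min})\|\varphi\|_\pi^2$, which is the only place the eigenvalue enters.

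\textbf{Upper bound $1+\lambda_{\min}\le 4\zeta_*$.} Given $S=A\sqcup B$ nearly optimal in the definition of $\zeta_*$, test the Rayleigh quotient on $f=\mathbf 1_A-\mathbf 1_B$. An ordered pair $(x,y)$ contributes $(f(x)+f(y))^2=4$ if $x,y$ lie both in $A$ or both in $B$, contributes $0$ if one lies in $A$ and the other in $B$ or if both lie in $S^c$, and contributes $1$ if exactly one lies in $S$. Hence the numerator of the Rayleigh quotient equals $2Q(A,A)+2Q(B,B)+Q(S,S^c)\le 4\big(Q(A,A)+Q(B,B)+Q(S,S^c)\big)$ while $\|f\|_\pi^2=\pi(A)+\pi(B)=\pi(S)$, giving $1+\lambda_{\min}\le 4\zeta(S,A,B)$; optimizing over $(S,A,B)$ finishes this direction.

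\textbf{Lower bound (the main point).} If $\lambda_{\min}\ge 0$ the left inequality is trivial, so assume $\lambda_{\min}<0$; then the claim is equivalent to $\zeta_*\le\sqrt{1-\lambda_{\min}^2}$. Take the eigenfunction $\varphi$ and perform a two‑sided sweep: for $t\ge 0$ let $S_t=\{x:|\varphi(x)|\ge t\}$, $A_t=\{x:\varphi(x)\ge t\}$, $B_t=\{x:\varphi(x)\le -t\}$, so $S_t=A_t\sqcup B_t$. By the layer‑cake identity $\int_0^\infty \mathbf 1(|\varphi(x)|\ge t)\,d(t^2)=\varphi(x)^2$ we get $\int_0^\infty \pi(S_t)\,d(t^2)=\|\varphi\|_\pi^2$, and writing $N(t)=Q(A_t,A_t)+Q(B_t,B_t)+Q(S_t,S_t^c)$ a direct computation shows that an ordered pair $(x,y)$ contributes to $\int_0^\infty N(t)\,d(t^2)$ exactly $|\varphi(x)^2-\varphi(y)^2|=|\varphi(x)-\varphi(y)|\,|\varphi(x)+\varphi(y)|$ when $\varphi(x)\varphi(y)<0$ (such an edge is never internal to $A_t$ or $B_t$, only part of an $S_t$‑to‑$S_t^c$ boundary), and at most $(\varphi(x)+\varphi(y))^2$ when $\varphi(x)\varphi(y)\ge 0$. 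Summing, and applying Cauchy–Schwarz to the sign‑changing terms against the two quadratic forms above, yields
\[
\int_0^\infty N(t)\,d(t^2)\ \lesssim\ (1+\lambda_{\min})\|\varphi\|_\pi^2+\sqrt{(1-\lambda_{\min})(1+\lambda_{\min})}\,\|\varphi\|_\pi^2\ \asymp\ \sqrt{1-\lambda_{\min}^2}\,\|\varphi\|_\pi^2,
\]
using $1+\lambda_{\min}\le\sqrt{1-\lambda_{\min}^2}$ for $\lambda_{\min}\le 0$. The averaging principle $\min_{t}N(t)/\pi(S_t)\le \int_0^\infty N(t)\,d(t^2)\big/\int_0^\infty\pi(S_t)\,d(t^2)$ then produces a threshold at which $(S_t,A_t,B_t)$ has $\zeta(S_t,A_t,B_t)\lesssim\sqrt{1-\lambda_{\min}^2}$, hence $\zeta_*\lesssim\sqrt{1-\lambda_{\min}^2}$.

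\textbf{Main obstacle.} The scheme above gives both bounds up to universal constants; the delicate point is extracting the \emph{sharp} constant so that the inequality reads literally $1-\sqrt{1-\zeta_*^2}\le 1+\lambda_{\min}$. This forces a more careful accounting of the sweep: one must bound the $A_t$‑ and $B_t$‑internal contributions jointly with the sign‑changing $S_t$‑to‑$S_t^c$ boundary, so that a single Cauchy–Schwarz against $\sum\pi(x)P(x,y)(\varphi(x)-\varphi(y))^2$ and $\sum\pi(x)P(x,y)(\varphi(x)+\varphi(y))^2$ controls \emph{all} of $N(t)$ at once, together with a lossless, rather than crude, passage from the integrated bound to the existence of a good level. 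That bookkeeping is where essentially all of the work lies; the rest is the elementary expansion and layer‑cake computations indicated above.
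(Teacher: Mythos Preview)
The paper does not prove this theorem: it is quoted verbatim from an external reference (labelled \cite[Theorems~3.1 and 3.2]{smallesteigenvalue} in the text) and used as a black box in the proof of Theorem~\ref{t:expander}. There is therefore no ``paper's own proof'' to compare against.

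That said, your sketch is the standard dual Cheeger (bipartiteness ratio) argument and is essentially how the cited result is proved. The upper bound via the test function $f=\mathbf 1_A-\mathbf 1_B$ is correct (and in fact yields $1+\lambda_{\min}\le 2\zeta_*$, stronger than the stated $4\zeta_*$). For the lower bound your two-sided sweep with the level sets $A_t=\{\varphi\ge t\}$, $B_t=\{\varphi\le -t\}$ and the co-area/layer-cake identity is the right idea; your case analysis of the per-edge contribution to $\int_0^\infty N(t)\,d(t^2)$ is accurate (same-sign pairs contribute $\max(\varphi(x)^2,\varphi(y)^2)$, opposite-sign pairs contribute $|\varphi(x)^2-\varphi(y)^2|$), and Cauchy--Schwarz against the two quadratic forms $(1\pm\lambda_{\min})\|\varphi\|_\pi^2$ is exactly what closes the argument. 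You are also right that the only genuine work left is the bookkeeping needed to obtain the sharp constant rather than an unspecified universal constant; as written, your display with ``$\lesssim$'' and ``$\asymp$'' gives only $\zeta_*\le C\sqrt{1-\lambda_{\min}^2}$ for some $C$, not the claimed $\zeta_*\le\sqrt{1-\lambda_{\min}^2}$. To get the constant $1$ one has to combine the same-sign and opposite-sign contributions into a single Cauchy--Schwarz (rather than bounding them separately and adding), which is precisely the refinement you flag in your final paragraph.
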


We also recall Cheeger's inequality (see for instance~\cite[Ch.\ 13]{LevPerWil})
\begin{equation}
\label{e:cheeger}
1-\sqrt{1-\Phi_{*}^{2}} \le 1-\lambda_{2}(P) \le 2 \Phi_{*}, \quad \text{where} \quad  \Phi_{*}=\min_{S:\, 0<\pi(S) \le 1/2}\Phi(S),
\end{equation}
and $\Phi(S)=\frac{Q(S,S^c)}{\pi(S)}$.

The next two lemmas will be used in the proof of Theorem~\ref{t:expander}. We defer their proofs to the end of this section.

\begin{lemma}
\label{l:partition}
Assume that the minimal size of a connected component of $G=(V,E)$ is at least $L$. Then there exists a partition  $V_1,\ldots,V_{M}$  of $V$ such that  for all $i \in [M]$  the induced graph on $V_i$ is connected and  $L \le |V_i| < L^2\Delta$, where $\Delta$ is the maximal degree in $G$.
\end{lemma}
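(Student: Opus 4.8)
The plan is to reduce to the connected case: a partition of each connected component of $G$ separately into pieces with the required properties yields such a partition of $V$, so it suffices to prove the statement when $G$ is connected with $|V| \ge L$. First I would fix a spanning tree $T$ of $G$ and root it at an arbitrary vertex $r$. Since $T \subseteq G$, every vertex has at most $\Delta$ children in $T$, and any vertex set that is connected in $T$ induces a connected subgraph of $G$; this is what I will use to ensure connectivity of the parts.

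The core of the argument is a greedy post-order sweep of $T$. Processing the vertices of $T$ from the leaves towards $r$, I maintain the set of vertices not yet assigned to a part. When a vertex $v$ is reached, all of its descendants have already been processed, and the still-unassigned descendants of $v$ together with $v$ form a set $A(v)$ which is exactly $v$ together with the (disjoint) ``pending clusters'' hanging off those children of $v$ that were themselves left unassigned; in particular $A(v)$ is connected in $T$, and each such pending cluster has size at most $L-1$ (otherwise it would have been cut off when that child was processed, since a child is never the root). The rule is: if $v \ne r$ and $|A(v)| \ge L$, declare $A(v)$ a new part and mark its vertices assigned. Whenever this happens we have $L \le |A(v)| \le 1 + \Delta(L-1)$, because $A(v)$ is $v$ plus at most $\Delta$ pending clusters each of size at most $L-1$. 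Thus every part created during the sweep has size in $[L, 1 + \Delta(L-1)] \subseteq [L, L^2\Delta)$ for $L \ge 2$.

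The only subtle point is the cluster $A(r)$ remaining when the root is processed. If $|A(r)| \ge L$ I declare $A(r)$ a final part and we are done. If instead $|A(r)| < L$, then $r$ must have at least one child that was assigned to some already-formed part $W$ (otherwise no cut was ever made and $A(r) = V$, contradicting $|V| \ge L$); that child is a neighbour of $r$, so $W \cup A(r)$ is connected in $T$, and I merge $A(r)$ into $W$. The merged part still has size at least $L$ and at most $(1 + \Delta(L-1)) + (L-1) = 1 + (\Delta+1)(L-1)$, which is $< L^2\Delta$ for $L \ge 2$ (in the application $L = 3$). At the end every vertex has been assigned exactly once, every part is connected in $T$ and hence induces a connected subgraph of $G$, and every part has size in $[L, L^2\Delta)$, which completes the proof.

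I expect the main obstacle — really the only non-routine point — to be this handling of the residual cluster at the root: one has to argue that a previously formed part adjacent to it exists (which is precisely where the hypothesis $|V| \ge L$ enters) and that absorbing the residue does not push the size past $L^2\Delta$. The generous bound $L^2\Delta$ in the statement is exactly what provides the slack needed for the merge, whereas the sweep on its own only produces the tighter bound $1 + \Delta(L-1)$.
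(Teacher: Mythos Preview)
Your argument is essentially correct and takes a genuinely different route from the paper, but there is one small slip in the endgame. When $|A(r)|<L$ you claim that some \emph{child} of $r$ must already lie in a previously formed part, reasoning that otherwise no cut was ever made. This inference is false: on the path $r\text{--}a\text{--}b\text{--}c\text{--}d$ with $L=3$ the sweep cuts $\{b,c,d\}$ at $b$ and leaves $A(r)=\{r,a\}$, yet the unique child $a$ of $r$ is unassigned. The repair is immediate: since some cut was made we have $V\setminus A(r)\neq\emptyset$, and as $T$ is connected there is a tree edge from $A(r)$ into some already formed part $W$; then $W\cup A(r)$ is connected in $T$ and your size bound $|W\cup A(r)|\le(1+\Delta(L-1))+(L-1)=1+(\Delta+1)(L-1)<L^2\Delta$ goes through unchanged.

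With that fixed, your spanning-tree post-order sweep is a valid alternative to the paper's proof. The paper works directly on $G$ with no auxiliary tree: it repeatedly picks an arbitrary connected set $A$ of size exactly $L$ inside the unprocessed region $B$, and takes as the next part $A$ together with all connected components of $B\setminus A$ of size $<L$. Each such small component must be adjacent to $A$ (its component in $B$ had size $\ge L$ by the inductive invariant), so the new part has size at most $L+L\Delta(L-1)<L^2\Delta$, and the invariant that every component of the remainder has size $\ge L$ is preserved. Your approach yields the sharper piece bound $1+\Delta(L-1)$ for all but possibly one merged piece, at the cost of fixing a spanning tree and handling the root separately; the paper's greedy argument avoids both the tree and the final merge, paying with the looser bound. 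Both are short and entirely elementary.
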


\begin{lemma}\label{lem:probforamatching}
Let $\Omega$ be a set on $n$ vertices and let  $A\subseteq \Omega$ be a set satisfying $|A|=\alpha n$ with $\alpha\in (1/2 - \delta, 1/2+\delta)$ and $\delta<1/4$. Pick a perfect matching on $\Omega$ uniformly at random. We then have 
\begin{align*}
\pr{\exists \text{ less than $\delta n$ edges of perfect matching joining pairs of vertices of $A$}} \\ \leq 2^{-n\cdot \left( \frac{1}{2} -C(\delta)\right)},
\end{align*}
where $C(\delta)$ is a constant depending on $\delta$ satisfying $\lim_{\delta\to 0}C(\delta)=0$.
\end{lemma}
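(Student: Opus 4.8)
\textbf{Plan of proof for Lemma~\ref{lem:probforamatching}.}

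The plan is to compute directly the probability that a uniformly random perfect matching on $\Omega$ produces at most $\delta n$ ``monochromatic'' edges inside $A$, and then bound the resulting binomial-type sum using Stirling's formula. First I would set up the count: a perfect matching on $n$ vertices is one of $(n-1)!! = \frac{n!}{2^{n/2}(n/2)!}$ equally likely objects. Fix an integer $i$ with $0 \le i < \delta n$ (the number of edges contained in $A$); I would count matchings with exactly $i$ edges inside $A$. Such a matching is obtained by choosing $2i$ of the $|A|=\alpha n$ vertices of $A$ and pairing them among themselves (there are $\binom{\alpha n}{2i}\,(2i-1)!!$ ways), then the remaining $\alpha n - 2i$ vertices of $A$ must each be matched to a distinct vertex of $A^c$ (which forces $\alpha n - 2i \le (1-\alpha)n$, and there are $\frac{((1-\alpha)n)!}{((1-\alpha)n - (\alpha n - 2i))!}$ such injections), and finally the remaining $(1-\alpha)n - (\alpha n - 2i) = (1-2\alpha)n + 2i$ vertices of $A^c$ are matched among themselves arbitrarily ($((1-2\alpha)n + 2i - 1)!!$ ways, noting this count is even since $n$ is even). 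So the probability of exactly $i$ interior edges is a ratio of these products to $(n-1)!!$, and the event in question has probability $\sum_{i < \delta n}$ of this.

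The second and main step is the asymptotic estimate. I would apply Stirling's formula ($m! = m^{m+1/2}e^{-m}e^{\Theta(1/m)}\sqrt{2\pi}$, and the double-factorial analogues) to each factorial/double-factorial in the ratio, writing $i = \beta n$ with $\beta \in [0,\delta)$, and collect the exponential rate. The exponential-order term should come out as $\exp\big(n\,\Psi(\alpha,\beta)\big)$ for an explicit entropy-like function $\Psi$ built from terms like $\alpha\log\alpha$, $\beta\log\beta$, $(1-\alpha)\log(1-\alpha)$, $(1-2\alpha+2\beta)\log(1-2\alpha+2\beta)$, etc.; the commented-out display block in the excerpt (which performs exactly this kind of Stirling manipulation) shows the intended shape of the computation and the appearance of a factor $2^{\alpha n - n/2}$ and $2^{\delta n}$. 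I would then check that at $\alpha = 1/2$, $\beta = 0$ the rate $\Psi$ equals $0$ after the $2^{-n/2}$ normalization is extracted, i.e.\ the whole expression is $\le \mathrm{poly}(n)\cdot 2^{-n/2}$, and that perturbing $\alpha$ to $1/2 \pm \delta$ and letting $\beta$ range over $[0,\delta)$ changes the rate by at most a quantity $C(\delta)\log 2$ with $C(\delta)\to 0$ as $\delta \to 0$ — this is where the hypothesis $\delta < 1/4$ keeps all arguments of the logarithms bounded away from $0$ and hence $\Psi$ continuous/Lipschitz near $(1/2,0)$. Summing over the at most $\delta n$ values of $i$ only contributes a polynomial (indeed linear) factor, which is absorbed into the constant $C(\delta)$ (or a harmless $o(n)$ in the exponent). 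This yields the bound $2^{-n(1/2 - C(\delta))}$.

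I expect the main obstacle to be purely bookkeeping: keeping track of exactly which vertex sets get matched internally versus across $A/A^c$, ensuring the parity constraints ($n$ even, $\alpha n$ an integer, the leftover $A^c$-block having even size) are handled, and then carefully extracting the exponential rate from a product of five or six Stirling approximations without sign errors — the commented block in the source suggests the authors already did precisely this. A subtlety worth flagging is the regime where $\alpha n - 2i$ is close to $(1-\alpha)n$, i.e.\ almost all of $A^c$ is used up matching with $A$; there the leftover block $(1-2\alpha+2\beta)n$ is small and one must either treat the $0\log 0 = 0$ convention carefully or observe that in the relevant range $\alpha < 1/2 + \delta$ and $\beta < \delta$ with $\delta<1/4$ this quantity stays bounded below by a positive constant times $n$, so no degeneracy occurs. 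Once the rate function is in hand, the monotonicity/continuity argument giving $C(\delta)\to 0$ is elementary. I would also remark that a cruder bound suffices for the application (only that this probability is, say, $o(1/n)$ or summably small), so if the sharp exponent is painful one can afford to be lossy; but since the statement asks for $2^{-n(1/2-C(\delta))}$ I would carry through the Stirling estimate as above.
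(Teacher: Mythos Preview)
Your proposal is correct and follows essentially the same route as the paper: exact enumeration of matchings with $i$ internal edges, Stirling's formula term-by-term, and then continuity of the resulting rate function at $(\alpha,\beta)=(1/2,0)$ to extract $2^{-n(1/2-C(\delta))}$; the only cosmetic difference is that the paper keeps the binomial $\binom{m}{2i}$ separate and bounds $\sum_{i\le \delta n}\binom{m}{2i}\le 2^{mH(2\delta)}$ at the end, whereas you fold it into the rate $\Psi$ and absorb the sum as a linear factor. One small correction: your claim that $(1-2\alpha+2\beta)n$ stays bounded below by a positive constant times $n$ is false (when $\alpha>1/2$ and $i$ is at its minimum $m-n/2$ this quantity vanishes), so you do need the $0\log 0=0$ convention you mention rather than the alternative.
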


\begin{proof}[\bf Proof of Theorem~\ref{t:expander}]

 Taking $L=3$ we can apply Lemma~\ref{l:partition} to get a partition $V_1,\ldots, V_M$ of connected components of $V$ (with respect to the graph structure of $G$) such that $3\leq |V_i|\leq 9\Delta$ for all $i \in [M]$. We start by proving that there exists $\alpha>0$ such that 
 \begin{align}\label{e:la2}
 \pr{1-\lambda_2(G^{*} )\leq \alpha} \lesssim n^{- \alpha}.
 \end{align}
Let $X$ be a simple random walk on $G^*$ and let $\widehat{P}$ be the transition matrix as in~\eqref{eq:defphat}. Let also $P_i$ be the transition matrix on $V_i$ and $\widehat{\gamma}$ and $\gamma_*$ be as in the statement of Theorem~\ref{thm:madrasrandall}.

Consider the multigraph $H=([M], \til{E})$, in which the number of edges joining vertices $i$ and $j$ is equal to the number of edges of the perfect matching between $V_i$ and $V_j$ (and the number of loops of vertex $i$ is equal to the number of pairs of vertices of $V_i$ that are matched to each other). Then this multigraph is distributed as the configuration model on $[M]$ where vertex $i$ has degree $|V_i|$. Let~$K$ be the transition matrix of simple random walk on $H$. We are going to compare $\widehat{P}$ to $K$ as well as their invariant distributions and then using standard comparison techniques we will be able to compare their spectral gaps. Let $E^*(V_i,V_j)$ be the number of edges of $G^*$ that join vertices of $V_i$ and $V_j$ and let $E(V_i, V_j)$ be the number of edges of $G$ joining vertices of $V_i$ to vertices of $V_j$. Using the definition of $\widehat{P}$ and $K$ we get for $i\neq j$
\begin{align*}
\widehat{P}(i,j) = \frac{E^*(V_i,V_j)}{\sum_{v\in V_i} (\deg(v) +1)} \quad \text{and} \quad K(i,j) = \frac{E^*(V_i,V_j) - E(V_i,V_j)}{|V_i|},
\end{align*}
and hence writing $\pi_{\widehat{P}}$ and $\pi_K$ for the corresponding invariant distributions we get for all $i\in [M]$
\begin{align*}
\pi_{\widehat{P}}(i) = \frac{\sum_{v\in V_i} (\deg(v) +1)}{2|E| + n} \quad \text{and} \quad \pi_K(i) = \frac{|V_i|}{n}.
\end{align*}
Therefore, we obtain for all $i,j\in [M]$
\[
\widehat{P}(i,j) \geq \frac{1}{\Delta+1} K(i,j) \quad \text{and} \quad \pi_{\widehat{P}}(i) \geq \frac{1}{\Delta+1}\pi_K(i),
\]
and hence using the extremal characterisation of the spectral gap in terms of the Dirichlet form  (see for instance~\cite[Ch.\ 13]{LevPerWil}) we obtain
\begin{align}\label{eq:widehatgamma}
\widehat{\gamma} \ge \frac{1-\lambda_2(K)}{(\Delta+1)^2}.
\end{align}
For the random walk on the configuration model it is known (see for instance~\cite[p.\ 149-150]{anatomy}) that for some $\alpha>0$
\[
\pr{1-\lambda_2(K) <\alpha} \lesssim n^{-\alpha}.
\]
Using this, the inequality $\gamma_*\gtrsim  (\max_{i}|V_i|)^{-3}$ (see for instance~\cite[Ch.\ 6]{AF}), Theorem~\ref{thm:madrasrandall}  and~\eqref{eq:widehatgamma} finishes the proof of~\eqref{e:la2}.

We now prove that there exists $\alpha>0$ so that 
\begin{equation}
\label{e:lmin}
\pr{1+\lambda_{\min}(G^{*} )\leq \alpha} \lesssim n^{- \alpha}. 
\end{equation}
We first argue that it suffices to consider only sets $S$ of size at least $(1-\delta)n$, for some constant~$\delta>0$, by showing that otherwise $\zeta(S)$ is bounded away from 0.

By \eqref{e:la2} and \eqref{e:cheeger} there exists $\beta=\beta(\Delta)>0$ such that 
\begin{align}\label{eq:phistar}
\pr{\Phi_{*} \le \beta} \leq n^{-\beta}.
\end{align}
Let $\delta>0$ to be determined later. 
On the event $\{\Phi_{*} > \beta\}$,  using $Q(S,S^c)=Q(S^c,S)$ and \eqref{e:smallestev}  we see that every $S \neq \emptyset$ with $|S| \leq (1-\delta)n$ satisfies $\zeta(S) \ge \Phi(S) \gtrsim  \delta \beta$.  Defining 
\[
\mathcal{S} = \{ S\subseteq V: |S|\geq (1-\delta)n\} \quad \text{ and } \quad  \xi = \min_{S\in \mathcal{S}} \zeta(S)
\]  
we see that it suffices to show that for some constant $c=c(\Delta)>0$ we have 
\[
\pr{\xi \le c \delta} \lesssim n^{-\delta}.
\]
Let $S \in \mathcal{S}$ and $A,B$ be a partition of $S$. If there are at least $\delta n$ edges (either of the base graph $G$, or of the random perfect matching) connecting pairs of vertices of $A$ or pairs of vertices of $B$, then for some $c(\delta,\Delta)>0$ we  have that
\begin{equation}
\label{e:cdeltaDelta}
\zeta(S,A,B)  \geq Q(A,A)+Q(B,B)\ge c(\delta,\Delta).
\end{equation}
In particular, \eqref{e:cdeltaDelta} holds if $|A|\geq n(1+\delta)/2$ or $|B|\geq n(1+\delta)/2$, since then by simple counting, there must exist at least $\delta n/2$ edges between pairs of vertices of $A$ or $B$.

So from now on we restrict to partitions $(A,B)$ of $S$ which satisfy $|A|, |B| \in ((1/2-\delta)n ,(1/2+\delta)n)$.  Recall the definition of the partition $V_1,\ldots, V_M$ of $V$. For each $i$ let $U_i(1)$ and $U_i(2)$ be the partition of $V_i$ such that  
\[
Q(U_i(1), U_i(1)) + Q(U_i(2), U_i(2)) = \min_{(U,W) \text{ partition of } V_i} (Q(U,U) + Q(W,W)).
\]
Since there can be at most one partition (up to relabeling of the two sets) for which the sum above is equal to $0$, which happens in the case of an induced bipartite graph, it follows that for every other partition~$U,W$ of $V_i$ we have 
\begin{align}\label{eq:otherpartition}
Q(U,U)+ Q(W,W) \geq \frac{1}{2|E|+n}.
\end{align}

We call a partition $(A,B)$ of $S$ (satisfying $|A|, |B| \in ((1/2-\delta)n ,(1/2+\delta)n)$) \emph{good} if
the number of $i\in [M]$ for which
\begin{align}\label{eq:conditionongood}
(U_i(1) \subseteq A\cup S^c \  \text{ and } \  U_i(2) \subseteq B)  \quad \text{ or } \quad (U_i(2) \subseteq A\cup S^c \  \text{ and } \  U_i(1) \subseteq B)
\end{align}
is less than $M-8\Delta \delta n$.

Otherwise, $(A,B)$ is called \emph{bad}. Writing $A_i=A\cap V_i$ and $B_i=B\cap V_i$ and using that 
\begin{align*}
Q(A_i,A_i) =Q((A\cup S^c)\cap V_i, (A\cup S^c)\cap V_i)  
&- Q(S^c\cap V_i, S^c\cap V_i) \\&- 2Q(A_i, S^c\cap V_i),
\end{align*}
we get that if $(A,B)$ is a good partition, then 
\begin{align*}
\zeta(S,A,B)\geq Q(A,A)+Q(B,B) &\geq \sum_{i=1}^{M} (Q(A_i,A_i) + Q(B_i,B_i))\\ &\geq  \frac{8\Delta \delta n}{2|E|+n} - 3\pi(S^c) \gtrsim \delta.
\end{align*}
Note that for the third inequality we used  that for the indices $i$ for which~\eqref{eq:conditionongood} does not hold, the pair $((A\cup S^c)\cap V_i, B_i)$ is a partition of $V_i$ different to $(U_i(1), U_i(2))$, and hence for these indices we can apply~\eqref{eq:otherpartition}.  For each partition $(A,B)$ we define the event 
\begin{align*}
L(A,B) = \{ \text{number of edges of matching between pairs of}\\  \text{vertices of $A$ or $B$ is less than } \delta n\}.  
\end{align*}
We now deduce the following bound
\begin{align*}
\pr{\xi\leq c\delta} &\leq \pr{\exists S\in \mathcal{S}, \exists \text{ bad partition } (A,B) \text{ of } S: L(A,B)}
\\ &\leq |\mathcal{S}|\cdot  \max_{S\in \mathcal{S}}|\{(A,B) \text{ bad partition of } S\}|\cdot  \pr{L(A,B)}.
\end{align*}
We now claim that the number of bad partitions of $S$ is upper bounded by $2^{n/3 + C'(\delta) n}$ for some constant $C'(\delta)>0$ with $C'(\delta)\to 0$ as $\delta\to 0$. Indeed, the sets $A$ and $B$ of the partition are completely determined by the sets $((A_i, B_i))_{i\leq M}$. Now for each $i$ such that~\eqref{eq:conditionongood} holds, the set~$A_i$ must belong to the set $\{U_i(1)\cap S, U_i(2)\cap S\}$. Since $|V_i\cap S|\leq |V_i|\leq 9\Delta$, for the indices~$i$ such that~\eqref{eq:conditionongood} holds we can pick $A_i$ in at most  $2^{9\Delta}$ different ways. Therefore we obtain for all $S\in \mathcal{S}$
\begin{align*}
 |\{(A,B) \text{ bad partition of } S\}| \leq \sum_{k\leq 8\Delta \delta n} {M\choose k} 2^{M-k} 2^{9\Delta k} \leq 2^{n/3} \cdot 2^{C'(\delta) n},
\end{align*}
where $C'(\delta)$ is a constant as claimed above and where we also used that $M\leq n/3$, since $|V_i|\geq 3$ for all~$i$. So we can now conclude using also Lemma~\ref{l:partition}
\begin{align*}
\pr{\xi\leq c\delta} \leq {n\choose \delta n} \cdot  2^{n/3} \cdot 2^{C'(\delta) n} \cdot 2^{-n \cdot (\tfrac{1}{2} - C(\delta))} \leq n^{-\alpha}
\end{align*}
for some $\alpha=\alpha(\Delta)>0$, where the last inequality follows from taking $\delta$ sufficiently small. This now concludes the proof.
\end{proof}

\begin{proof}[\bf Proof of Lemma~\ref{l:partition}]
We define the sets of the partition inductively, using a greedy procedure. 
After defining $V_1,\ldots,V_j$ such that
\begin{itemize}
\item for all $i \in [j]$ the induced graph on $V_i$ is connected and $L \le |V_i| \le L^2\Delta$ and 
\item all connected components of  the induced graph on $B:=V \setminus \cup_{i\in [j]}V_i$ are of size at least $L$,
\end{itemize}
 we proceed to define $V_{j+1}$  
such that the same hold w.r.t.\  $V_1,\ldots,V_{j+1}$. We  pick an arbitrary connected set $A \subset B $ of size $L$. If all connected components of the induced graph on $B \setminus A$ are of size at least $L$ then we set $V_{j+1}=A$. Otherwise, we set $V_{j+1}$ to be the union of $A$ with all the connected components of the induced graph on $B \setminus A$ of size less than $L$. By the induction hypothesis, each such connected component must be adjacent to $A$, and so indeed $|V_j| < L^2\Delta$ as desired. This concludes the induction step. Note that the above description of $V_{j+1}$ can also be used to define $V_1$. 
\end{proof}

\begin{proof}[\bf Proof of Lemma~\ref{lem:probforamatching}]

Write $m=\alpha n$. For the probability in question we then have 
\begin{align*}
&\pr{\exists \text{ less than $\delta n$ edges joining pairs of vertices of $A$}} \\ &= \sum_{m-\tfrac{n}{2}\leq i\leq \delta n}  {m \choose 2i} \cdot \frac{(n-m)!}{(n-2m+2i)!} \cdot \frac{(2i)!}{2^i i!}\cdot \frac{(n-2m+2i)!}{2^{\tfrac{n}{2} - m+i} (\tfrac{n}{2}- m+i)!} \cdot \frac{2^{\tfrac{n}{2}} (\tfrac{n}{2})!}{n!},
\end{align*}
where we use the convention that $0!=1$.
Using that for all $n$ we have 
\[\textstyle
n! = \sqrt{2\pi} n^{n+\tfrac{1}{2}} e^{-n} \exp\left(\sum_{k=n}^{\infty} a_k\right), \text{ where } a_k =\tfrac{1}{2} \int_0^1 \frac{x(1-x)}{(x+k)^2}\dx \leq \frac{1}{12k^2},
\]
we obtain
\begin{align*}
&\frac{(n-m)!}{(n-2m+2i)!} \cdot \frac{(2i)!}{2^i i!}\cdot \frac{(n-2m+2i)!}{2^{\tfrac{n}{2} - m+i} (\tfrac{n}{2}- m+i)!} \cdot \frac{2^{\tfrac{n}{2}} (\tfrac{n}{2})!}{n!} \\
&\leq \sqrt{n}\cdot 2^{\delta n} \cdot \exp\left( n \left( (1-\alpha)\log(1-\alpha) - \left(\tfrac{1}{2} - \alpha +\tfrac{i}{n} \right)\log  \left(\tfrac{1}{2} - \alpha +\tfrac{i}{n} \right)      \right)    \right).
\end{align*}
Now as $\delta\to 0$ (which implies $\alpha\to 1/2$) we have that 
\[
\left( (1-\alpha)\log(1-\alpha) - \left(\tfrac{1}{2} - \alpha +\tfrac{i}{n} \right)\log  \left(\tfrac{1}{2} - \alpha +\tfrac{i}{n} \right)      \right)    =- \frac{1}{2}\log2+ o(1).
\]
Using this together with the entropy bound
\[
\sum_{i\leq \delta n} {m\choose 2i} \leq 2^{mH(2\delta)},
\]
since $\delta<1/4$, 
with $H(p)$ being the entropy of a Bernoulli random variable with parameter $p$ and using the continuity of $H$ in $p$, gives
\begin{align*}
\pr{\exists \text{ less than $\delta n$ edges joining pairs of vertices of $A$}} \leq  2^{-n\left(\tfrac{1}{2} - C(\delta)\right)}, 
\end{align*}
where $C(\delta)$ is a constant only depending on $\delta$ satisfying $C(\delta)\to 0$ as $\delta\to 0$.
\end{proof}

\section*{Acknowledgements} The authors would like to thank Persi Diaconis, Bal\'azs Gerencs\'er, David Levin and Evita Nestoridi for useful discussions. Jonathan Hermon's research was supported by an NSERC grant.
Allan Sly's research was partially supported by NSF grant DMS-1855527, a Simons Investigator Grant and a MacArthur Fellowship. Perla Sousi's research was supported by the
Engineering and Physical Sciences Research Council: EP/R022615/1.

\bibliography{biblio}
\bibliographystyle{abbrv}

\end{document}